\title{Integral monodromy groups of Kloosterman sheaves}
\author{Corentin Perret-Gentil}
\address{ETH Zürich, Department of Mathematics}
\curraddr{Centre de recherches mathématiques, Montréal, Canada}
\email{corentin.perretgentil@gmail.com}
\date{January 2018}
\subjclass[2010]{11L05, 14D05, 20G40}
\begin{document}

\begin{abstract}
  We show that integral monodromy groups of Kloosterman $\ell$-adic sheaves of rank $n\ge 2$ on $\G_m/\F_q$ are as large as possible when the characteristic $\ell$ is large enough, depending only on the rank. This variant of Katz's results over $\C$ was known by works of Gabber, Larsen, Nori and Hall under restrictions such as $\ell$ large enough depending on $\car(\F_q)$ with an ineffective constant, which is unsuitable for applications. We use the theory of finite groups of Lie type to extend Katz's ideas, in particular the classification of maximal subgroups of Aschbacher and Kleidman-Liebeck. These results will apply to study hyper-Kloosterman sums and their reductions in forthcoming work.
\end{abstract}
\maketitle

\tableofcontents

\section{Introduction}

\subsection{Kloosterman sheaves}

For $z\in\C$, we write $e(z)=e^{2i\pi z}$. From now on, $q$ will denote a power of an \emph{odd} prime number, and $\F_q$ the field with $q$ elements

Kloosterman sums
\[\Kl_{2,p}(a)=\frac{-1}{\sqrt{p}}\sum_{x\in\F_p^\times} e \left(\frac{ax+x^{-1}}{p}\right) \ (a\in\F_p^\times)\]
were introduced by H.D. Kloosterman \cite{Kloo27} to study the number of representations of an integer by an integral positive-definite quarternary quadratic form. To achieve this, he proved the bound $|\Kl_{2,p}(a)|\le 2p^{1/4}$ for all $a\in\F_p^\times$, which can be refined to $|\Kl_{2,p}(a)|\le 2$ as a consequence of Weil's 1948 proof of the namesake conjectures in the case of curves over finite fields (see e.g. \cite[Theorem 11.11]{IK04}).

More generally, hyper-Kloosterman sums of rank $n\ge 2$
\begin{equation}
    \label{eq:Kln}
    \Kl_{n,q}(a)=\frac{(-1)^{n-1}}{q^{(n-1)/2}}\sum_{\substack{x_1,\dots,x_n\in\F_q^\times\\x_1\cdots x_n=a}} e \left(\frac{\tr(x_1+\dots+x_n)}{p}\right) \ (a\in\F_q^\times)
  \end{equation}
admit profound links with the spectral theory of automorphic forms through the Kuznetsov formula.

In \cite{Del1}, Deligne completed the proof of the Weil conjectures for algebraic varieties over finite fields, and gave in \cite{Del2} a generalization to weights of $\ell$-adic sheaves on such varieties\footnote{For the background and terminology on trace functions of $\ell$-adic sheaves on curves over finite fields, we refer the reader to \cite{KatzGKM} and \cite[Chapter 7]{KatzESDE}.}. Another deep result is the construction of an $\ell$-adic Fourier transform, corresponding to the discrete Fourier transform at the level of trace functions, with a control of the weights (see for example \cite[Chapters 5, 8]{KatzGKM}). In particular, this gives the following realization of hyper-Kloosterman sums as trace functions of $\ell$-adic sheaves:

\begin{theorem}[{Deligne; see \cite[Theorem 4.1.1]{KatzGKM} or \cite[Exposé 6, Théorème 7.8]{DelEC}}]\label{thm:Kln}
  Let $\ell\neq p$ be an auxiliary prime. For $n\ge 2$ an integer, there exists a middle-extension sheaf\,\footnote{For ease of notation, we will keep implicit the dependency of $\Klc_n$ on $q$ and $\ell$.} $\Klc_n$ of $\overline\Q_\ell$-modules on $\P^1/\F_q$ of rank $n$, corresponding to a continuous $\ell$-adic representation
  \[\rho_n: \pi_{1,q}:=\Gal\left(\F_q(T)^\sep/\F_q(T)\right)\to\GL\left(\Klc_{n,\overline\eta}\right)\cong\GL_n(\overline\Q_\ell),\]
  for $\Klc_{n,\overline\eta}$ the fiber at the generic point. Moreover, $\Klc_n$ is geometrically irreducible, lisse on $\G_m$, pointwise pure of weight $0$, $\Swan_\infty(\Klc_n)=1$, $\Swan_0(\Klc_n)=0$, and for any isomorphism of fields $\iota: \overline\Q_\ell\to\C$, the associated trace function
  \begin{eqnarray*}
    \F_q^\times&\to&\C\\
    a&\mapsto&\iota\circ\tr \left(\rho_n(\Frob_{a,q})\mid \Klc_{n,\overline\eta}\right)
  \end{eqnarray*}
  corresponds to the Kloosterman sum $\Kl_{n,q}$.
\end{theorem}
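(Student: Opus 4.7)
My plan is to construct $\Klc_n$ as an explicit $(n-1)$-fold cohomological pushforward of the Artin--Schreier sheaf and then verify each asserted property in turn. Let $\psi: \F_q \to \overline\Q_\ell^\times$ be the additive character $x \mapsto e(\tr(x)/p)$, and $\mathcal{L}_\psi$ the associated rank-one Artin--Schreier sheaf on $\mathbb{A}^1/\F_q$, pure of weight $0$. Consider the diagram
\[
\G_m \xleftarrow{\mu} \G_m^n \xrightarrow{\sigma} \mathbb{A}^1,
\]
where $\mu(x_1,\dots,x_n) = x_1 \cdots x_n$ and $\sigma(x_1,\dots,x_n) = x_1+\cdots+x_n$. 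I would define
\[
\Klc_n := R^{n-1}\mu_!(\sigma^* \mathcal{L}_\psi)\bigl((n-1)/2\bigr)
\]
on $\G_m$, and then extend to $\P^1$ by middle extension along $\G_m \hookrightarrow \P^1$. The identification of the trace function with $\Kl_{n,q}$ then follows immediately from proper base change and the Grothendieck--Lefschetz trace formula, once one checks that only the top compactly supported cohomology of each fiber contributes.

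For lisse-ness, rank, and purity: the map $\mu$ restricted to $\G_m^n \to \G_m$ is smooth and surjective with fibers isomorphic to $\G_m^{n-1}$, giving lisse-ness on $\G_m$ by generic base change. The rank equals $n$ by a Künneth together with a Grothendieck--Ogg--Shafarevich calculation (each $\G_m$-factor paired with $\mathcal{L}_\psi$ contributes Euler characteristic $-1$), and concentration in top degree follows inductively from the vanishing of $H^0_c(\G_m,\mathcal{L}_\psi)$ and $H^2_c(\G_m,\mathcal{L}_\psi)$. Pointwise purity of weight $0$ is a direct consequence of Deligne's main theorem in Weil~II, since $\sigma^*\mathcal{L}_\psi$ is pure of weight $0$ and the Tate twist cancels the weight shift coming from $R^{n-1}\mu_!$.

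The remaining claims---Swan conductors and geometric irreducibility---I would handle either through the description of $\Klc_n$ as an iterated multiplicative convolution of $\Klc_1=\mathcal{L}_\psi$ with itself, or via Laumon's $\ell$-adic stationary phase theorem. Tameness at $0$ (hence $\Swan_0=0$) is inherited from the tame multiplicative degeneration of the fibers at the origin; at $\infty$, a stationary phase computation shows that the restriction of $\Klc_n$ to the inertia group at $\infty$ is totally wild of dimension $n$ with a single break equal to $1/n$, so that $\Swan_\infty = n \cdot (1/n) = 1$. An inertia representation with a single slope $1/n$ in dimension $n$ is automatically irreducible, which in turn forces geometric irreducibility of $\Klc_n$ as a local system on $\G_m$.

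The principal obstacle is precisely this local analysis at $\infty$: extracting both the numerical invariant $\Swan_\infty=1$ and the structural statement that the unique slope equals $1/n$ with irreducible wild inertia. This is the deepest input required, and either Laumon's stationary phase or a careful inductive Fourier-transform argument in the style of Katz's \emph{Gauss Sums, Kloosterman Sums, and Monodromy Groups} is needed; it is this step that simultaneously yields the Swan conductor values and geometric irreducibility.
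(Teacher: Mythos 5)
The paper does not prove Theorem \ref{thm:Kln}; it is cited as background from Katz's \emph{Gauss Sums, Kloosterman Sums, and Monodromy Groups} (Theorem 4.1.1) and Deligne's SGA $4\frac12$. Your sketch is the standard construction from those references, so the comparison is with the cited sources rather than an argument in this paper.

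Your construction $\Klc_n = R^{n-1}\mu_!(\sigma^*\mathcal L_\psi)((n-1)/2)$, middle-extended from $\G_m$ to $\P^1$, is precisely Deligne's definition, and it is equivalent to Katz's iterated multiplicative $!$-convolution of $\mathcal L_\psi$ with itself. The identification of the trace function, the purity via Weil II, the determination of the local monodromy at $\infty$ (totally wild of rank $n$ with unique break $1/n$, hence $\Swan_\infty=1$), and the deduction of geometric irreducibility from irreducibility of the $I_\infty$-representation are all the correct chain of reasoning and match the references. You are also right that the local analysis at $\infty$ is the genuinely hard input: Katz derives it inductively from his general theory of convolution of local representations, and Laumon's stationary phase gives an alternative route.

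The one step that is too quick is lisse-ness on all of $\G_m$. Smoothness of $\mu$ plus generic base change only gives lisse-ness on a dense open subset of $\G_m$, and the sheaf $\sigma^*\mathcal L_\psi$ is not $\G_m$-equivariant for the obvious torus action, so you cannot conclude directly that all fibers look alike. What Katz actually does is show, inductively via the convolution description (or via the Euler--Poincar\'e formula applied to the fibers), that $R^i\mu_!(\sigma^*\mathcal L_\psi)$ vanishes for $i\neq n-1$ and that the Euler characteristic of the fiber is constantly $(-1)^{n-1}n$ on all of $\G_m$; the combination of concentration in a single degree and constant Euler characteristic then forces the $R^{n-1}$ to be lisse of rank $n$. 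Your ``K\"unneth $+$ Grothendieck--Ogg--Shafarevich'' heuristic for the rank is pointing at the right numerology (each $\mathcal L_\psi$-factor on $\G_m$ has $\chi_c=-1$), but the fibers of $\mu$ carry a twisted, non-product sheaf, so the argument must go through the convolution formalism or a direct computation, not literal K\"unneth. These are gaps in verification rather than a wrong idea; the overall strategy is sound and is indeed the standard one.
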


Notably, the purity claim implies by definition that $|\Kl_{n,q}(a)|\le n\text{ for all }a\in\F_q^\times$, which generalizes Weil's bound for Kloosterman sums of rank 2 to all $n\ge 2$.

\subsubsection{The Deligne-Katz equidistribution theorem}

This construction moreover allows to use powerful tools from algebraic geometry and representation theory to study Kloosterman sums and sums thereof. For example, Katz, building upon Deligne's equidistribution theorem, obtained the following result on the distribution of values of Kloosterman sums:
\begin{theorem}[Vertical Sato-Tate law, {\cite{KatzGKM}}]\label{thm:DeligneKatzST}
  Let $n\ge 2$ be an integer. Let $K$ be a maximal compact subgroup in $\SL_n(\C)$ if $n$ is odd and in $\Sp_n(\C)$ if $n$ is even, and let $\mu$ be the normalized Haar measure on $K$. When $q\to\infty$, the set $\{\Kl_{n,q}(a) : a\in\F_q^\times\}$ becomes equidistributed in $\tr(K)$ with respect to the measure $\tr_*\mu$.
\end{theorem}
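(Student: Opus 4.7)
The plan is to apply Deligne's equidistribution theorem to the lisse $\ell$-adic sheaf $\Klc_n$ furnished by Theorem~\ref{thm:Kln}. Write $G_{\mathrm{geom}}\subseteq G_{\mathrm{arith}}\subseteq\GL_n(\overline\Q_\ell)$ for the Zariski closures of $\rho_n(\pi_1^{\mathrm{geom}})$ and $\rho_n(\pi_{1,q})$, respectively. Pointwise purity of weight~$0$ ensures that the identity component $G_{\mathrm{arith}}^{\circ}$ is reductive, and, after transport via an isomorphism $\iota\colon\overline\Q_\ell\to\C$, one can speak of a maximal compact subgroup~$K$ of the corresponding complex reductive group.

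Granting the equality $G_{\mathrm{geom}}=G_{\mathrm{arith}}$, Deligne's equidistribution theorem in the form of \cite[Chapter~3]{KatzGKM} then implies that as $q\to\infty$ the Frobenius conjugacy classes of $\{\rho_n(\Frob_{a,q}) : a\in\F_q^\times\}$ equidistribute in the space of conjugacy classes of~$K$ with respect to the pushforward of the normalized Haar measure~$\mu$. Applying the continuous trace map and pushing the measure forward would then yield the desired equidistribution of $\{\Kl_{n,q}(a) : a\in\F_q^\times\}$ in $\tr(K)\subset\C$ against $\tr_*\mu$.

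The substantive step is thus identifying $G_{\mathrm{geom}}$ as $\SL_n$ when $n$ is odd and as $\Sp_n$ when $n$ is even. The upper bound is the easier half, and follows from autoduality: for even~$n$, $\Klc_n$ carries a geometrically invariant symplectic form, forcing $G_{\mathrm{geom}}\subseteq\Sp_n$; for odd~$n$, geometric triviality of $\det\Klc_n$ gives $G_{\mathrm{geom}}\subseteq\SL_n$, and no invariant orthogonal form exists. The lower bound, which I expect to be the main obstacle, is Katz's theorem; its proof exploits the local data recorded in Theorem~\ref{thm:Kln}. Tameness at~$0$ combined with the fact that the local monodromy there is a single unipotent Jordan block of size~$n$ yields a regular unipotent element in $G_{\mathrm{geom}}$, while $\Swan_\infty(\Klc_n)=1$ pins down the wild structure at~$\infty$. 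Together with geometric irreducibility, these invariants rule out every proper connected reductive subgroup of $\SL_n$ or $\Sp_n$ acting irreducibly on~$\overline\Q_\ell^n$ and containing a regular unipotent element, by a classification argument on such subgroups.

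Finally, one checks $G_{\mathrm{arith}}=G_{\mathrm{geom}}$: the normalization $(-1)^{n-1}/q^{(n-1)/2}$ built into~\eqref{eq:Kln} makes $\det\rho_n$ arithmetically trivial, and since $\SL_n$ and $\Sp_n$ each coincide with their own derived subgroup there is no room for Frobenius to enlarge the geometric monodromy. Modulo Katz's determination of $G_{\mathrm{geom}}$, the equidistribution then follows as a formal consequence of Deligne's theorem and the pushforward of measures.
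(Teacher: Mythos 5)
Your outline reproduces the standard Katz argument, which is exactly what the paper intends here: the theorem is cited to \cite{KatzGKM} and the surrounding discussion in the paper (Sections 1.1.2 and 3.3) makes clear that the proof rests on (i) Deligne's equidistribution theorem in its ``equal monodromy'' form, and (ii) the monodromy determination $G_\geom=G_\arith=\SL_n$ or $\Sp_n$, itself proved via the semisimplicity coming from purity, the unipotent single-Jordan-block local monodromy at $0$, the Swan conductor $1$ at $\infty$, and Katz's Classification Theorem (stated as Theorem~\ref{thm:KatzClassification} in the paper). Your proposal hits each of these points in the right order, so it is essentially the same route.

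One small imprecision worth fixing: you say pointwise purity of weight $0$ makes $G_{\mathrm{arith}}^{\circ}$ \emph{reductive}. The statement one actually needs, and the one Deligne's result \cite[1.3]{Del2} supplies, is that $G_{\geom}^{\circ}$ is \emph{semisimple}; the paper stresses this. Semisimplicity (rather than mere reductivity) is what allows the classification-by-Lie-algebra step to run: it rules out a central torus in $G_{\geom}^{\circ}$ before one invokes the regular unipotent element and Theorem~\ref{thm:KatzClassification}, whose hypothesis is a simple Lie algebra $\Gc$ with a faithful irreducible representation. Your phrase ``rule out every proper connected \emph{reductive} subgroup'' would, if taken literally, let a torus factor slip through. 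With that wording corrected, the argument is sound and matches the intended proof.
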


\subsubsection{Monodromy groups}

A key ingredient for Theorem \ref{thm:DeligneKatzST} is the determination of the geometric and arithmetic \textit{monodromy groups}
\[G_\geom(\Klc_n)=\iota\overline{\rho_n\big(\pi_{1,q}^\geom\big)}\le G_\arith(\Klc_n)=\iota\overline{\rho_n\big(\pi_{1,q}\big)}\hspace{0.5cm}\le\GL_n(\C)\]
of the Kloosterman sheaf $\Klc_n$, where $\overline{\,\cdot\,}$ denotes Zariski closure and $\pi_{1,q}^\geom=\Gal(\F_q(T)^\sep/\overline\F_q(T))$.\\

Indeed, Deligne's theorem shows that under rather generic conditions (including the equality of $G_\geom$ and $G_\arith$), there is always an equidistribution result in a maximal compact subgroup of the monodromy group (see \cite[Chapters 3, 9]{KatzGKM} and \cite[Theorem 9.2.6, Theorem 9.6.10]{KatzSarnak91}). Katz's result is the following:

\begin{theorem}[{\cite[Chapter 11]{KatzGKM}}]\label{thm:monodromyKl}
  For $n\ge 2$, we have\footnote{Recall that we work in odd characteristic for simplicity. Katz also handles the case $p=2$.}
  \begin{equation}
  \label{eq:Emonodromy}
  G_\geom(\Klc_n)=G_\arith(\Klc_n)=
  \begin{cases}
    \SL_n(\C)&:n\text{ odd}\\
    \Sp_n(\C)&:n\text{ even}.
  \end{cases}
\end{equation}
\end{theorem}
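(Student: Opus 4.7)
The plan is to carry out Katz's argument in two stages: first pin down $G_\geom(\Klc_n)$, then deduce the equality $G_\arith(\Klc_n)=G_\geom(\Klc_n)$. The overall strategy is to sandwich $G_\geom$ between a classical group $H$ read off from local data and a subgroup of $H$ which we force to coincide with $H$ via geometric irreducibility combined with the presence of a regular unipotent element.

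First I would establish the upper bound. Since $\Swan_0(\Klc_n)=0$ and $\Swan_\infty(\Klc_n)=1$, an Euler--Poincaré and local-monodromy computation shows that $\det(\Klc_n)$ is a rank-one sheaf on $\G_m$ with trivial Swan conductors and trivial tame characters at $0$ and $\infty$, hence geometrically trivial; consequently $G_\geom\subseteq\SL_n(\C)$. For even $n$ I would moreover exhibit a non-degenerate alternating $\pi_{1,q}^\geom$-equivariant pairing on $\Klc_{n,\overline\eta}$, arising from the iterated multiplicative convolution/Fourier description of $\Klc_n$ together with the involution $x\mapsto x^{-1}$ of $\G_m$, which shows that $\Klc_n$ is symplectically self-dual and upgrades the containment to $G_\geom\subseteq\Sp_n(\C)$. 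Set $H=\SL_n(\C)$ for $n$ odd and $H=\Sp_n(\C)$ for $n$ even.

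Next I would extract a lower bound from the local monodromy at $0$. Because $\Swan_0=0$, inertia $I_0$ acts through its tame quotient, and an explicit analysis of the canonical extension across $0$ of the iterated convolution of Artin--Schreier sheaves defining $\Klc_n$ shows that a topological generator of tame inertia acts on $\Klc_{n,\overline\eta}$ as a single unipotent Jordan block of size $n$, i.e.\ as a regular unipotent element of $H$. Combined with the geometric irreducibility of $\Klc_n$ (which forces $G_\geom$ to act irreducibly in the standard representation), this means that $G_\geom$ is an irreducible subgroup of $H$ containing a regular unipotent of $H$.

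The decisive step is a classification argument, and is the main obstacle. The existence of a non-trivial unipotent forces $G_\geom^0\neq 1$; irreducibility of the standard representation forces $G_\geom^0$ to be reductive; and since the regular unipotent must lie in $G_\geom^0$, this identity component is in fact semisimple, acts irreducibly in dimension $n$, and contains a regular unipotent of $H$. A case analysis on the simple factors and on the highest weights of irreducible representations of simple Lie algebras (due to Dynkin, and later refined by Saxl--Seitz for subgroups containing a regular unipotent) eliminates every possibility except $G_\geom^0=H$, whence $G_\geom=H$. This classification step is the complex ancestor of the Aschbacher--Kleidman--Liebeck machinery the paper will invoke to handle the integral setting over $\F_\ell$. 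Finally, the same determinant and autoduality arguments applied over $\F_q$ give $G_\arith\subseteq H$, while $G_\arith\supseteq G_\geom=H$ by definition, so $G_\arith=G_\geom=H$.
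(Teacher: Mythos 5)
Your outline correctly follows Katz's overall strategy: upper bound from triviality of $\det$ and the symplectic autoduality, regular unipotent from the tame inertia at $0$, irreducibility from geometric irreducibility of the sheaf, then a classification theorem. Two points deserve comment.

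First, you derive the reductivity/semisimplicity of $G_\geom^0$ from irreducibility and the presence of the regular unipotent. Katz instead invokes Deligne's theorem \cite[1.3]{Del2}: since $\Klc_n$ is pointwise pure of weight $0$, the identity component $G_\geom^0$ is automatically semisimple. The paper emphasizes this as a crucial ingredient, in part because it is precisely what is \emph{unavailable} in the integral and finite-field settings and motivates the machinery developed later. Your alternative route can be made to work (Clifford theory gives semisimplicity of $V\mid_{G_\geom^0}$; a single Jordan block then forces $V\mid_{G_\geom^0}$ irreducible; Schur plus $G_\geom^0\subseteq\SL_n$ kills the connected center), but as written you leap from ``reductive'' to ``semisimple'' without the Schur-lemma step, so you should spell it out.

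Second, and more seriously, the classification step does not eliminate everything but $H$: Katz's Classification Theorem 11.6 (Theorem~\ref{thm:KatzClassification} in the paper) leaves, for $n$ odd, the possibilities $\so_n(\C)$ (for $n\ge 5$) and $\mathfrak{g}_2(\C)$ (for $n=7$) in addition to $\sl_n(\C)$. You assert that the case analysis ``eliminates every possibility except $G_\geom^0=H$,'' but this is not a consequence of the classification alone. Ruling out the orthogonal and $G_2$ cases requires an extra arithmetic input specific to $\Klc_n$: for $n$ odd, $\Klc_n\otimes\Klc_n$ is totally wild at $\infty$ with all breaks $1/n$, hence admits no nonzero $P_\infty$-equivariant bilinear form (Proposition~\ref{prop:propertiesKl}~\ref{item:Klpairingodd}). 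Since $\SO_n$ and $G_2$ in their standard/$7$-dimensional representations preserve a nondegenerate symmetric bilinear form, this is a contradiction. Without this step, the proof is incomplete for odd $n$.
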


Actually, the properties of the Kloosterman sheaves (see Proposition \ref{prop:propertiesKl} \ref{item:KldetTrivial}--\ref{item:Klpairingeven} below) show that \eqref{eq:Emonodromy} holds with equalities replaced by inclusions, so Theorem \ref{thm:monodromyKl} means that the monodromy groups are as large as possible.\\

A crucial ingredient in Katz's proof is the fact that $G_\geom(\Klc_n)^0$ is semisimple (which follows from a result of Deligne \cite[1.3]{Del2}, because the sheaf is pointwise pure of weight $0$).

\subsection{Integral Kloosterman sheaves}\label{subsec:integralKS}

\subsubsection{Kloosterman sums in cyclotomic integers}

Let us observe that the (unnormalized) Kloosterman sums $q^{\frac{n-1}{2}}\Kl_{n,q}$, a priori complex-valued, actually take values in the discrete subring $\Z[\zeta_p]$. This point of view was adopted by Fisher \cite{Fisher95} and Wan \cite{Wan95}.

To handle normalizations, we note that by the evaluation of quadratic Gauss sums
\[\sqrt{p}\in
  \begin{cases}
    \Z[\zeta_p]&: p\equiv 1\pmod{4}\\
    \Z[\zeta_{4p}]&: p\equiv 3\pmod{4},
  \end{cases}
\]
so that $\Kl_{n,q}$ takes values in $\Oc_{p^{(n-1)/2}}$, the localization at $p^{(n-1)/2}$ of the ring of integers $\Oc$ of
\begin{equation}
  \label{eq:fieldDef}  
  \begin{cases}
    \Q(\zeta_p)&n\text{ odd or }p\equiv 1\pmod{4}\\
    \Q(\zeta_{4p})&\text{otherwise}.
  \end{cases}
\end{equation}

Given a prime ideal $\lf\normal\Oc$ above a prime $\ell\neq p$, we can also study the reduction 
\[\Kl_{n,q} \pmod{\lf} : \F_q\to\F_\lf\]
in the residue field $\F_\lf=\Oc_{\sqrt{p}}/\lf\Oc_{\sqrt{p}}\cong\Oc/\lf$.

\subsubsection{Integral definition of the sheaves}

These observations happen to transfer at the level of sheaves. If $\lambda$ is the $\ell$-adic valuation on $\Oc$ corresponding to $\lf$, note that $\sqrt{p}\in\Oc_\lambda^\times$, so $\Kl_{n,q}$ takes values in $\Oc_\lambda$. Then we have:

\begin{theorem}[Deligne]\label{thm:KlnOlambda}
  Let $n\ge 2$ be an integer.
  \begin{itemize}
  \item The Kloosterman sheaf $\Klc_n$ from Theorem \ref{thm:Kln} can be defined as a sheaf of $\Oc_\lambda$-modules on $\P^1/\F_q$, corresponding to a continuous $\ell$-adic representation $\rho_n: \pi_{1,q}\to\GL_n(\Oc_\lambda)$.
  \item By reduction modulo $\lf$ in the residue field $\F_\lambda=\Oc_\lambda/\lf\Oc_\lambda$, we also obtain a sheaf $\widehat\Klc_n$ of $\F_\lambda$-modules corresponding to the representation $\hat\rho_n: \pi_{1,q}\to\GL_n(\F_\lambda)$, with trace function $\Kl_{n,q}\pmod{\lf}$.
  \end{itemize}
\begin{equation*}
  \xymatrix@C=1.8cm@R=0.5cm{
    &\GL_n(\Oc_\lambda)\ar[r]^{\hspace{0.4cm}\tr}\ar[dd]^\pi&\Oc_\lambda\ar[dd]^\pi&\\
    \pi_{1,q}\ar[ur]^{\rho_n}\ar[dr]_{\widehat\rho_n}&&&\ar[ul]_{\Kl_{n,q}}\ar[dl]^{\substack{\Kl_{n,q}\\\pmod{\lf}}}\F_q^\times\\
    &\GL_n(\F_\lambda)\ar[r]^{\hspace{0.4cm}\tr}&\F_\lambda&
  }
\end{equation*}
\end{theorem}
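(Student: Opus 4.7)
The plan is to run through Deligne's explicit geometric construction of $\Klc_n$ with $\Oc_\lambda$-coefficients throughout, then derive the mod-$\lf$ reduction automatically. First, fix an embedding $\Q(\zeta_p)\hookrightarrow\overline\Q_\ell$ compatible with $\lf$, so that a nontrivial additive character $\psi\colon\F_p\to\mu_p(\Oc_\lambda)\subset\Oc_\lambda^\times$ is defined (using only $\ell\neq p$). The associated Artin--Schreier sheaf $\mathcal{L}_\psi$ is tautologically an étale $\Oc_\lambda$-sheaf of rank one on $\mathbb{A}^1/\F_q$, being the push-out of the Lang $\Z/p\Z$-torsor along $\psi$. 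Pulling back by the sum map $s\colon(x_1,\ldots,x_n)\mapsto x_1+\cdots+x_n$ on $\G_m^n$ and taking $R\pi_!$ along the product map $\pi\colon\G_m^n\to\G_m$, with the half-integer Tate twist normalised by a chosen square root of $q\in\Oc_\lambda^\times$, yields the candidate integral sheaf
\[
\Klc_n\;=\;R^{n-1}\pi_!\bigl(s^*\mathcal{L}_\psi\bigr)\bigl(\tfrac{n-1}{2}\bigr).
\]
The functor $R\pi_!$ is defined for $\Oc_\lambda$-sheaves and commutes with the base change $\Oc_\lambda\to\F_\lambda$, so once this object is proved to be lisse on $\G_m$ and $\Oc_\lambda$-free of rank $n$ at the generic stalk, $\rho_n$ will land in $\GL_n(\Oc_\lambda)$ and $\hat\rho_n$ will be obtained by reduction modulo $\lf$.

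The key point to verify, and the step I expect to absorb most of the work, is that $R^i\pi_!$ vanishes for $i\neq n-1$ and that $R^{n-1}\pi_!$ is locally free of rank $n$ over $\Oc_\lambda$ on $\G_m$. The vanishing and the generic rank are already known over $\overline\Q_\ell$ by Theorem~\ref{thm:Kln}, and propagate to $\Oc_\lambda$ by induction on $n$ (writing $\Klc_n$ as the multiplicative $!$-convolution $\Klc_{n-1}\ast\Klc_1$), combined with proper/smooth base change and the Künneth formula. Torsion-freeness is more delicate: a priori, reduction modulo $\lf$ could produce unwanted $\Oc_\lambda$-torsion in $R^{n-1}\pi_!$ that vanishes after inverting $\ell$. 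One rules this out by comparing the Euler--Poincaré characteristic, which is computed integrally and is preserved by reduction modulo $\lf$, with the $\overline\Q_\ell$-rank $n$ from Theorem~\ref{thm:Kln}: any such torsion would force a non-zero contribution to the neighbouring $R^{n-2}\pi_!\otimes\F_\lambda$, which however vanishes by the induction hypothesis, a contradiction.

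Once this integral model is in place, the stalk at the geometric generic point $\overline\eta$ is free of rank $n$ over $\Oc_\lambda$, the monodromy action gives $\rho_n\colon\pi_{1,q}\to\GL_n(\Oc_\lambda)$ by functoriality, and $\widehat\Klc_n:=\Klc_n\otimes_{\Oc_\lambda}\F_\lambda$ is a lisse $\F_\lambda$-sheaf of rank $n$ on $\G_m$ whose associated representation is $\hat\rho_n=\rho_n\bmod\lf$. The trace-function identity $\tr\hat\rho_n(\Frob_{a,q})\equiv\Kl_{n,q}(a)\pmod{\lf}$ for $a\in\F_q^\times$ is then immediate because matrix trace commutes with the quotient $\Oc_\lambda\twoheadrightarrow\F_\lambda$, and the Grothendieck--Lefschetz trace formula remains valid with $\Oc_\lambda$-coefficients on the model just constructed, identifying both sides with the reduction modulo $\lf$ of the value $\Kl_{n,q}(a)$ produced by Theorem~\ref{thm:Kln}.
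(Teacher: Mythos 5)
Your proposal is correct but follows a genuinely different route from the paper. The paper's proof is essentially a one-line citation: the Kloosterman sheaf is constructed in \cite[Theorem 4.1.1]{KatzGKM} by iterated $\ell$-adic Fourier transform, and \cite[Chapter 5]{KatzGKM} shows that the Fourier-transform functor is already defined at the level of $\Oc_\lambda$-sheaves, so the integral model and its mod-$\lf$ reduction descend formally from Katz's machinery. You instead rebuild the integral sheaf from scratch in the style of Deligne's original construction in SGA~4.5, as $R^{n-1}\pi_!\bigl(s^*\mathcal{L}_\psi\bigr)\bigl(\tfrac{n-1}{2}\bigr)$, and verify by hand that this $\Oc_\lambda$-complex is concentrated in degree $n-1$ and free of rank $n$. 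The two constructions yield the same sheaf. What the paper's route buys is brevity, by outsourcing all cohomological bookkeeping to Katz's integral Fourier transform; what yours buys is a self-contained explicit geometric model, at the cost of redoing that bookkeeping.

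One remark on the step you correctly isolate as the crux (torsion-freeness). The clean statement is that the vanishing of $R^i\pi_!\bigl(s^*\mathcal{L}_\psi\otimes\F_\lambda\bigr)$ for $i\neq n-1$ is a geometric fact, valid with coefficients in any ring of residue characteristic $\neq p$ (Artin vanishing for degrees $>n-1$; purity/weight or your convolution induction for degrees $<n-1$), and is not really an induction hypothesis about $\Klc_{n-1}$ over $\Oc_\lambda$. Once one has this over $\F_\lambda$, the long exact sequence attached to $0\to\Oc_\lambda\xrightarrow{\varpi}\Oc_\lambda\to\F_\lambda\to 0$ shows that $\varpi$ acts injectively on $R^{n-1}\pi_!$ with $\Oc_\lambda$-coefficients, hence torsion-freeness, and Grothendieck--Ogg--Shafarevich pins the rank at $n$. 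Your phrase ``non-zero contribution to $R^{n-2}\pi_!\otimes\F_\lambda$'' mildly conflates $R^{n-2}\pi_!(\cdot)\otimes\F_\lambda$ with $R^{n-2}\pi_!(\cdot\otimes\F_\lambda)$, which differ by a $\mathrm{Tor}_1$ term; it is the latter that vanishes and does the work. This is a wording issue rather than a gap, but precision matters at exactly this point.
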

\begin{proof}
  This recursively follows from the fact that the $\ell$-adic Fourier transform can itself be defined at the level of $\Oc_\lambda$-modules, see \cite[Theorem 4.1.1, Chapter 5]{KatzGKM}.
\end{proof}

\subsubsection{Monodromy groups}
Our original motivation is to use the $\ell$-adic formalism to study the distribution of Kloosterman sums in cyclotomic integers and their reductions.

A first important step in this direction would be to determine the integral monodromy groups
\[G_\geom(\Klc_n)=\rho_n\big(\pi_{1,q}^\geom\big)\le G_\arith(\Klc_n)=\rho_n(\pi_{1,q})\le\GL_n(\Oc_\lambda)\]
and/or their reductions $G_\geom(\widehat \Klc_n)\le G_\arith(\widehat \Klc_n)\le \GL_n(\F_\lambda)$.

By Proposition \ref{prop:propertiesKl} \ref{item:KldetTrivial}--\ref{item:Klpairingeven} below, we still have
\[G_\geom(\Klc_n)\le G_\arith(\Klc_n)\le
\begin{cases}
  \SL_n(\Oc_{\lambda})&:n\text{ odd}\\
  \Sp_n(\Oc_{\lambda})&:n\text{ even}.
\end{cases}
\]
As Katz notes in the introduction of \cite{KatzGKM}, it is an interesting question to ask whether these integral monodromy groups are still equal and as large as possible, knowing that their Zariski closure in $\GL_n(\overline\Q_\ell)$ is $\SL_n(\overline\Q_\ell)$ (resp. $\Sp_n(\overline\Q_\ell)$) by Theorem \ref{thm:monodromyKl}.

The determination of integral and finite monodromy groups is usually more difficult than that of the monodromy groups over $\C$, since we consider simply subgroups of $\GL_n(\Oc_\lambda)$ or $\GL_n(\F_\lambda)$, instead of algebraic subgroups $G$ of $\GL_n(\C)$ with $G^0$ semisimple as before, and the structure of the former is much more complicated.\\

Our result is the following:

\begin{notation}
  We write $\ell\gg_n 1$ (resp. $\ell\ll_n 1$) for the condition that $\ell$ be larger (resp. smaller) than some constant depending only on $n$.
\end{notation}

\begin{theorem}\label{thm:KlcIntegralMono}
  Let $n\ge 2$ be coprime to $p$. For $\ell\gg_n 1$ with $\ell\equiv 1\pmod{4}$ and $\lambda$ an $\ell$-adic valuation on $\Oc=\Z[\zeta_{4p}]$ with $([\F_\lambda:\F_\ell], n)=1$, the Kloosterman sheaf $\Klc_n$ of $\Oc_\lambda$-modules over $\P^1/\F_q$ defined in Section \ref{subsec:integralKS} satisfies
  \begin{equation}
    \label{eq:integralMonodromy}
    G_\geom(\Klc_n)=G_\arith(\Klc_n)=
    \begin{cases}
      \SL_n(\Oc_{\lambda})&:n\text{ odd}\\
      \Sp_n(\Oc_{\lambda})&:n\text{ even}.
    \end{cases}
  \end{equation}
  In particular,
    \begin{equation}
    \label{eq:Flmonodromy}
  G_\geom(\widehat\Klc_n)=G_\arith(\widehat\Klc_n)=
  \begin{cases}
    \SL_n(\F_{\lambda})&:n\text{ odd}\\
    \Sp_n(\F_{\lambda})&:n\text{ even}.
  \end{cases}
\end{equation}
The same results hold true without restriction on $\ell\pmod{4}$ if $p\equiv 1\pmod{4}$ or $n$ is odd, with $\Oc=\Z[\zeta_p]$ \textup{(}see \eqref{eq:fieldDef}\textup{)}. 
\end{theorem}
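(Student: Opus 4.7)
The plan is a two-stage argument: first prove the mod-$\lf$ statement \eqref{eq:Flmonodromy}, and then lift it to the integral statement \eqref{eq:integralMonodromy}. For the lift, write $\Gamma$ for $\SL_n(\Oc_\lambda)$ or $\Sp_n(\Oc_\lambda)$ and $\widehat\Gamma$ for its mod-$\lf$ reduction. The first congruence kernel $\ker(\Gamma\to\widehat\Gamma)$ is pro-$\ell$, with graded pieces isomorphic (as $\widehat\Gamma$-modules) to the adjoint Lie algebra over $\F_\lambda$; for $\ell\gg_n 1$ this is an absolutely irreducible $\F_\lambda[\widehat\Gamma]$-module. Consequently, any closed subgroup $H\le\Gamma$ that surjects onto $\widehat\Gamma$ and meets the first congruence layer nontrivially must equal $\Gamma$. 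Since $G_\geom(\Klc_n)$ is Zariski-dense in the ambient classical group by Theorem~\ref{thm:monodromyKl}, the required congruence element is supplied by a suitable commutator of lifts, completing the lift.

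Next, I would reduce from arithmetic to geometric monodromy at the mod-$\lf$ level. The quotient $G_\arith(\widehat\Klc_n)/G_\geom(\widehat\Klc_n)$ is topologically cyclic, generated by Frobenius. Once $G_\geom(\widehat\Klc_n)=\widehat\Gamma$ is established, the arithmetic group is contained in the normalizer of $\widehat\Gamma$ in $\GL_n(\F_\lambda)$, which is $\widehat\Gamma$ extended by central scalars. The trivial-determinant and self-duality properties of $\Klc_n$ (Proposition~\ref{prop:propertiesKl}) then pin the arithmetic monodromy to $\widehat\Gamma$. The hypothesis $\ell\equiv 1\pmod 4$ intervenes precisely here: it ensures that the quadratic Gauss sum $\sqrt{p}$ appearing in the normalization of $\Klc_n$ is represented in $\F_\lambda$, so no outer scalar is introduced; the condition becomes unnecessary when $p\equiv 1\pmod 4$ or $n$ is odd, matching the statement.

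The core of the argument is to show $H:=G_\geom(\widehat\Klc_n)=\widehat\Gamma$ via the Aschbacher/Kleidman--Liebeck classification of maximal subgroups of finite classical groups, ruling out each class using Katz's description of the local inertia representations of $\Klc_n$. Geometric irreducibility excludes the reducible class $\mathcal{C}_1$. The tame inertia at $0$ acts as a single unipotent Jordan block of size $n$, incompatible with the imprimitive and tensor-induced block structures of $\mathcal{C}_2$ and $\mathcal{C}_7$. The condition $\Swan_\infty(\Klc_n)=1$ forces the wild inertia representation at $\infty$ to be indecomposable under tensor product (a nontrivial decomposition would make $\Swan_\infty$ a sum with coefficients equal to factor dimensions, which cannot equal $1$), excluding $\mathcal{C}_4$. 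The coprimality hypothesis $([\F_\lambda:\F_\ell],n)=1$ together with trace-field analysis rules out the field-extension and subfield classes $\mathcal{C}_3$ and $\mathcal{C}_5$. The extraspecial normalizer class $\mathcal{C}_6$ has order bounded in terms of $n$, so it is excluded for $\ell\gg_n 1$ once any large-order element is exhibited in the inertia image. Finally, the parity of the self-duality of $\Klc_n$ agrees with $\widehat\Gamma$ but not with the smaller classical subgroups populating $\mathcal{C}_8$.

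The main obstacle, and where the paper advances beyond Katz's characteristic-zero analysis, is the elimination of the almost simple class $\mathcal{S}$. The plan is to invoke the Guralnick--Tiep (and Larsen) classification of quasi-simple groups admitting an irreducible $n$-dimensional cross-characteristic representation, which for each fixed $n$ yields a finite explicit list of candidates (alternating groups, certain sporadic quotients, and low-rank Lie-type groups in cross characteristic). For each candidate $S$, one combines the element orders available in $S$, the trace structure of the representation, and the presence of a principal unipotent of order a power of $p$ dictated by the local inertia of $\Klc_n$ to derive a contradiction with the known structure of $S$; for $\ell$ sufficiently large in terms of $n$ only, no $S$ on the list can accommodate these constraints. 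This uniform-in-$p$ exclusion is precisely what allows the bound on $\ell$ to depend solely on $n$, removing the ineffectiveness of earlier approaches based on Nori's theorem.
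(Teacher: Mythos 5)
Your overall architecture matches the paper: lift $\F_\lambda$-monodromy to $\Oc_\lambda$-monodromy by a Serre-type argument, then show $G_\geom(\widehat\Klc_n)$ is all of $\SL_n(\F_\lambda)$ (resp.\ $\Sp_n(\F_\lambda)$) by running through the Aschbacher--Kleidman--Liebeck classification of maximal subgroups and excluding each class with local inertia data. That is indeed what the paper does. But there is a genuine gap in the step you label as the core of the argument.

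Your treatment of the almost simple class $\mathcal{S}$ is incomplete in a way that would make the proof fail. You propose to invoke the Guralnick--Tiep--Larsen classification of quasi-simple groups with an $n$-dimensional irreducible \emph{cross-characteristic} representation, and describe the resulting list as alternating groups, sporadics, and low-rank Lie type in cross characteristic. But the class $\mathcal{S}$ also contains groups of Lie type in the \emph{defining} characteristic $\ell$ (for instance, symmetric-power embeddings of $\SL_2(\F_\lambda)$, the $7$-dimensional representation of $G_2(\F_\lambda)$, and subgroups like $\SU_n(\F')$), and these have order growing with $\ell$, so they cannot be dispatched by an order bound or any ``$\ell\gg_n 1$'' argument of the kind you use for the cross-characteristic cases. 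This is precisely where the paper has to work hardest: it first uses a Larsen--Pink theorem (or CFSG together with Landazuri--Seitz and Wagner) to reduce to defining characteristic, then uses Liebeck's bounds on the minimal dimension $m(S)$ to control the rank and the degree of the defining field, concludes that $\Out(S)$ is small so the regular unipotent actually lies in $S$, and finally lifts to an algebraic-group representation via the Seitz--Testerman theorem so that Suprunenko's classification (of irreducible representations of simple algebraic groups whose image contains a regular unipotent) applies. Without this chain the defining-characteristic candidates remain unhandled, and the whole classification theorem --- and hence Theorem~\ref{thm:KlcIntegralMono} --- does not follow.

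Two smaller but real problems. The unipotent element coming from tame inertia at $0$ has order a power of $\ell$ (the residue characteristic of $\F_\lambda$), not of $p$; the paper's Lemma~\ref{lemma:elementOrderell} shows it has order exactly $\ell$ once $\ell\gg_n 1$, and this is what drives most of the exclusions. Your Swan-conductor argument for the tensor-product class is also not sound as stated: $\Swan_\infty$ is not additive or multiplicative over tensor factors in a way that immediately forbids $\Swan_\infty=1$ for a nontrivial tensor decomposition. The paper instead observes that a tensor product of two nontrivial Jordan blocks in $\GL_{n_1}\otimes\GL_{n_2}$ necessarily has at least two Jordan blocks, which contradicts the single-Jordan-block property from inertia at $0$; you should use that argument instead.
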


\begin{remark}\label{rem:indexCylFieldRam}
  In particular, the results hold for all $\lambda$ of degree $1$ above a prime $\ell\gg_n 1$ (a set of natural density $1$). In general, note that $([\F_\lambda:\F_\ell], n)=\left(\ord(\ell\in\F_p^\times),n\right)\mid(p-1,n)$ (see \cite[Theorem 2.13]{Was97}).
\end{remark}

\begin{acknowledgements}
  The author would like to thank his supervisor Emmanuel Kowalski for guidance and advice during this project, Richard Pink for mentioning his results with Michael Larsen in \cite{LarsPink11}, and the referees for helpful comments. It is a pleasure to acknowledge the importance of the works we are building upon. This work was partially supported by DFG-SNF lead agency program grant 200021L\_153647, and the results also appear in the author's PhD thesis \cite{PG16}.
\end{acknowledgements}

\section{Existing results and heuristics}

\subsection{The results of Gabber and Nori}
In \cite[Chapter 12]{KatzGKM}, Katz presents the proof of the following result of Gabber:
\begin{theorem}[Gabber]
  If $\Z[\zeta_{4p}]_\lambda=\Z_\ell$, then \eqref{eq:integralMonodromy} holds if $\ell\gg_{n,p} 1$.
\end{theorem}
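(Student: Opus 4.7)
The plan is to combine Katz's theorem over $\overline\Q_\ell$ (Theorem \ref{thm:monodromyKl}) with two now-standard $\ell$-adic techniques: a classification argument modulo $\ell$, and a lifting lemma from $\F_\ell$ to $\Z_\ell$. Write $G$ for $\SL_n$ when $n$ is odd and $\Sp_n$ when $n$ is even. Katz's result says that $G_\geom(\Klc_n)$, a closed subgroup of $G(\Z_\ell)$ by Proposition \ref{prop:propertiesKl}, is Zariski-dense in $G_{\overline\Q_\ell}$; the goal is to upgrade density to the equality $G_\geom(\Klc_n) = G(\Z_\ell)$, after which the inclusions $G_\geom \subseteq G_\arith \subseteq G(\Z_\ell)$ force $G_\arith = G(\Z_\ell)$ as well.

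For the mod-$\ell$ step, the essential input is the local monodromy of $\Klc_n$ at $0$: the tame inertia acts through a single unipotent Jordan block of size $n$, so for $\ell > n$ the mod-$\ell$ image $\overline{G_\geom} \leq G(\F_\ell)$ contains a regular unipotent element $\bar u$ of order $\ell$. The approach is to apply Nori's theorem on subgroups of $\GL_n(\F_\ell)$ generated by their elements of order $\ell$: for $\ell$ larger than a constant depending on $n$, Nori attaches to such a subgroup a canonical algebraic envelope in $\GL_{n,\overline\F_\ell}$ that is compatible with Zariski closures under reduction from characteristic zero. Katz's density then forces the envelope of the order-$\ell$-generated part of $\overline{G_\geom}$ to be all of $G_{\overline\F_\ell}$, and the presence of the regular unipotent $\bar u$ (together with the irreducibility of $\overline{G_\geom}$) rules out the residual possibilities, giving $\overline{G_\geom} = G(\F_\ell)$ once $\ell \gg_{n,p} 1$.

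For the lifting step, $G_\geom$ is a closed $\ell$-adic Lie subgroup of $G(\Z_\ell)$ whose Zariski closure in $G_{\overline\Q_\ell}$ is $G$, so (in characteristic zero, with $G$ semisimple) its Lie algebra is the full $\mathfrak g_{\Q_\ell}$; this forces $G_\geom$ to be open in $G(\Z_\ell)$ and hence to contain some congruence subgroup $K_N = \ker(G(\Z_\ell) \to G(\Z/\ell^N))$. Combined with mod-$\ell$ surjectivity, an induction downwards on $N$ using the absolute irreducibility of the adjoint representation of $G(\F_\ell)$ on $K_i/K_{i+1} \cong \mathfrak g(\F_\ell)$ (valid for $\ell \gg n$) propagates the containment to $G_\geom \supseteq K_1$, whence $G_\geom = G(\Z_\ell)$.

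The main obstacle I anticipate is the mod-$\ell$ step: extracting from Nori's theorem not merely a subgroup of bounded index but $G(\F_\ell)$ on the nose, with an effective threshold on $\ell$. Gabber's original argument incurs dependence on $p$ precisely here (through the characteristic-zero-to-mod-$\ell$ comparison packaged into Nori's theorem); the main theorem of this paper removes that dependence by substituting the Aschbacher--Kleidman--Liebeck classification of maximal subgroups of classical groups for Nori's structure theorem, and exploiting the regular unipotent from the local monodromy at $0$ directly to rule out each family of maximal subgroups in turn.
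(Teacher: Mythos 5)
The paper does not prove this theorem: it is attributed to Gabber and cited to \cite[Chapter 12]{KatzGKM}, with the paper only noting that the implicit constant depends on $p$ ineffectively. So there is no proof in the paper to compare against, only the citation and the surrounding discussion (including the remark that Nori \cite{Nori87} proved a similar result with the same limitation). I will therefore assess your sketch on its own terms.

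Your two-step structure (mod-$\ell$ surjectivity, then lifting along the congruence filtration) is the right shape, and your lifting step is essentially Serre's argument, which the paper itself invokes in its Lemma in Section 3.1; the openness claim follows from compactness plus Zariski density plus simplicity of $\mathfrak{sl}_n$ or $\mathfrak{sp}_n$, and the Frattini-type induction on $K_i/K_{i+1}\cong\mathfrak g(\F_\ell)$ is correct for $\ell>n$. The problem is the mod-$\ell$ step. Zariski density of $G_\geom$ in $G_{\overline\Q_\ell}$ by itself gives \emph{no} control on the reduction mod $\ell$: the principal congruence subgroup $\ker(G(\Z_\ell)\to G(\F_\ell))$ is Zariski dense with trivial image. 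Nori's envelope theorem is a statement entirely internal to $\GL_n(\F_\ell)$ and does not, by itself, furnish the ``compatibility with Zariski closures under reduction from characteristic zero'' you invoke. What does bridge the characteristics in Gabber's and Nori's arguments is a spreading-out/constructibility argument on a group scheme over a global base (or equivalently a compatible-systems argument ranging over all $\lambda$), and this is precisely the step whose effectivity depends on $p$; your sketch gestures at this dependence but does not actually locate it, because you have not written down any argument that uses $p$.

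There is also an internal inconsistency worth flagging. If one grants the Nori envelope $\tilde H$ with $\tilde H(\F_\ell)^+ = \overline{G_\geom}{}^+$, then the presence of the regular unipotent $\bar u\in\tilde H(\F_\ell)$ (of order $\ell$, hence in $\tilde H(\F_\ell)^+$) plus irreducibility lets one apply Suprunenko's classification \cite[Theorem (1.9)]{Sup95} directly to $\tilde H$ over $\overline\F_\ell$, and then one rules out $\SO_n$ and $G_2$ by Kloosterman-specific local data at $\infty$ exactly as this paper does. But that argument uses no characteristic-zero input at all, and gives a threshold $\ell\gg_n 1$ \emph{independent} of $p$ --- i.e. it proves (a special case of) the paper's Theorem~\ref{thm:KlcIntegralMono}, not Gabber's weaker statement. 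So the mod-$\ell$ argument you describe, if it worked, is the paper's argument, not Gabber's, and your description therefore does not reflect the actual mechanism behind the theorem you were asked to prove. To reconstruct Gabber's proof you would instead need to articulate the constructibility step: that the scheme-theoretic closure of the monodromy over a suitable global base has semisimple fibre over $\F_\ell$ for all but finitely many $\ell$, with ``finitely many'' controlled (ineffectively) by noetherianity of the base, and that this is where $p$ enters.
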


Unfortunately, the implicit constant depends on $p$ in an ineffective way. A similar result is shown by Nori \cite{Nori87}, with the same limitations.

For the applications in analytic number theory that we would like to consider, however, it would be necessary that \eqref{eq:integralMonodromy} holds for all $\ell$ large enough, independently from $p$.
\subsection{Consequences of the works of Larsen-Pink}
By results of Larsen and Pink (see \cite[Theorem 3.17]{Lars95} and the applications in \cite[Section 7]{Katz12}, \cite[pp. 155--156]{KowLargeSieve08}, \cite[p. 29]{KowLS06} and \cite[p. 7]{KowWeilNumbers06}), the monodromy result of Katz over $\C$ (Theorem \ref{thm:monodromyKl}) implies that for every $p$, there exists a set $\Lambda(n,p)$ of primes of natural density $1$ such that for all $\ell\in\Lambda(n,p)$, the result \eqref{eq:Flmonodromy} holds. Indeed, for fixed $n, q$ and varying $\lambda$, Kloosterman sheaves form a compatible system (see \cite[8.9]{KatzGKM}).

However, as for the results of Gabber and Nori, the way $\Lambda(n,p)$ is constructed is highly dependent on $p$. This is not a problem for the applications of Kowalski mentioned above, but issues arise if we need to take $\ell,p\to\infty$ with some restrictions on the range as in \cite{KowRank06}.

\subsection{(Invariant) generation of $\SL_n(\F_\ell)$}

In Section \ref{sec:localMonodromy}, we will see that $G_\geom(\Klc_{n})$ contains elements conjugate to
\begin{equation}
  \label{eq:mu}
  m=\left(
  \begin{matrix}
    0&0&\dots&0&(-1)^{n+1}\\
    1&0&\dots&0&0\\
    0&1&\dots&0&0\\
    \vdots&  &\ddots&\vdots&\vdots \\
    0&  &     &1&0
  \end{matrix}
\right)\text{ and }u=\left(
  \begin{matrix}
    1&1&&&\\
    &1&1&&\\
    &&1&\ddots&\\
    &&&\ddots&1\\
    &&&&1
  \end{matrix}
\right).
\end{equation}
As an indication that the monodromy group for $n$ odd should be $\SL_n(\F_\lambda)$, we have the following:
\begin{proposition}
  For $n$ odd, the elements $u$ and $m$ in \eqref{eq:mu} generate $\SL_n(\F_\ell)$.
\end{proposition}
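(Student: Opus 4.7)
The plan is to argue, via the Aschbacher--Kleidman--Liebeck classification of maximal subgroups of $\SL_n(\F_\ell)$, that $H := \langle u,m\rangle$ cannot lie inside any proper maximal subgroup.

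First I would verify that $H$ acts absolutely irreducibly on $V = \F_\ell^n$. The element $u$ is regular unipotent with $\ker(u-I) = \F_\ell\cdot e_1$, so any non-zero $H$-stable subspace $W$ is $u$-stable and therefore contains $e_1$. For $n$ odd one has $(-1)^{n+1}=1$, so $m$ is the cyclic shift $e_i\mapsto e_{i+1\bmod n}$; $H$-stability of $W$ then forces $W\supseteq\{e_1,\dots,e_n\}$, i.e.\ $W=V$.

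Next I would eliminate each of the geometric Aschbacher classes $\mathcal{C}_i$. The imprimitivity classes $\mathcal{C}_2$ and $\mathcal{C}_7$ are incompatible with $u$ being regular unipotent, whose invariant subspaces form a single chain and whose Jordan type is a single block of size $n$. The field-extension class $\mathcal{C}_3$ is excluded because the centralizer of $u$ in $\mathrm{M}_n(\F_\ell)$ is isomorphic to $\F_\ell[T]/(T^n)$, a local ring with no subfield strictly larger than $\F_\ell$, so no $\F_{\ell^r}$-structure with $r>1$ commutes with $u$. The tensor product class $\mathcal{C}_4$ is ruled out since no tensor product of smaller unipotents has a single Jordan block of size $n$. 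The subfield class $\mathcal{C}_5$ is vacuous as $\F_\ell$ is prime. The extraspecial normalizer class $\mathcal{C}_6$ only arises for $n$ a prime power, and for $\ell\gg_n 1$ any element of order $\ell$ in such a normalizer is forced to be central, contradicting that $u$ is non-scalar. For the classical class $\mathcal{C}_8$, I would show by direct computation that no non-zero bilinear form on $V$ is jointly $u$- and $m$-invariant: $m$-invariance forces $B(e_i,e_j) = c_{j-i\bmod n}$, and applying $u$-invariance to the pairs $(e_1,e_j)$ and $(e_i,e_1)$ (using $ue_1=e_1$) yields $c_k+c_{k-1}=c_k$, hence $c_k=0$ for every $k\in\Z/n$.

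The last case, and the main obstacle, is the almost simple class $\mathcal{S}$. Here I would invoke the classification of pairs $(G,V)$ where $G$ is a quasisimple group acting irreducibly on an $n$-dimensional module $V$ and containing a regular unipotent element of $\GL(V)$ (due to Suprunenko, Testerman, Saxl--Seitz, Liebeck--Seitz and others): for $\ell\gg_n 1$ the possibilities reduce, once the form-preserving cases are eliminated by the previous step, to $G\supseteq\SL(V)$. This forces $H=\SL_n(\F_\ell)$, as desired.
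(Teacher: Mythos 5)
Your proposal is a valid strategy in outline, but it is a genuinely different route from the paper's, and as written it proves a strictly weaker statement. The paper's proof is elementary and constructive in the spirit of \cite{GowTamb92}: one checks that $w=m^2u(mum)^{-1}$ is (essentially) an elementary transvection, conjugates it by powers of the cycle $m$ to produce a full set of elementary matrices, and inducts on $n$; this works for every prime $\ell$ and uses none of the classification machinery. Your proof instead runs the Aschbacher--Kleidman--Liebeck argument, i.e.\ precisely the engine the paper builds in Section 6 to prove the much stronger invariant statement (Theorem \ref{thm:classification}). The first concrete consequence is that your argument only yields generation for $\ell\gg_n 1$: the exclusion of $\Cc_2$ (the permutation induced on the blocks has $\ell$-power order, so one needs $\ell>n$), of the extraspecial normalizers, and above all of the almost simple class all require $\ell$ large, whereas the proposition carries no such restriction. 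Note also that, even completed, your proof would not upgrade the proposition to \emph{invariant} generation, since the irreducibility and bilinear-form computations use the specific matrices $u$ and $m$; that upgrade is exactly what the remainder of the paper supplies, so quoting it here would make the ``indication'' circular.

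Two steps need repair even for $\ell\gg_n1$. In the classical class, the maximal subgroup is the \emph{normalizer} $N_{\SL_n(\F_\ell)}(\SO_n(\F_\ell))\cong\SO_n(\F_\ell)\times\mu_n(\F_\ell)$, so membership there only forces $m$ to preserve the form up to a scalar: your recursion $B(e_{i+1},e_{j+1})=B(e_i,e_j)$ must be replaced by $B(e_{i+1},e_{j+1})=\zeta^2B(e_i,e_j)$ for some $\zeta\in\mu_n(\F_\ell)$. The conclusion $B=0$ does survive, because for $\ell>n$ the unipotent $u$ has trivial $\mu_n$-component and hence preserves $B$ exactly, and the relations $B(e_1,e_{k})=B(e_{k},e_1)=0$ for $k\le n-1$ propagate around the cycle; but the computation as written is incomplete. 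More seriously, the almost simple class cannot be dispatched in one sentence: one needs the reduction to groups of Lie type in characteristic $\ell$ (via \cite{LarsPink11}, or CFSG together with Wagner and Landazuri--Seitz), a bound on $|\Out(S)|$ to force the regular unipotent element into the quasisimple normal subgroup $T$ itself rather than merely its normalizer, and the Seitz--Testerman lifting \cite{SeiTes90} to descend Suprunenko's theorem \cite{Sup95} from $\overline{\F}_\ell$ to $\F_\ell$. Moreover, the elimination of the form-preserving candidates in class $\mathcal S$ comes from the definition of that class in Theorem \ref{thm:classifMaxSubgroups}\ref{item:classifMaxSubgroups2} (the quasisimple subgroup preserves no nondegenerate bilinear or unitary form), not from your $\Cc_8$ computation, which concerns the group generated by $u$ and $m$ rather than $T$.
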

\begin{proof}
  This can be obtained by proceeding in a way similar to \cite{GowTamb92}, considering the element $w=m^2u(mum)^{-1}$ and using induction on $n$.
\end{proof}

However, we do not know whether these elements are \textit{invariant generators}, namely whether any two conjugates are still generators. Without that, we may not conclude anything concerning Theorem \ref{thm:KlcIntegralMono}.

\subsection{The case $n=2$ and $\F_\lambda=\F_\ell$}

Hall \cite{Hall08} proved a classification theorem that generalizes a theorem of Yu on the $\F_\ell$-monodromy of hyperelliptic curves, and also applies to show big monodromy results for families of twists of elliptic curves, as needed in \cite{KowRank06}.

A particular case is the following, deduced from the classification of linear groups generated by transvections by Zalesski and Serezkin (a well-known result of Dickson when $n=2$):
\begin{theorem}[{\cite[Theorem 1.1]{Hall08}}]\label{thm:Hall1.1}
  Let $V$ be a $\F_\ell$-vector space with a perfect alternating pairing $V\times V\to\F_\ell$, and let $H\le\GL(V)$ be an irreducible primitive subgroup that preserves the pairing. If $H$ contains a transvection and $\ell\ge 3$, then $H=\Sp(V)$.
\end{theorem}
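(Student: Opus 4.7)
The plan is to invoke the classification of irreducible linear groups generated by transvections due to Zalesski and Serezkin (which specializes to Dickson's theorem when $\dim V=2$), using the primitivity of $H$ to reduce to the irreducible case for the normal closure of transvections inside $H$.

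First, let $N\trianglelefteq H$ be the normal subgroup generated by all transvections contained in $H$; by hypothesis $N\ne 1$. Since $H$ preserves the form $\omega$, every transvection $\tau\in H$ is symplectic, i.e.\ of the form $x\mapsto x+\lambda\,\omega(v,x)\,v$ for some $v\in V$ and $\lambda\in\F_\ell^\times$, and hence $N\le\Sp(V)$. I would then apply Clifford theory to $N\trianglelefteq H$: as an $N$-module, $V$ decomposes into a direct sum of isotypic components that $H$ permutes, and irreducibility of $H$ forces this action to be transitive, while primitivity rules out a nontrivial such decomposition. A further argument, exploiting that a transvection has $1$-dimensional image and hence severely constrains any tensor decomposition $V\cong W\otimes U$ arising from multiplicities inside a single isotypic block, would show that $N$ itself acts irreducibly on $V$.

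Second, I would apply the Zalesski--Serezkin classification to $N\le\GL(V)$: since $\ell\ge 3$ the characteristic-$2$ exceptions do not occur, and since $V$ is defined over the prime field $\F_\ell$ no proper subfield subgroups $\SL_n(k')$ or $\Sp_n(k')$ can arise. Thus $N$ must be conjugate in $\GL(V)$ to one of $\SL(V)$, $\Sp(V)$, or a unitary group $\mathrm{SU}_m(\F_{\ell^2})$ embedded via restriction of scalars. The unitary case is ruled out because an $\F_{\ell^2}$-transvection has $\F_\ell$-rank two in $V=\F_\ell^{2m}$ and is therefore not an $\F_\ell$-transvection, whereas $N$ is by construction generated by genuine $\F_\ell$-transvections. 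The $\SL(V)$ case is incompatible with the existence of the invariant nondegenerate alternating form when $n\ge 3$, and for $n=2$ one has $\SL(V)=\Sp(V)$, so Dickson's theorem gives the conclusion directly. Hence $N=\Sp(V)$, and since $N\le H\le\Sp(V)$, one concludes $H=\Sp(V)$.

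The main technical obstacle I expect is the Clifford-theoretic reduction to an irreducible action of $N$: the transitive permutation of isotypic components is immediate from primitivity, but excluding multiplicities inside a single isotypic block is more delicate, as a priori $V$ could split as several copies of a smaller irreducible $N$-module compatible with the alternating form, and one needs the rigidity coming from the presence of an element acting as a transvection.
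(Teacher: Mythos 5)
The paper does not give its own proof of this theorem: it is imported verbatim as \cite[Theorem 1.1]{Hall08}, with the surrounding text noting only that it is ``deduced from the classification of linear groups generated by transvections by Zalesski and Serezkin (a well-known result of Dickson when $n=2$).'' So there is no in-paper argument for you to match; but your sketch is consistent with that indicated route, and it is essentially the standard (and Hall's) proof.

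Two comments on the content of your sketch. First, the step you single out as the ``main technical obstacle'' -- ruling out multiplicity inside the single isotypic block -- is in fact immediate once set up. After primitivity forces a single isotypic component you have $V\mid_N\cong W^{\oplus k}$ with $W$ an irreducible $N$-module, and any $\tau\in N$ preserves each of the $k$ isomorphic summands, acting on each by an operator conjugate to $\tau\mid_W$. Hence $\mathrm{rk}(\tau-1)=k\cdot\mathrm{rk}(\tau\mid_W-1)$; since $\tau$ is a nontrivial transvection this product equals $1$, forcing $k=1$ and $N$ irreducible. There is no need for any tensor-decomposition argument, and indeed one cannot assume $W$ is absolutely irreducible over $\F_\ell$, so the direct-sum count is the right tool. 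Second, a minor point: over the prime field $\F_\ell$ the unitary groups do not even appear as candidates in the Zalesski--Serezkin list, since they require the ground field to be a quadratic extension of the field of definition; your ad hoc rank-two argument ruling out a restriction-of-scalars $\SU_m$ is harmless but superfluous. With those remarks, the remainder of the argument (symplectic transvections, $\SL(V)$ excluded by the invariant alternating form for $n\ge 3$ and $\SL_2=\Sp_2$ for $n=2$, then $N\le H\le\Sp(V)$ forces $H=\Sp(V)$) is correct.
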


This can be applied to the sheaf $\Klc_2$ as follows:
\begin{proposition}\label{prop:HallMonoKl}
  Let $\ell\ge 3$ be a prime with $\ell\equiv 1\pmod{p}$, so that $\F_\lambda=\F_\ell$ (see Remark \ref{rem:indexCylFieldRam}). Then \eqref{eq:Flmonodromy} holds for $\widehat\Klc_2$: the arithmetic and geometric monodromy groups are equal to $\SL_2(\F_\ell)$.
\end{proposition}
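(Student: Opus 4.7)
The plan is to verify the hypotheses of Hall's Theorem \ref{thm:Hall1.1} for $V=\F_\ell^2$ and $H:=G_\geom(\widehat\Klc_2)$. By the symplectic autoduality of $\Klc_2$ (cf.\ Proposition \ref{prop:propertiesKl}), which descends to the $\Oc_\lambda$-model of Theorem \ref{thm:KlnOlambda} and thence mod $\lf$, $V$ carries a perfect alternating pairing preserved by $H$; under the hypothesis $\ell\equiv 1\pmod p$ we have $\F_\lambda=\F_\ell$ by Remark \ref{rem:indexCylFieldRam}, so $H\le\Sp_2(\F_\ell)=\SL_2(\F_\ell)$.

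Next, a conjugate of the unipotent $u$ of \eqref{eq:mu} lies in $H$ by the local monodromy analysis to be carried out in Section \ref{sec:localMonodromy}: the inertia at $0$ acts through a single Jordan block of size $n=2$, and such a $u$ is manifestly a transvection, since $u-1$ has rank $1$. The bound $\ell\ge 3$ is already part of the hypotheses.

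The delicate step is irreducibility. Rather than appealing directly to geometric irreducibility of $\Klc_2$ in characteristic zero, which need not survive reduction mod $\ell$, I would use the local structure at $\infty$. Since $\Swan_\infty(\Klc_2)=1$ and the rank is $2$, both breaks at $\infty$ equal $1/2$ and the wild inertia $P_\infty$, a pro-$p$ group, acts irreducibly on the generic stalk in characteristic $0$. Because $\ell\neq p$, the category of continuous finite-dimensional $\F_\lambda$-representations of $P_\infty$ is semisimple and lifts isomorphically to characteristic $0$, so irreducibility is inherited by $\widehat\rho_2(P_\infty)\subseteq H$.

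Primitivity is then formal: an imprimitive irreducible subgroup of $\GL_2(\F_\ell)$ permutes an unordered pair of lines $\{L_1,L_2\}$, but the transvection $u\in H$ has order $\ell\ge 3$ and hence acts trivially on any two-element set, so it fixes each $L_i$; since $u$ admits a unique fixed line, $L_1=L_2$, contradicting irreducibility. Theorem \ref{thm:Hall1.1} then forces $H=\Sp_2(\F_\ell)=\SL_2(\F_\ell)$, and the chain $G_\geom(\widehat\Klc_2)\le G_\arith(\widehat\Klc_2)\le\SL_2(\F_\ell)$ collapses to equalities. The principal obstacle is the mod-$\ell$ irreducibility; everything else is formal once the local monodromy picture at $0$ and $\infty$ is in place.
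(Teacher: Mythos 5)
Your overall approach matches the paper's: verify the hypotheses of Hall's Theorem~\ref{thm:Hall1.1}. The paper's own proof is terse and only invokes Proposition~\ref{prop:propertiesKl}~\ref{item:singleJordanBlockKl} for the transvection; your explicit primitivity argument (an irreducible imprimitive subgroup of $\GL_2(\F_\ell)$ would permute a pair of lines, but an element of order $\ell\ge 3$ must fix each of the two lines, whereas a transvection has a unique invariant line) is correct and a useful clarification that the paper leaves implicit.

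However, your irreducibility step contains a genuine error. You assert that ``the wild inertia $P_\infty$, a pro-$p$ group, acts irreducibly on the generic stalk in characteristic $0$.'' That is false. As the explicit description of the $I_\infty$-action in Proposition~\ref{prop:monodromyInftyExplicit} shows, $\Klc_n|_{I_\infty}$ is \emph{induced} from a character of the unique index-$n$ subgroup $I_{\infty,n}$, and since $(n,p)=1$ we have $P_\infty\le I_{\infty,n}$. By Mackey restriction, $\Klc_n|_{P_\infty}$ is therefore a direct sum of $n$ (distinct) characters; for $n=2$ this means $P_\infty$ acts as a sum of two distinct linear characters, which is not irreducible. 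Total wildness only guarantees that $P_\infty$ has no nonzero fixed vectors, not that it acts irreducibly. What \emph{is} irreducible is the $I_\infty$-action, and the key point is that this $I_\infty$-irreducibility already holds with $\F_\lambda$-coefficients: this is exactly Proposition~\ref{prop:propertiesKl}~\ref{item:KltotallyWildInfty}(a), whose statement and proof cover $A=\F_\lambda$ as well as $A=\overline\Q_\ell$ (via \cite[12.3]{KatzGKM}). So your concern that irreducibility ``need not survive reduction mod $\ell$'' was unfounded in this context, and the pro-$p$ semisimplicity argument, while true as stated for $P_\infty$, does not deliver the conclusion you need because the relevant group is $I_\infty$, which is not pro-$p$. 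Replacing that paragraph by a direct citation of Proposition~\ref{prop:propertiesKl}~\ref{item:KltotallyWildInfty}(a) repairs the proof; as written, the irreducibility step is not sound.
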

\begin{proof}
  A unipotent element of drop $1$ is an element whose Jordan decomposition has exactly one Jordan block of size 2 and all other blocks trivial. Moreover, a transvection is an element of drop $1$ and determinant $1$. The result then follows from Proposition \ref{prop:propertiesKl} \ref{item:singleJordanBlockKl} and Theorem \ref{thm:Hall1.1}.
\end{proof}

However, this argument does not generalize to $n\ge 3$, since the image of the inertia at $0$ in $\Klc_n$ contains a transvection only when $n=2$ (see Proposition \ref{prop:propertiesKl}). Moreover, we cannot handle the case $\F_\lambda\neq\F_\ell$, since Hall considers only reduction of sheaves of $\Z_\ell$-modules (and not $\Z[\zeta_{4p}]_\lambda$-modules as for Kloosterman sheaves).

\section{Strategy and classification theorem over $\F_\lambda$}

\subsection{Equivalence of large $\Oc_\lambda$ and $\F_\lambda$-monodromy}
\begin{lemma}
  The properties \eqref{eq:integralMonodromy} and \eqref{eq:Flmonodromy} in Theorem \ref{thm:KlcIntegralMono} are equivalent.
\end{lemma}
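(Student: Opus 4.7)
The plan is to handle the two directions separately. The forward implication \eqref{eq:integralMonodromy} $\Rightarrow$ \eqref{eq:Flmonodromy} is immediate, since the reduction map $\pi : \GL_n(\Oc_\lambda) \to \GL_n(\F_\lambda)$ restricts to surjections $\SL_n(\Oc_\lambda) \twoheadrightarrow \SL_n(\F_\lambda)$ and $\Sp_n(\Oc_\lambda) \twoheadrightarrow \Sp_n(\F_\lambda)$: indeed $\Oc_\lambda$ is an unramified extension of $\Z_\ell$ (as $\ell \neq p$), hence Henselian with residue field $\F_\lambda$, and both group schemes are smooth.

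All the content is in the reverse implication. Let $G \in \{\SL_n, \Sp_n\}$ denote the ambient classical group (according to the parity of $n$). The monodromy groups $G_\geom(\Klc_n) = \rho_n(\pi_{1,q}^\geom)$ and $G_\arith(\Klc_n) = \rho_n(\pi_{1,q})$ are closed subgroups of $G(\Oc_\lambda)$ (as compact continuous images of profinite groups), and by the assumed \eqref{eq:Flmonodromy} they both reduce modulo $\lf$ onto $G(\F_\lambda)$. The natural move is to reduce the statement to a \emph{lifting lemma}: for $\ell \gg_n 1$, any closed subgroup $H \le G(\Oc_\lambda)$ whose reduction modulo $\lf$ equals $G(\F_\lambda)$ is itself equal to $G(\Oc_\lambda)$. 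Applied to $G_\geom(\Klc_n)$ this yields $G_\geom(\Klc_n) = G(\Oc_\lambda)$, and then the sandwich $G(\Oc_\lambda) = G_\geom(\Klc_n) \le G_\arith(\Klc_n) \le G(\Oc_\lambda)$ gives \eqref{eq:integralMonodromy}.

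I would prove the lifting lemma by induction on the level, using the congruence filtration $K_i = \ker\bigl(G(\Oc_\lambda) \to G(\Oc_\lambda/\lf^i)\bigr)$. Since $\bigcap_i K_i = \{1\}$ and $H$ is closed, it suffices to establish $H K_i = G(\Oc_\lambda)$ for every $i \ge 1$; the base case $i = 1$ is the hypothesis. For the inductive step, the map $1 + \lf^i X \mapsto X \bmod \lf$ is a $G(\F_\lambda)$-equivariant isomorphism $K_i/K_{i+1} \simeq \mathfrak{g}(\F_\lambda)$ intertwining conjugation with the adjoint action. The subgroup $(H \cap K_i) K_{i+1}/K_{i+1}$ is then a $G(\F_\lambda)$-submodule of $\mathfrak{g}(\F_\lambda)$, and for $\ell \gg_n 1$ it must be either $0$ or all of $\mathfrak{g}(\F_\lambda)$ by irreducibility of the adjoint representation. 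The trivial case is excluded using perfectness of $G(\F_\lambda)$: commutators in $H$ of elements lifting non-commuting pairs in $G(\F_\lambda)$ furnish non-trivial elements of $H \cap K_i$ with non-zero image in $K_i/K_{i+1}$.

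The main obstacle is the pair of group-theoretic inputs to the inductive step, namely the irreducibility of the adjoint representation of $G(\F_\lambda)$ on $\mathfrak{g}(\F_\lambda)$ and the perfectness of $G(\F_\lambda)$. Both fail only for finitely many $\ell$ depending on $n$ (essentially $\ell \nmid n$ for $G = \SL_n$, $\ell > 2$ for $G = \Sp_n$, plus $\ell \ge 5$ if $n = 2$), all of which are absorbed into the hypothesis $\ell \gg_n 1$ of Theorem \ref{thm:KlcIntegralMono}. This is also where the condition $([\F_\lambda:\F_\ell], n) = 1$ would naturally enter if one wanted an effective version, since it controls how the adjoint module decomposes after restriction to the subfield.
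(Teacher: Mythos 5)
The paper disposes of this lemma by citation: the forward direction is attributed to surjectivity of $\SL_n(R)\to\SL_n(R/\af)$ over a DVR (via elementary matrices), and the reverse direction to an argument of Serre \cite[IV--23, 27--28]{Serre89} and to \cite[8.13.3]{KatzESDE}. Your forward direction (smoothness of the group schemes plus Henselianness of $\Oc_\lambda$) is a correct alternative. For the reverse direction you attempt to reconstruct Serre's congruence-filtration argument rather than cite it, and the skeleton is right: reduce to the lifting lemma, filter by $K_i$, identify $K_i/K_{i+1}\cong\mathfrak{g}(\F_\lambda)$ as a $G(\F_\lambda)$-module, and invoke irreducibility of the adjoint module to conclude that $(H\cap K_i)K_{i+1}/K_{i+1}$ is $0$ or everything.

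The gap is in how you exclude the ``$0$'' alternative. You propose: ``commutators in $H$ of elements lifting non-commuting pairs in $G(\F_\lambda)$ furnish non-trivial elements of $H\cap K_i$.'' This does not work: if $g,h\in H$ lift a non-commuting pair $\bar g,\bar h\in G(\F_\lambda)$, then $[g,h]$ reduces to $[\bar g,\bar h]\neq 1$, so $[g,h]\notin K_1$ and a fortiori $[g,h]\notin K_i$. Perfectness of the finite group $G(\F_\lambda)$ by itself is not the relevant input. What one actually needs is that the extension $1\to K_i/K_{i+1}\to G(\Oc_\lambda/\lf^{i+1})\to G(\Oc_\lambda/\lf^i)\to 1$ admits no complement containing the image of $H$. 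For $i=1$ this is the essentially nontrivial point; Serre's device is to take a unipotent $s\in G(\F_\lambda)$ of order $\ell$ and show by direct computation that no lift of $s$ to $G(\Oc_\lambda/\lf^2)$ has order $\ell$ once $\ell\gg_n 1$, which forbids a complement isomorphic to $G(\F_\lambda)$. For $i\ge 2$ one then uses perfectness of the \emph{Lie algebra} $\mathfrak{g}(\F_\lambda)$ (not of the group): the commutator pairing $K_1/K_2\times K_{i-1}/K_i\to K_i/K_{i+1}$ is identified with the Lie bracket, which is surjective, and since $(H\cap K_1)K_2=K_1$ and $(H\cap K_{i-1})K_i=K_{i-1}$ by the earlier steps, commutators of elements of $H\cap K_1$ with elements of $H\cap K_{i-1}$ already generate $K_i$ modulo $K_{i+1}$. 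So the idea is salvageable, but the ingredient you name (perfectness of $G(\F_\lambda)$, applied to non-commuting lifts) is not the one that makes the argument close, and as written the inductive step does not go through.
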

\begin{proof}
  On one hand, \eqref{eq:integralMonodromy} implies \eqref{eq:Flmonodromy} by surjectivity of the reduction $\SL_n(R)\to\SL_n(R/\af)$ for any discrete valuation ring $R$ and any ideal $\af\normal R$ (since $\SL_n(R)$ is generated by elementary matrices in this case). On the other hand, an argument of Serre \cite[IV--23, 27--28]{Serre89} shows that the converse holds (see also \cite[8.13.3]{KatzESDE} for a result valid for general closed subgroups of $\GL_n(\Oc_\lambda)$).
\end{proof}
Understandably, we will prefer to work with finite groups of Lie type in \eqref{eq:Flmonodromy}, rather than with groups over complete rings in \eqref{eq:integralMonodromy}.

\subsection{Classification theorem over $\F_\lambda$}

The strategy of the proof of Theorem \ref{thm:KlcIntegralMono} is then as follows: if the conclusion does not hold, there exists a maximal (proper) subgroup
\[G_\geom(\widehat\Klc_n)\le H\lneq
\begin{cases}
  \SL_n(\F_\lambda)&:n\text{ odd}\\
  \Sp_n(\F_\lambda)&:n\text{ even}.
\end{cases}\]
By using the classification of maximal subgroups of classical groups of Aschbacher and Kleidman-Liebeck, we will show the following general classification theorem:
\begin{theorem}\label{thm:classification}
  Let $n\ge 2$, let $\F_\lambda$ be a field of characteristic $\ell$ and let
  \[H\le
  \begin{cases}
    \SL_n(\F_\lambda)&: n\text{ odd}\\
    \Sp_n(\F_\lambda)&: n\text{ even}
  \end{cases}
  \]
  be a maximal (proper) subgroup such that:
\begin{enumerate}
\item \label{item:irredfaithfulrepr} The action of $H$ on $\F_\lambda^n$ is irreducible.
\item \label{item:singleJordanBlock} $H$ contains a unipotent element with a single Jordan block.
\end{enumerate}
Then, for $\ell\gg_n 1$, we have either:
\begin{enumerate}
\item\label{item:classification1} $H=N_{\SL_n(\F_\lambda)}(\SO_n(\F_\lambda))$ for $n\ge 3$ odd\footnote{We recall that for $n$ odd, there is only one type of orthogonal group over a finite field up to isomorphism (see e.g. \cite[Section 2.5]{KleiLieb90}), so we do not need to specify the quadratic form.}.
%\item\label{item:classification2} $H=N_{\SL_7(\F_\lambda)}(G_2(\F_\lambda))$, where the embedding $G_2(\F_\lambda)\le\SL_7(\F_\lambda)$ is given by the unique 7-dimensional irreducible representation\footnote{This is the realization of $G_2$ as the automorphism group of imaginary octonions, see \cite[4.3]{Wil09}.}.
\item\label{item:classification3} $H=N_{\SL_n(\F_\lambda)}(\SL_n(\F'))$ for $n\ge 3$ odd or $H=N_{\Sp_n(\F_\lambda)}(\Sp_n(\F'))$ for $n$ even, if $\F'\le\F_\lambda$ is a subfield of prime index.
\item\label{item:classification4} $H=N_{\SL_n(\F_\lambda)}(\SU_n(\F'))$ for $n\ge 3$ odd, if $\F'\le\F_\lambda$ is a subfield  of index $2$.
\end{enumerate}
\end{theorem}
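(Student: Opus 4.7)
Our approach is to apply the Aschbacher--Kleidman--Liebeck classification of maximal subgroups of the ambient classical group $\Gamma = \SL_n(\F_\lambda)$ (respectively $\Sp_n(\F_\lambda)$) and to use the two hypotheses to discard every class except those appearing in the conclusion. Recall that this classification partitions the maximal subgroups of $\Gamma$ into eight geometric classes $\mathcal{C}_1,\dots,\mathcal{C}_8$---stabilisers of, respectively: subspaces, direct-sum decompositions, field extensions, tensor decompositions, subfield structures, extraspecial $r$-group normalisers, permutation tensor products, and smaller-type classical subgroups---together with a residual class $\mathcal{S}$ of almost simple groups admitting an irreducible representation of degree $n$ not preserving any such geometric structure.

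Hypothesis \ref{item:irredfaithfulrepr} eliminates $\mathcal{C}_1$ immediately. The regular-unipotent hypothesis \ref{item:singleJordanBlock} is then the main lever for discarding $\mathcal{C}_2$, $\mathcal{C}_3$, $\mathcal{C}_4$, and $\mathcal{C}_7$. For the imprimitive class $\mathcal{C}_2$, any unipotent $u\in H$ either stabilises each summand $V_i$ of a nontrivial decomposition $V = V_1\oplus\cdots\oplus V_t$, yielding at least $t\ge 2$ Jordan blocks; or it permutes the summands nontrivially, in which case the cycle structure forces all Jordan blocks of $u$ to have size $< n$. For the semilinear class $\mathcal{C}_3$, $H$ normalises $\GL_{n/s}(\F_{\lambda^s})$ for some prime $s\mid n$, and any unipotent therein has $\F_\lambda$-Jordan blocks of size at most $n/s$. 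For the tensor classes $\mathcal{C}_4$ and $\mathcal{C}_7$, the tensor product of two nontrivial unipotent elements decomposes by the Clebsch--Gordan rule into several Jordan blocks, none of which attains the full dimension.

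The extraspecial class $\mathcal{C}_6$ is dealt with by order considerations: such an $H$ has cardinality bounded in terms of $n$, so its $\ell$-elements have order bounded by some $C(n)$, whereas a regular unipotent element has order at least $\ell$; this rules out a regular unipotent in $H$ as soon as $\ell > C(n)$. The class $\mathcal{S}$ is the principal technical obstacle: we combine Landazuri--Seitz lower bounds on the dimensions of faithful irreducible representations of quasi-simple groups (in both defining and cross characteristic) with the classifications of Suprunenko, Saxl--Seitz, and Testerman of pairs $(S,V)$ in which a quasi-simple group $S$ contains a regular unipotent of $\GL(V)$ acting on a low-dimensional irreducible module. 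These results together leave only finitely many exceptional configurations in each rank $n$, all of which are excluded once $\ell \gg_n 1$.

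What survives is the subfield class $\mathcal{C}_5$ and the classical class $\mathcal{C}_8$. The members of $\mathcal{C}_5$ that contain a regular unipotent of $\Gamma$ are precisely $N_\Gamma(\SL_n(\F'))$ and $N_\Gamma(\Sp_n(\F'))$ for subfields $\F'\le\F_\lambda$ of prime index, giving case \ref{item:classification3}, and $N_{\SL_n(\F_\lambda)}(\SU_n(\F'))$ for $\F'$ of index $2$, giving case \ref{item:classification4}; the latter twisted form occurs only inside the ambient $\SL_n$ and hence only for $n$ odd here. In $\mathcal{C}_8$, the sole maximal subgroup of $\SL_n(\F_\lambda)$ in odd characteristic compatible with both irreducibility and the presence of a regular unipotent is $N_{\SL_n(\F_\lambda)}(\SO_n(\F_\lambda))$ for $n\ge 3$ odd, yielding case \ref{item:classification1} (no analogous $\mathcal{C}_8$ subgroup of $\Sp_n$ survives in odd characteristic). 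The main obstacle throughout will be the uniform treatment of the class $\mathcal{S}$, since it requires piecing together representation-theoretic bounds across all almost simple groups with small faithful irreducible modules; the remaining classes reduce essentially to Jordan-form computations.
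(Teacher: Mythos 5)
Your overall strategy matches the paper's: both invoke the Aschbacher/Kleidman-Liebeck (in the paper, the Liebeck-Seitz reformulation) classification of maximal subgroups, dispose of the geometric classes $\mathcal{C}_1$--$\mathcal{C}_7$ by Jordan-structure and order arguments, leave the subfield and classical-normalizer classes to populate the conclusion, and then confront the almost-simple residual class $\mathcal{S}$. Your treatment of $\mathcal{C}_1$--$\mathcal{C}_7$ is essentially sound and agrees with the paper (the paper uses a slightly different 7-class numbering but the arguments -- cycle structure of the induced permutation, Clebsch--Gordan for tensor decompositions, order bounds for extraspecial normalizers -- are the same).

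However, your sketch of the residual class $\mathcal{S}$ papers over the genuine difficulties, and this is precisely where the theorem is hard. Three gaps stand out. First, you never reduce to groups of Lie type in the defining characteristic: the paper does this first (via Larsen--Pink, or via CFSG together with Landazuri--Seitz for cross-characteristic Lie type and Wagner's bound for alternating groups), and only then analyzes the defining-characteristic case. Second, and more seriously, the regular unipotent $u$ is a priori only known to lie in $\widetilde H = \pi(H) \le \Aut(S)$, not in the simple group $S = \pi(T)$ itself; the paper must prove $\pi(u)\in S$ by bounding $|\Out(S)| \ll_n 1$ using Liebeck's lower bounds on $m(S)$, which control both the rank and the degree $a$ of the defining field. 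Without this step, you cannot hand $S$ (or its preimage $T$) to any classification of groups containing a regular unipotent. Third, you cite Saxl--Seitz and Suprunenko as if they apply directly, but those results classify positive-dimensional closed subgroups (resp. representations of algebraic groups) over an algebraically closed field; the paper explicitly remarks that Theorem \ref{thm:classification} \emph{cannot} be deduced from Saxl--Seitz by naive descent, and instead routes through the Seitz--Testerman lifting theorem to extend the absolutely irreducible representation of the finite group $T$ to a morphism of algebraic groups, after which Suprunenko's Theorem (1.9) applies. Your proposal omits this lifting entirely, so as written the $\mathcal{S}$-class is not actually excluded.
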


More precisely, a theorem of Larsen-Pink on finite subgroups of algebraic groups and results on representations of finite groups of Lie type in various characteristics allow to reduce to the descent of a classification theorem of Suprunenko \cite[Theorem (1.9)]{Sup95} about subgroups of classical groups over algebraically closed fields containing regular unipotent elements (i.e. a unipotent element with a single Jordan block in the case of $\SL_n$).\\

By the properties of Kloosterman sheaves, $H$ verifies the hypotheses of Theorem \ref{thm:classification}, while at the same time conclusions \ref{item:classification1}--\ref{item:classification4} can be excluded. This would give Theorem \ref{thm:KlcIntegralMono}.

\subsection{Comparison with Katz's method over $\C$}

The above strategy is parallel to the approach Katz uses to prove his Theorem \ref{thm:monodromyKl}.

Indeed, by using the semisimplicity of $G_\geom^0(\Klc_n)\le\SL_n(\C)$ and the presence of a unipotent element with a single Jordan block, Katz reduces to the following classification theorem:
\begin{theorem}[{\cite[Classification Theorem 11.6]{KatzGKM}}]\label{thm:KatzClassification}
  Let $n\ge 2$ be an integer and $\Gc$ be a simple Lie algebra over $\C$ given with a faithful irreducible representation
  \[\rho: \Gc\hookrightarrow\sl_n(\C).\]
  Suppose that there exists a nilpotent element $N\in\Gc$ such that $\rho(N)$ has a single Jordan block. Then the pair $(\Gc,\rho)$ is isomorphic to one of the following:
  \begin{enumerate}
  \item $\Gc=\sl_n(\C)$ \textup{(}$n\ge 2$\textup{)}, $\sp_n(\C)$ \textup{(}$n\ge 4$ even\textup{)} or $\so_n(\C)$ \textup{(}$n\ge 5$ odd\textup{)}, with the standard $n$-dimensional representation.
  \item $\Gc=\mathfrak{g_2}(\C)$ with its unique 7-dimensional irreducible representation.
  \end{enumerate}
\end{theorem}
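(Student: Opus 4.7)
The plan is to convert the hypothesis on $\rho(N)$ into a statement about an $\sl_2$-subalgebra of $\Gc$, then to run a case analysis by type of $\Gc$. First, by the Jacobson--Morozov theorem, I extend the nilpotent $N$ to an $\sl_2$-triple $(H,N,N^-)\subset\Gc$. The assumption that $\rho(N)$ has a single Jordan block of size $n$ is equivalent, via the classification of finite-dimensional $\sl_2$-representations, to the statement that $V=\C^n$ pulled back along $\rho$ to this $\sl_2$ is the unique irreducible $\sl_2$-module of dimension $n$; in particular the eigenvalues of $\rho(H)$ on $V$ are exactly $n-1,n-3,\dots,-(n-1)$, each with multiplicity one.

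Next, I fix a Cartan subalgebra $\mathfrak{h}\subset\Gc$ containing $H$ and a positive root system in which $H$ is dominant. Dynkin's theorem on $\sl_2$-triples then says that $H$ has integer coordinates lying in $\{0,1,2\}$ on the simple coroots, so that there are only finitely many candidate $\sl_2$-triples up to conjugacy, indexed by weighted Dynkin diagrams. Since the $\Gc$-weights of $V$ pair with $H$ to give pairwise distinct values, $V$ must have exactly $n$ weights, all of multiplicity one, and the highest weight $\lambda$ must satisfy $\lambda(H)=n-1$. The two equalities $n=\dim V_\lambda$ (computable via Weyl's formula) and $\lambda(H)=n-1$ impose tight numerical constraints on the pair $(\Gc,\lambda)$.

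The rest is a case analysis by type. For each $\Gc$, one enumerates the faithful irreducibles whose weights are all of multiplicity one, and for each weighted Dynkin diagram one checks whether $V|_{\sl_2}$ is a single irreducible; only the listed examples pass. In types $A_\ell$, $B_\ell$, $C_\ell$ only the defining representations survive, producing $\sl_n$, $\so_n$ for $n$ odd, and $\sp_n$ for $n$ even; in type $D_\ell$ even the defining representation fails, because a regular nilpotent of $\so_{2\ell}$ has Jordan partition $(2\ell-1,1)$, and higher fundamental and spin representations are excluded by dimension-versus-eigenvalue bounds. For the exceptional types $F_4,E_6,E_7,E_8$, the principal $\sl_2$ already forces every nontrivial $\Gc$-irreducible to split into at least two $\sl_2$-summands of different dimensions, ruling them out; only $G_2$ with its minimal $7$-dimensional representation survives, the $14$-dimensional adjoint representation decomposing as $V_{10}\oplus V_2$ under the principal $\sl_2$. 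The main obstacle is the exceptional case analysis, which relies on explicit branching data for small irreducibles along $\sl_2$-subalgebras, essentially the content of Dynkin's tables of nilpotent orbits.
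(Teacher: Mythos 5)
The paper quotes this theorem from Katz's book without reproving it (and the remark after the statement notes the alternative route via Suprunenko's classification, valid in arbitrary characteristic). Your blind reconstruction matches the shape of Katz's argument: embed $N$ in an $\sl_2$-triple by Jacobson--Morozov, note that the single-Jordan-block condition is equivalent to $V|_{\sl_2}$ being irreducible, deduce that every weight of $V$ has multiplicity one and pairs distinctly with $H$, and classify by Dynkin type. One observation worth making explicit, which collapses your enumeration over all weighted Dynkin diagrams to a single case: the hypothesis already forces $N$ to be a \emph{regular} nilpotent. Since the $H$-weight spaces of $V$ are one-dimensional and $\rho$ is faithful, $Z_{\Gc}(H)$ acts diagonally on $V$ and is therefore abelian; being the centralizer of a semisimple element it is reductive, so it equals a Cartan subalgebra and $H$ is regular semisimple. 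Thus the triple is principal, only the weighted diagram with all labels $2$ needs checking, and the constraints $\lambda(H)=n-1$ together with all weights simple isolate the defining representations of $\sl_n$, $\sp_n$, $\so_n$ ($n$ odd) and the seven-dimensional representation of $\mathfrak{g_2}$ (exponents $1,5$). Your plan is otherwise correct, and the case-analysis claims you make (in particular the failure of $D_\ell$ via the Jordan type $(2\ell-1,1)$ of a regular nilpotent, and the behaviour of the exceptional types under the principal $\sl_2$) are accurate.
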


The extraneous cases are then excluded by using properties of Kloosterman sheaves.\\

Comparing the classification theorems \ref{thm:classification} and \ref{thm:KatzClassification}, it is interesting to note how the maximality assumption replaces the simplicity (it cannot be assumed a priori that $G_\geom(\widehat\Klc_n)\le\GL_n(\F_\lambda)$ is quasisimple), and how additional conclusions arise in Theorem \ref{thm:classification}, calling for new arithmetic inputs to exclude them.

\begin{remark}
  Theorem \ref{thm:KatzClassification} also follows from the classification theorem of Suprunenko mentioned above, which is valid over an algebraically closed field of arbitrary characteristic.
\end{remark}

\section{Local monodromy of Kloosterman sheaves}\label{sec:localMonodromy}

To obtain useful information for the determination of monodromy groups, a general strategy is to study images of inertia groups at singularities, which lie in the geometric monodromy group.

For example, the tame part of the break decomposition gives unipotent elements with prescribed Jordan form (see e.g. Proposition \ref{prop:propertiesKl} below), while values of Swan conductors can rule out the existence of certain morphisms (see e.g. \cite[Lemma 1.19]{KatzGKM}).\\

In this section, we recall results about local monodromy of Kloosterman sheaves from \cite{KatzGKM}, emphasizing that they still hold for the reduced sheaves, and explicitly compute the local monodromy at $\infty$. This will be useful in the next two sections.
\subsection{Results from \cite{KatzGKM}}

\begin{proposition}\label{prop:propertiesKl}
  Let $n\ge 2$ and for $A$ equal to $\overline\Q_\ell$, or $\F_\lambda$, let $\Klc_n$ be the Kloosterman sheaf of $A$-modules on $\P^1/\F_q$ defined in Theorems \ref{thm:Kln} and \ref{thm:KlnOlambda}, corresponding to a representation $\rho: \pi_{1,q}\to\GL_n(A)$. Then
  \begin{enumerate}
  \item \label{item:singleJordanBlockKl} $\Klc_n$ is unipotent as $I_0$-representation, where $I_0\le\pi_{1,q}$ is the inertia group at $0$. The image of a topological generator of the tame inertia group at $0$ has a single Jordan block.
  \item \label{item:KltotallyWildInfty} $\Klc_n$ is totally wild at $\infty$, with $\Swan_\infty(\Klc_n)=1$. In particular:
    \begin{enumerate}
    \item $\rho(I_\infty)$ acts irreducibly on $A^n$ and admits no faithful $A$-linear representation of dimension $<n$.
    \item\label{item:KltotallyWildInftyb} Any character $\rho(I_\infty)\to A^\times$ is trivial on $\rho(P_\infty)$, for $P_\infty\le\pi_{1,q}$ the wild inertia group at $\infty$.
    \end{enumerate}
  \item \label{item:KldetTrivial} $\det\Klc_n$ is trivial.
  \item \label{item:Klpairingeven} If $n$ is even, there exists an alternating perfect pairing $\Klc_n\otimes\Klc_n\to A$ of lisse sheaves on $\G_m$.
  \item \label{item:Klpairingodd} If $n$ is odd, then $\Klc_n\otimes\Klc_n$ is totally wild at $\infty$, with all breaks at $1/n$. In particular, there is no nonzero $P_\infty$-equivariant bilinear form $\Klc_n\otimes\Klc_n\to A$.
  \end{enumerate}
\end{proposition}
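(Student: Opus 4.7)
My plan is to observe that all five items are established for the $\overline\Q_\ell$-sheaf of Theorem \ref{thm:Kln} in \cite{KatzGKM} (Chapters 1, 4, 7, 10), so the task reduces to checking that each property descends to the integral model of Theorem \ref{thm:KlnOlambda} and, by reduction modulo $\lambda$, to the $\F_\lambda$-sheaf $\widehat\Klc_n$. Throughout one fixes a representation $\rho_n^\circ\colon\pi_{1,q}\to\GL_n(\Oc_\lambda)$ specialising to $\rho_n$ over $\overline\Q_\ell$ and to $\widehat\rho_n$ over $\F_\lambda$, and uses that local invariants of $\ell$-adic sheaves ($\ell\ne p$) are compatible with reduction for integrally defined representations.

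Items \ref{item:singleJordanBlockKl}, \ref{item:KldetTrivial}, \ref{item:Klpairingeven} and \ref{item:Klpairingodd} descend essentially formally. For \ref{item:singleJordanBlockKl}, the integral model may be chosen so that the image $u$ of a topological generator of the tame inertia at $0$ is the standard unipotent Jordan block $J_n(1)\in\GL_n(\Oc_\lambda)$; then $(u-I)^{n-1}$ has a unit entry, so reduction modulo $\lambda$ preserves both unipotence and the single-Jordan-block structure. Triviality of $\det\Klc_n$ is a statement about a continuous character $\pi_{1,q}\to\Oc_\lambda^\times$ and is preserved under reduction. The symplectic autoduality of \cite[4.1.11]{KatzGKM} for $n$ even is realised at the level of $\Oc_\lambda$-sheaves and reduces to an alternating perfect pairing over $\F_\lambda$, giving \ref{item:Klpairingeven}. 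Item \ref{item:Klpairingodd} follows from the computation \cite[10.4.5]{KatzGKM} of the break decomposition of $\Klc_n\otimes\Klc_n$ at $\infty$, a statement about the $I_\infty$-representation with $\ell$-adic coefficients that descends verbatim to the integral and modular setting.

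This leaves item \ref{item:KltotallyWildInfty}. By compatibility of Swan conductors with reduction, $\Swan_\infty(\widehat\Klc_n)=1$ and $\widehat\Klc_n\vert_{I_\infty}$ is totally wild with unique break $1/n$. For (a), the structure theorem for totally wild representations with a single break (\cite[Chapter 1]{KatzGKM}) realises $\Klc_n\vert_{I_\infty}$ as the induction of a character from an index-$n$ subgroup of $I_\infty$; irreducibility of the action on $A^n$ and the lower bound of $n$ on the dimension of any faithful representation of $\rho(I_\infty)$ then follow from the group-theoretic structure alone, independently of the coefficient field. For (b), $\rho(P_\infty)$ is the continuous image of the pro-$p$ group $P_\infty$, hence pro-$p$; a continuous character into $A^\times$ must land in its pro-$p$ part, which is trivial for $A=\F_\lambda$ (finite of order coprime to $p$) and forced to be trivial by continuity for $A=\overline\Q_\ell$. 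The one point that genuinely deserves care, and likely the main obstacle to state cleanly, is the compatibility of the break filtration and Swan conductor with reduction modulo $\lambda$ for integrally defined $\ell$-adic representations; this is implicit throughout \cite{KatzGKM} but must be invoked explicitly here to justify the descent of items \ref{item:KltotallyWildInfty} and \ref{item:Klpairingodd}.
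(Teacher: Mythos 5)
Your overall approach matches the paper's, which is essentially a list of citations to \cite{KatzGKM} (items 7.4.1, 12.3.3, 1.11, 1.18, 7.4.3, 4.1.11, 4.2.1, 10.4.4, 4.1.7, and 12.3 for the descent to $\F_\lambda$-coefficients); your elaborations on the descent for items \ref{item:singleJordanBlockKl}, \ref{item:KldetTrivial}, \ref{item:Klpairingeven}, \ref{item:Klpairingodd} and on the Swan compatibility are reasonable.

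However, your argument for item \ref{item:KltotallyWildInfty}\ref{item:KltotallyWildInftyb} has a genuine gap. You claim that a character $\rho(I_\infty)\to A^\times$ restricted to the finite $p$-group $\rho(P_\infty)$ is automatically trivial because $A^\times$ has ``trivial pro-$p$ part.'' This is false for both coefficient fields. For $A=\overline\Q_\ell$, the group $\overline\Q_\ell^\times$ contains all $p$-power roots of unity $\mu_{p^\infty}$ as a discrete subgroup (since $p\ne\ell$, the elements $\zeta_{p^k}-1$ are $\ell$-adic units), so a nontrivial continuous homomorphism from a finite $p$-group into $\overline\Q_\ell^\times$ certainly exists. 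For $A=\F_\lambda=\Oc/\lf$, the order $|\F_\lambda^\times|$ is \emph{not} coprime to $p$: since $\lf$ lies above $\ell\ne p$ and $1-\zeta_p$ divides $p$ in $\Oc$, the image of $\zeta_p$ in $\F_\lambda$ is a nontrivial $p$-th root of unity, so $p\mid|\F_\lambda^\times|$. (You appear to have swapped $\ell$ and $p$ here; $|\F_\lambda^\times|$ is coprime to $\ell$, which is irrelevant.) Thus the claim that characters into $A^\times$ automatically kill a $p$-group is wrong, and the argument does not establish \ref{item:KltotallyWildInftyb}. The actual content of the statement, which is what \cite[1.11, 1.18]{KatzGKM} provides once one knows $\Swan_\infty=1$, is a structural fact about $\rho(I_\infty)$: for a totally wild $I_\infty$-representation of Swan conductor $1$, the image $\rho(P_\infty)$ sits inside the commutator subgroup $[\rho(I_\infty),\rho(I_\infty)]$, so that it is killed by any abelian-valued character of $\rho(I_\infty)$. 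This uses the irreducibility and the induced structure from the index-$n$ subgroup in a more essential way than the group-theoretic shortcut you propose; it is not a consequence of $\rho(P_\infty)$ being a $p$-group alone.
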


\begin{proof}
 
  \begin{enumerate}
  \item See \cite[7.4.1]{KatzGKM}. By \cite[12.3.3]{KatzGKM}, the result still holds for $\Klc_n$ as a sheaf of $\F_\lambda$-modules.
  \item This is \cite[1.11, 1.18]{KatzGKM} with the fact that $\Swan_\infty(\Klc_n)=1$.
  \item See \cite[7.4.3]{KatzGKM}.
  \item See \cite[4.1.11]{KatzGKM} (existence) and \cite[4.2.1]{KatzGKM} (sign).
  \item For the first assertion, see \cite[10.4.4]{KatzGKM}. For the second, proceed as in \cite[4.1.7]{KatzGKM}.
  \end{enumerate}
  In the finite case, see also \cite[12.3]{KatzGKM}.
\end{proof}

By Proposition \ref{prop:propertiesKl} \ref{item:singleJordanBlockKl}, note that the geometric monodromy group contains an element conjugate to the Jordan block $u$ from Equation \eqref{eq:mu}.

\subsection{Explicit local monodromy at $\infty$}

The local monodromy at $\infty$ of $\Klc_n$ is determined explicitly in \cite{KatzGKM} (as $P_\infty$-representation) and \cite{FKMSBilinearKS} (more precisely as $I_\infty$-representation), and more generally for hypergeometric sheaves in \cite[Proposition 0.7]{Fu10}. We make this even more concrete by finding a matrix form of the representation.

\begin{proposition}\label{prop:monodromyInftyExplicit}
  Under the same notations as in Proposition \ref{prop:propertiesKl}, we assume that $k=\F_q$ contains a primitive $2n$th root of unity $\zeta_{2n}$. Let $Z\in\overline{k(T)}$ be a solution to $Z^{2n}=T$ and $W\in\overline{k(Z)}=\overline{k(T)}$ be a solution to
  \[W^{|k|}-W=-Z^2.\]
  Then the restriction $\rho\mid_{I_\infty}: I_\infty\to\GL_n(A)$ is isomorphic (over $\F_\lambda$) to the representation
  \[\sigma\mapsto\left((-1)^{(n+1)\left(\frac{j-i+i_0}{n}\right)} e \left(\frac{n\tr_{k/\F_p}(a_0\zeta_{n}^{i})}{p}\right)\delta_{i-j\equiv i_0\pmod{n}}\right)_{1\le i,j\le n,}\]
  where $i_0\in\Z/2n$ and $a_0\in k$ are such that $\sigma(Z)=\zeta_{2n}^{i_0}Z$ and $\sigma(W)=\zeta_{2n}^{2i_0}(W+a_0)$.
\end{proposition}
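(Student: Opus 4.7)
The plan is to combine the known description of $\rho|_{I_\infty}$ as an induced representation (Katz \cite[Chapters 5, 10]{KatzGKM}, Fu \cite[Proposition 0.7]{Fu10}, and \cite{FKMSBilinearKS}) with a basis adapted to the covers $Z^{2n}=T$ and $W^{|k|}-W = -Z^2$, and then unfold the matrix of the induced representation explicitly.

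First, I would recall that $\rho|_{I_\infty}$ is isomorphic to $\mathrm{Ind}_H^{I_\infty}(\chi)$, where $H \le I_\infty$ is the subgroup of index $n$ fixing $T^{1/n}$ (equivalently, $Z^2$), and $\chi : H \to A^\times$ is the product of the Artin-Schreier character sending $\tau \mapsto \psi(\tau(W)-W)$, with $\psi(x) = e(\tr_{k/\F_p}(x)/p)$, and a tame Kummer character of order dividing $2n$. The latter is determined by the normalization $\det\rho = 1$ (Proposition~\ref{prop:propertiesKl}\ref{item:KldetTrivial}) and is ultimately responsible for the sign $(-1)^{n+1}$ already appearing in the matrix $m$ of \eqref{eq:mu}.

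Next, I pick coset representatives $\tau_1, \ldots, \tau_n \in I_\infty$ by requiring $\tau_i(Z) = \zeta_{2n}^{2i}Z$ and a compatible normalized action on $W$, and take as basis of the induced representation the vectors $v_i := \tau_i \cdot v_0$, where $v_0$ is the standard generator. For $\sigma \in I_\infty$ acting on $Z$ and $W$ as in the statement, I uniquely decompose $\sigma \tau_j = \tau_i \eta$ with $\eta \in H$. Comparing actions on $Z$ gives $\zeta_{2n}^{2j+i_0}Z = \pm\zeta_{2n}^{2i}Z$ (since $\eta$ fixes $Z^2$ and hence sends $Z$ to $\pm Z$), which forces the congruence $i - j \equiv i_0 \pmod n$ and thus the Kronecker-$\delta$ factor. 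The coefficient $\chi(\eta)$ then splits into two pieces:
\begin{itemize}
\item a wild contribution $\psi(\eta(W)-W)$ which, using $\sigma(W) = \zeta_{2n}^{2i_0}(W+a_0)$ and the chosen action of $\tau_i$ on $W$, evaluates to $e(n\tr_{k/\F_p}(a_0 \zeta_n^i)/p)$; the factor $n$ arises from the mismatch between $T = Z^{2n}$ and the rate of change in the Artin-Schreier equation $W^{|k|}-W=-Z^2$;
\item a tame contribution equal to $(-1)^{(n+1)(j-i+i_0)/n}$, where the integer $(j-i+i_0)/n$ records how many times the lift of the exponent from $\Z/n$ to $\Z/2n$ wraps around.
\end{itemize}
Combining these yields precisely the matrix form in the statement.

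The main obstacle will be the bookkeeping of the tame sign — specifically the appearance of the exponent $n+1$ and of the normalization of the Kummer character. This is pinned down by three compatibility requirements: (i) the trivial-determinant condition $\det\rho = 1$, which constrains the product of the tame contributions over $i$ for any fixed $\sigma$; (ii) the matching with the $I_\infty$-isomorphism class obtained in \cite[Proposition 0.7]{Fu10}, which fixes the tame character once the Artin-Schreier part is chosen; and (iii) the value $\Swan_\infty(\Klc_n)=1$ (Proposition~\ref{prop:propertiesKl}\ref{item:KltotallyWildInfty}), which fixes the normalization of the defining equation $W^{|k|}-W = -Z^2$ against the trace-function formula of Theorem~\ref{thm:Kln}. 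Once these sign and normalization checks are carried out, the stated matrix form follows immediately from the computation above.
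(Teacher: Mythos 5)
Your overall strategy -- recognize $\rho|_{I_\infty}$ as an induced representation of a rank-one Artin--Schreier $\otimes$ Kummer character on the index-$n$ subgroup, build the explicit covers, pick coset representatives, and unfold the matrix of the induced representation -- is exactly the paper's. However, there is a concrete bookkeeping error in the step that is supposed to produce the stated matrix form.

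You choose coset representatives $\tau_i$ with $\tau_i(Z)=\zeta_{2n}^{2i}Z$. With that choice, comparing actions on $Z$ in $\sigma\tau_j=\tau_i\eta$ gives $\zeta_{2n}^{2j+i_0}=\pm\zeta_{2n}^{2i}$, i.e. $2(j-i)\equiv -i_0\pmod n$, which is \emph{not} the congruence $i-j\equiv i_0\pmod n$ that you assert, and not the one in the statement. Likewise, the root of unity entering the Artin--Schreier factor would come out as $\zeta_n^{2i}$ (or a shift thereof) rather than $\zeta_n^i$. To hit the stated form on the nose you should use representatives with $\sigma_i(Z)=\zeta_{2n}^{i}Z$ (these still give $n$ distinct cosets of $I_{\infty,n}$, since membership in $I_{\infty,n}$ is detected by the action on $Z^2$); then $\sigma_i^{-1}\sigma\sigma_j$ acts on $Z$ by $\zeta_{2n}^{j-i+i_0}$, which lies in $I_{\infty,n}$ iff $i-j\equiv i_0\pmod n$, and the same choice makes $\sigma_{i,j}(\sigma)(W)-W=\zeta_{2n}^{2(j+i_0)}a_0=\zeta_n^i a_0$ under the congruence, producing the $\zeta_n^i$ in the statement. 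Of course, what your representatives compute is conjugate (hence isomorphic) to the stated matrix form, so the error is recoverable, but as written you claim a matrix form your computation does not yield.

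A smaller point: the Kummer/tame character (and the exponent $n+1$) is not really pinned down by $\det\rho=1$ -- the determinant of an induced representation only constrains $\chi$ through the transfer map, which is far from determining it. What actually fixes the tame character is the explicit description from \cite[10.4.5]{KatzGKM} and \cite[Lemma 4.9]{FKMSBilinearKS} (your requirement (ii)); the paper cites these directly rather than reverse-engineering the sign from $\det$ and $\Swan_\infty$. If you lead with the cited description $\Lc_{\chi_2}^{n+1}\otimes\Lc_{\psi(xn)}$, requirements (i) and (iii) become consistency checks rather than determining conditions, which is cleaner.
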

\begin{remark}
  Assuming that $k$ contains a $n$th root of unity is not a constraint for our purpose. Indeed, if $L/k$ is a finite extension, we have a commutative diagram
\begin{equation*}
  \xymatrix@R=0.4cm{
    I_{\infty,L}=I_{\infty,k} \ar[r]&\pi_{1,L}^\geom=\pi_{1,k}^\geom\ar[dl]\ar[dr]&\\
    \pi_{1,L}\ar[dr]\ar[rr]&&\pi_{1,k}.\ar[dl]\\
    &\GL_n(\F_\lambda)&
  }
\end{equation*}
\end{remark}
\begin{proof}
  By \cite[10.4.5]{KatzGKM} and \cite[Lemma 4.9]{FKMSBilinearKS}, the representation of $I_\infty$ corresponding to $\Klc_n$ is isomorphic to
\[[x\mapsto x^n]_*\left(\Lc_{\chi_2}^{n+1}\otimes \Lc_{\psi(xn)}\right),\]
where $\chi_2$ is the character of order $2$ of $\overline\F_p^\times$ and $\psi(x)=e(\tr(x)/p)$. In other words, it is isomorphic to
\[\Ind_{I_{\infty,n}}^{I_\infty}\left(\Lc_{\chi_2}^{n+1}\otimes \Lc_{\psi(xn)}\right),\]
where $I_{\infty,n}$ is the unique subgroup of index $n$ in $I_\infty$ (see \cite[1.13]{KatzGKM}).

  \begin{equation*}
  \xymatrix@R=0.5cm{
    &k(Y)(Z,W)=k(Z,W)\ar@{-}[dl]_{k}\ar@{-}[dr]^{\Z/2}&\\
    k(Y)(Z)\ar@{-}[dr]_{\Z/2}&&k(Y)(W)\ar@{-}[dl]^{k}\\
    &k(Y)\ar@{-}[d]^{\Z/n}&\\
    &k(T)=k(Y^n)&
  }
\end{equation*}

The extension $k(Z,W)/k(T)$ is Galois, and we have a split exact sequence
\begin{equation*}
  \xymatrix@R=0.5cm@C=0.25cm{
    0\ar[r]&k\ar@{-}[d]_{\rotatebox{90}{$\cong$}}\ar[r]&G\ar@{=}[d]\ar[r]&\Z/2n\ar@{-}[d]_{\rotatebox{90}{$\cong$}}\ar[r]&0\\
    &\Gal(k(Z,W)/k(Z))&\Gal \left(k(Z,W)/k(T)\right)&\Gal(k(Y)(Z)/k(T)),&
  }  
\end{equation*}
so an isomorphism
\begin{eqnarray*}
  k\rtimes \Z/2n&\to&G=\Gal \left(k(Z,W)/k(T)\right)\\
  (a,i)&\mapsto&\sigma(a,i)
\end{eqnarray*}
  where $\sigma(a,i)$ is such that $W\mapsto \zeta_{2n}^{2i}(W+a)$ and $Z\mapsto \zeta_{2n}^{i}Z$. For every $(a,i)\in G$, there exists an element of $I_{\infty,n}$ extending $\sigma(a,i)$, that we will again denote by $\sigma(a,i)$.
  
We have $I_{\infty,n}=I_\infty\cap\pi_{1,q}^{(n)}$, where $\pi_{1,q}^{(n)}=\Gal(\closure{K(T)}/K(Y))$ is the subgroup of index $n$ in $\pi_{1,q}$. Indeed,
\begin{eqnarray*}
  I_\infty/(I_\infty\cap\pi_{1,q}^{(n)})&=&I_\infty\pi_{1,q}^{(n)}/\pi_{1,q}^{(n)}=\pi_{1,q}/\pi_{1,q}^{(n)}\\
  &\cong&\Gal(k(Y)/k(T))\cong\mu_n(k)\cong\Z/n.
\end{eqnarray*}
 Note that $(\sigma_i)_{1\le i\le n}$ is a complete reduced system of representatives of $I_\infty/I_{\infty,n}$, where we abbreviate $\sigma_i=\sigma(0,i)$.

By definition (or properties) of induced representations, a matrix form of the representation $I_\infty\to\GL_n(A)$ evaluated at $\sigma=\sigma(a_0,i_0)\in I_{\infty}$ is then
\[\left(\left(\Lc_{\chi_2}^{n+1}\otimes \Lc_{\psi(xn)}\right)(\sigma_{i,j}(\sigma))\delta_{\sigma_{i,j}(\sigma)\in I_{\infty,n}}\right)_{1\le i,j\le n,}\]
where $\sigma_{i,j}(\sigma)=\sigma_i^{-1}\sigma\sigma_j$. It remains to note that $\sigma_{i,j}(\sigma)\in I_{\infty,n}$ if and only if $2(i-j)\equiv 2i_0\pmod{2n}$, in which case:
\begin{itemize}
\item By definition of the Artin-Schreier representation,
  \[\Lc_{\psi(xn)}(\sigma_{i,j}(\sigma))=\psi((\sigma_{i,j}(\sigma)(W)-W)n),\]
  and $\sigma_{i,j}(\sigma)(W)-W=\zeta_{2n}^{2(j+i_0)}a_0$.
\item By definition of the Kummer representation, if $n$ is even,
  \[\Lc_{\chi_2}^{n+1}(\sigma_{i,j}(\sigma))=\chi_2(\zeta_n^{(j-i+i_0)/2})=(-1)^{\frac{j-i+i_0}{n}}.\]
\end{itemize}
Since the two representations are defined over $\F_\lambda$, the same holds for the isomorphism by the rational canonical form.
\end{proof}

\begin{remark}
  In particular, the image of the representation of $I_\infty$ contains an element conjugate to the (permutation, up to signs) matrix $m$ from Equation \eqref{eq:mu}. Note that $m$ has order $n$ (resp. $2n$) if $n$ is odd (resp. even).
\end{remark}

\subsubsection{Fields generated by traces}

The following will be useful to deal with subfield subgroups; it shows that we still recover conjugacy-invariant arithmetic information (the subfield generated by the traces of the Frobenius) in the geometric monodromy group.
\begin{proposition}\label{prop:fieldsTraces}
  Let $n\ge 2$, let $\lambda$ be an $\ell$-adic valuation on
  \[
      \Oc=
      \begin{cases}
        \Z[\zeta_p]&n\text{ odd or }p\equiv 1\pmod{4}\\
        \Z[\zeta_{4p}]&\text{otherwise, with }\ell\equiv 1\pmod{4}
      \end{cases}
    \]
   \textup{(}see \eqref{eq:fieldDef}\textup{)}, and let $\rho_n: \pi_{1,q}\to\GL_n(\F_\lambda)$ be the representation corresponding to the Kloosterman sheaf $\Klc_n$ of $\F_\lambda$-modules over $\P^1/\F_q$. Then
  \[\F_\ell \left(\Kl_{n,q}(a) : a\in\F_q^\times\right)=\F_\ell \left(\tr\rho_n(I_\infty)\right),\]
  with index $(f,n)$ in $\F_\lambda$.
\end{proposition}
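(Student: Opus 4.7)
The plan is to identify both $K_1:=\F_\ell(\Kl_{n,q}(a):a\in\F_q^\times)$ and $K_2:=\F_\ell(\tr\rho_n(I_\infty))$ with the unique subfield of $\F_\lambda$ of index $(f,n)$. The congruence hypotheses ensure $\sqrt p\in\Oc_\lambda^\times$, so $\F_\lambda=\F_\ell(\bar\zeta_p)$ where $\bar\zeta_p$ denotes the reduction of $\zeta_p$ mod $\lambda$, $\Gal(\F_\lambda/\F_\ell)=\langle\Frob_\ell\rangle$ is cyclic of order $f$ acting by $\bar\zeta_p\mapsto\bar\zeta_p^\ell$, and any subfield is determined by its pointwise stabilizer.

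For $K_2$, after base changing to $k'=k(\zeta_{2n})$ --- which does not affect the $I_\infty$-trace field, by the descent diagram following Proposition~\ref{prop:monodromyInftyExplicit} --- that proposition exhibits the nonzero traces of $\rho_n$ on $I_\infty$ as $\pm T(a_0)$, where
\[
  T(a_0)\;=\;\sum_{i=1}^n\bar\zeta_p^{\,n\,\tr_{k'/\F_p}(a_0\zeta_n^i)},\qquad a_0\in k'.
\]
Since $\ell^j\in\F_p$ and $\tr$ is $\F_p$-linear, $\Frob_\ell^j(T(a_0))=T(\ell^j a_0)$; so $\Frob_\ell^j$ fixes $K_2$ pointwise iff $T(\ell^j a_0)=T(a_0)$ for every $a_0\in k'$. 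The linear independence of distinct nontrivial additive characters of $k'$ then forces the multisets $\{\ell^j\zeta_n^i\}_i$ and $\{\zeta_n^i\}_i$ to coincide, i.e., $\ell^j\in\mu_n\cap\F_p^\times$, equivalently $\ell^{jn}\equiv 1\pmod p$, i.e., $(f/(f,n))\mid j$. Thus the stabilizer has order $(f,n)$ and $[\F_\lambda:K_2]=(f,n)$.

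For $K_1$, the substitution $x_i\mapsto\ell^j x_i$ in the defining sum combined with the Galois action on the normalization $q^{(n-1)/2}$ gives
\[
  \Frob_\ell^j\bigl(\Kl_{n,q}(a)\bigr)\;=\;\chi(j)\,\Kl_{n,q}(\ell^{jn}a),
\]
where $\chi$ is trivial when $n$ is odd or $q$ is a square in $\F_\ell$, and otherwise is a quadratic character recording $\Frob_\ell$'s action on $\sqrt p$. Reasoning analogously and invoking the non-degeneracy of the Kloosterman function (no nontrivial scalar-multiplicative self-symmetry $\Kl_{n,q}(ca)=\text{const}\cdot\Kl_{n,q}(a)$ for $c\ne 1$), the stabilizer of $K_1$ is again $\{j:(f/(f,n))\mid j\}$, of order $(f,n)$. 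Since $\Gal(\F_\lambda/\F_\ell)$ is cyclic, both $K_1$ and $K_2$ are the unique subfield of index $(f,n)$, so $K_1=K_2$.

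The main obstacle is the $K_1$ analysis: first the non-degeneracy step that rules out spurious stabilizers (cleanly handled in $K_2$ by orthogonality of additive characters, but less direct for $K_1$ and ultimately relying on the fullness of the monodromy of $\Klc_n$), and second the careful bookkeeping of the twist $\chi$. The congruence hypothesis $\ell\equiv 1\pmod 4$ (in the mixed case) is precisely what ensures $\chi$ is trivial on the subgroup $\{j:(f/(f,n))\mid j\}$, so that its stabilizer matches that of $K_2$. The remaining ingredients --- the descent from $k'$ back to $k$, the uniqueness of the subfield of index $(f,n)$ in the cyclic extension $\F_\lambda/\F_\ell$, and the triviality of $\chi$ for $n$ odd --- are routine.
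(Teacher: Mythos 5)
Your $K_2$ analysis mirrors the paper's: both use the explicit $I_\infty$-trace formula from Proposition~\ref{prop:monodromyInftyExplicit} and an orthogonality argument (you use linear independence of additive characters; the paper sums the trace against a multiplicative character $\Lambda$ and evaluates it as a Gauss sum $n\,\delta_{\Lambda|_{\mu_n}=1}G_n(\Lambda)$, which is nonzero). These are essentially the same argument and both correctly pin down $\Gal(\F_\lambda/K_2)=G[n]$.

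The genuine gap is in the $K_1$ step. The paper does \emph{not} attempt to compute $\Gal(\F_\lambda/K_1)$ directly. It only shows the easy inclusion $G[n]\le\Gal(\F_\lambda/L_1)$ by the substitution $x_i\mapsto u_\sigma x_i$, and then gets the reverse containment for free: by Chebotarev density, the traces of Frobenii determine the traces of \emph{all} elements of $\pi_{1,q}$, so $\tr\rho_n(I_\infty)\subseteq L_1$, i.e.\ $L_2\subseteq L_1$ and hence $\Gal(\F_\lambda/L_1)\le\Gal(\F_\lambda/L_2)=G[n]$. Your route instead tries to show directly that $\Kl_{n,q}(a)=\chi(j)\Kl_{n,q}(\ell^{jn}a)$ for all $a$ forces $\ell^{jn}\equiv 1\pmod p$; as you yourself flag, this needs a ``non-degeneracy'' property of the mod-$\ell$ Kloosterman function whose only evident source is the fullness of the mod-$\ell$ monodromy. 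But Proposition~\ref{prop:fieldsTraces} is an \emph{input} to the proof of Theorem~\ref{thm:KlcIntegralMono}, so invoking the conclusion of that theorem here is circular. (One could try to establish the required non-degeneracy unconditionally via the Gauss-sum expansion of Kloosterman sums, but that is exactly the work the Chebotarev step is designed to avoid.) Additionally, your assertion that the hypothesis $\ell\equiv 1\pmod 4$ makes the quadratic twist $\chi$ trivial on the subgroup $\{j:(f/(f,n))\mid j\}$ is not justified: that hypothesis ensures $\zeta_4\in\F_\ell$ so that $\sqrt p\in\F_\lambda$, but says nothing about whether $p$ is a square in $\F_\ell$, which is what would be needed for $\left(\frac{\ell}{p}\right)^j$-type factors to vanish.
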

\begin{proof}
    Under the hypotheses, $\F_\lambda=\F_\ell(\zeta_{p})$ (since $\zeta_{4}\in\F_\ell$ if $\ell\equiv 1\pmod{4}$) and $f:=[\F_\lambda:\F_\ell]=\ord(\ell\in\F_p^\times)$. Fisher \cite[Proposition 2.8]{Fisher95} showed that for $\Q(\zeta_p)$-valued Kloosterman sums,
    \[\Q\left(q^{\frac{n-1}{2}}\Kl_{n,q}(a) : a\in\F_q^\times\right)=\Q(\zeta_p)^{\Gal(\Q(\zeta_p)/\Q)[n]}.\]
    We proceed similarly to show that for $G=\Gal(\F_\ell(\zeta_p)/\F_\ell)$,
    \[
      \begin{rcases}
        L_1:=\F_\ell \left(\Kl_{n,q}(a) : a\in\F_q^\times\right)\\
        L_2:=\F_\ell \left(\tr\rho_n(I_\infty)\right)
      \end{rcases}
      =\F_\ell(\zeta_p)^{G[n]}.
    \]
    For $\sigma\in G$, let $u_\sigma\in\F_p^\times$ be such that $\sigma(\zeta_p)=\zeta_p^{u_\sigma}$, and note that for $a\in\F_q^\times$,
    \begin{eqnarray*}
      \sigma(\Kl_{n,q}(a))&=&\Kl_{n,q}(au_\sigma^n)\\
      \sigma \left(\tr\rho(\sigma(a,0))\right)&=&\tr\rho(\sigma(au_\sigma,0))=\sum_{i=1}^n e \left(\frac{n\tr_{\F_q/\F_p}(au_\sigma\zeta_n^i)}{p}\right),
    \end{eqnarray*}
    where $\sigma(a,0)$ is as defined in the proof of Proposition \ref{prop:monodromyInftyExplicit}. Hence, $G[n]\le\Gal(\F_\ell(\zeta_p)/L_i)$ for $i=1,2$.

    On the other hand, let us assume that $\sigma\in\Gal(\F_\ell(\zeta_p)/L_2)$. For every character $\Lambda: \F_q^\times\to\overline\F_\ell^\times$, we define
    \begin{eqnarray*}
      S_2(\Lambda)&=&\sum_{a\in \F_q^\times}\tr\rho(\sigma(a,0))\Lambda(a)\\
                &=&\sum_{i=1}^n\overline\Lambda(\zeta_n^i) G_n(\Lambda)=n\delta_{\Lambda\mid_{\mu_n}=1}G_n(\Lambda),
    \end{eqnarray*}
      where $G_n(\Lambda):=\sum_{a\in \F_q^\times}e \left(\frac{n\tr_{\F_q/\F_p}(a)}{p}\right)\Lambda(a)\neq 0$ since $G_n(\Lambda)G_{-n}(\overline\Lambda)=q\in\F_\ell^\times$. Then, since $\sigma\mid_{L_2}=\id$, we have $S_2(\Lambda)=\overline\Lambda(u_\sigma)S_2(\Lambda)$, which yields that $\Lambda(u_\sigma)=1$ whenever $\Lambda\mid_{\mu_n}=1$. Thus, $\sum_{\Lambda\in \widehat{\F_q^\times/\mu_n}}\Lambda(u_\sigma)=(q-1)/n$, so that $u_\sigma\in\mu_n$, i.e. $\sigma\in G[n]$.

      Finally, $\Gal(\F_\ell(\zeta_p)/L_1)\le \Gal(\F_\ell(\zeta_p)/L_2)$ by Chebotarev's density theorem (see \cite[I.2.2, Corollary 2a]{Serre89}).
      % \footnote{One may also show that $\Gal(\F_\ell(\zeta_p)/L_1)\le G[n]$ by considering $S_1(\Lambda)=\sum_{a\in \F_q^\times}\Kl_{n,q}(a)\Lambda(a)$ as in the previous paragraph.}
      
    The claim on the index follows from $|G[n]|=|(\Z/f)[n]|=(n,f)$.
  \end{proof}
  \begin{equation*}
  \xymatrix{
    \Q(\zeta_{p})\ar@{-}[d]^{G[n]}_{(p-1,n)}&\Z[\zeta_{p}]\ar@{-}[dd]&\lf\ar@{-}[dd]&\F_\lf=\F_\ell(\zeta_p)\ar@{-}[d]_{(f,n)}^{G_\ell[n]}\\
    \Q(\Kl_{n,q}(a) : a\in\F_q)\ar@{-}[d]_{\frac{p-1}{(p-1,n)}}&&&\F_\ell(\Kl_{n,q}(a) : a\in\F_q)\ar@{-}[d]_{\frac{f}{(f,n)}}\\
    \Q&\Z&\ell\neq p&\F_\ell
  }
\end{equation*}

\section{Proof of Theorem \ref{thm:KlcIntegralMono} from Theorem \ref{thm:classification}}

Under the assumptions of Theorem \ref{thm:KlcIntegralMono}, assume that there exists a maximal (proper) subgroup $G_\geom(\Klc_n)\le H\le\SL_n(\F_\lambda)$ if $n$ is odd (resp. $\Sp_n(\F_\lambda)$ if $n$ is even). By Proposition \ref{prop:propertiesKl}, $H$ satisfies the hypotheses of Theorem \ref{thm:classification}.

It remains to show that the four cases of the conclusion of the latter are excluded:
\begin{itemize}[leftmargin=*]
\item[(1)] For $T=\SO_n(\F_\lambda)$ ($n$ odd), we proceed as in \cite[11.5.2]{KatzGKM}. We have
\[H=N_{\SL_n(\F_\lambda)}(T)\cong T\times\mu_n(\F_\lambda).\]
Over $\C$, this follows from the fact that $T$ contains no nontrivial scalars and that all automorphisms are inner by Propositions \ref{prop:OutLieGroup} and \ref{prop:AutDynkin}. In the finite case, we must take into account diagonal and field automorphisms (see Proposition \ref{prop:OutLieGroupFinite} below): the result is given in \cite[(2.6.2), Cor. 2.10.4, Prop. 2.10.6]{KleiLieb90}.

Let $\rho: \pi_{1,q}\to\GL_n(\F_\lambda)$ be the representation corresponding to $\Klc_n$. By \ref{item:KltotallyWildInftyb} of Proposition \ref{prop:propertiesKl}, considering the character $\rho(I_\infty)\to H\to \mu_n(\F_\lambda)$, we must have $\rho(P_\infty)\le T$, which contradicts \ref{item:Klpairingodd} of the same proposition. Indeed, $T$ preserves a nonsingular symmetric bilinear form.\label{page:preserveSymBilForm}
\end{itemize}

For \ref{item:classification3} and \ref{item:classification4}, where subfields appear, we use Proposition \ref{prop:fieldsTraces}: under the hypothesis $([\F_\lambda:\F_\ell], n)=1$, we have
\begin{equation}
  \label{eq:fieldsTraces}
  \F_\lambda=\F_\ell(\tr\rho_n(I_\infty))\le\F_\ell(\tr(G_\geom)).
\end{equation}

\begin{lemma}\label{lemma:normalizers}
  For $L/k$ an extension of finite fields and $G\in\{\SL_n,\Sp_n\}$,
  \[[n]\No_{G(L)}(G(k))\subset G(k),\]
  where $[n]: G(L)\to G(L)$ is defined by $g\mapsto g^n$.
\end{lemma}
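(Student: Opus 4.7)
The plan is to work inside the ambient algebraic group $G(\overline{\mathbb{F}_p})$ and exploit the Frobenius descent picture. Let $F := F_k$ denote the Frobenius endomorphism of $G(\overline{\mathbb{F}_p})$ whose fixed points are $G(k)$, and let $s = [L:k]$ so that $G(L) = G(\overline{\mathbb{F}_p})^{F^s}$. Take $g \in N_{G(L)}(G(k))$; the goal is to show $g^n \in G(\overline{\mathbb{F}_p})^{F} = G(k)$.

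The key step is to introduce the element $z := g^{-1} F(g) \in G(L)$ and show it centralizes $G(k)$. Indeed, for any $h \in G(k)$, the element $ghg^{-1}$ lies in $G(k)$ and is therefore $F$-fixed. Applying $F$ and using $F(h) = h$ yields $F(g)\,h\,F(g)^{-1} = g\,h\,g^{-1}$, which rearranges to $zh = hz$.

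Next I would identify this centralizer with the center of $G(L)$. Since $G(k) \supseteq G(\mathbb{F}_p)$ acts absolutely irreducibly on the natural $n$-dimensional representation (it is even transitive on nonzero vectors), Schur's lemma forces any element of $\mathrm{GL}_n(L)$ centralizing $G(k)$ to be a scalar. Intersecting with $G(L)$ gives $C_{G(L)}(G(k)) = Z(G(L))$, which is $\mu_n(L)\cdot I$ for $G = \mathrm{SL}_n$ and $\{\pm I\}$ for $G = \mathrm{Sp}_n$ (here the $\mathrm{Sp}_n$ computation uses $\lambda^2 = 1$ for a scalar $\lambda I$ to preserve the symplectic form).

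To conclude, observe that in both cases $Z(G(L))$ has exponent dividing $n$: tautologically for $\mathrm{SL}_n$, and because $n$ is even for $\mathrm{Sp}_n$. Since $z$ is central it commutes with $g$, so
\[
F(g^n) = F(g)^n = (gz)^n = g^n z^n = g^n,
\]
whence $g^n \in G(k)$, as required. The only substantive step is the centralizer computation; everything else is a short manipulation of the Lang-style identity $F(g) = gz$. Because $Z(G)$ is so small in our two cases, no delicate counting of diagonal or field automorphisms of $G(k)$ is needed, which is the principal simplification of this approach.
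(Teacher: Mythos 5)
Your proof is correct and follows essentially the same strategy as the paper's: show that the ``Lang-type'' element ($z=g^{-1}F(g)$ in your notation, $\sigma(g)g^{-1}$ in the paper's) centralizes $G(k)$, identify the centralizer with the center, and then observe that the center has exponent dividing $n$ so that $F(g^n)=g^n$. The only substantive difference is how you establish $C_{G(L)}(G(k))=Z(G(L))$: the paper does this concretely by exhibiting enough elementary matrices in $G(k)$ to force any commuting matrix to be scalar, whereas you invoke absolute irreducibility of the natural module plus Schur's lemma. Both are standard and the argument goes through either way, but I would flag one small inaccuracy in your justification: transitivity on nonzero vectors does \emph{not} by itself imply absolute irreducibility (the nonsplit torus $\F_{p^2}^\times\hookrightarrow\GL_2(\F_p)$ is transitive on $\F_p^2\setminus\{0\}$ yet has endomorphism ring $\F_{p^2}$). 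Absolute irreducibility of the natural representation of $\SL_n(\F_p)$ and $\Sp_n(\F_p)$ is of course true, but the clean way to see it is exactly the elementary-matrix computation the paper uses, which then makes the Schur's lemma detour unnecessary.
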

\begin{proof}
  Let $\sigma\in\Aut_k(L)$ be the Frobenius. If $g\in\No_{G(L)}(G(k))$, we have $ghg^{-1}\in G(k)$ for all $h\in G(k)$, i.e. $\sigma(ghg^{-1})=ghg^{-1}$ (applying $\sigma$ entry-wise), so $\sigma(g)h\sigma(g)^{-1}=ghg^{-1}$, which shows that $\sigma(g)g^{-1}\in C_{G(L)}(G(k))$. Recall that $\SL_n(k)$ is generated by elementary matrices $e_{i,j}$ ($i\neq j$) and that $\Sp_n(k)$ (with the usual form) contains the elementary matrices $e_{i,\sigma(i)}$ where $\sigma(i)=i+n-1\pmod{2n}$. This yields $C_{G(L)}(G(k))\le Z(G(L))$. Therefore, $\sigma(g)=\lambda_gg$ with $\lambda_g\in\mu_n(L)$. Since $\sigma(g^n)=\sigma(g)^n=g^n$, we get that $g^n\in G(k)$. See also \cite[4.5.3--4]{KleiLieb90}.
\end{proof}

\begin{itemize}[leftmargin=*]
  \item[(2)] If $H=N_{G(\F_\lambda)}(G(\F'))$ with $G\in\{\SL_n,\Sp_n\}$, then Lemma \ref{lemma:normalizers} shows that
  \[\tr\rho(\sigma(an,0))\in\F'\]
  for every $a\in \F_q$. Since $(n,p)=1$, it follows that $\tr\rho(I_\infty)\subset\F'$, which contradicts \eqref{eq:fieldsTraces} since $\F'$ is a proper subfield of $\F_\lambda$.
\item[(3)] Finally, we use a combination of the techniques used in \ref{item:classification1}--\ref{item:classification3} to handle the case of $H=N_{\SL_n(\F_\lambda)}(\SU_n(\F'))$. By \cite[Proposition 4.8.5]{KleiLieb90},
  \begin{eqnarray*}
    H=\mu_n(\F_\lambda)\SU_n(\F')&\cong& \frac{\SU_n(\F')\times\mu_n(\F_\lambda)}{\SU_n(\F')\cap\mu_n(\F_\lambda)}\\
    &=&\frac{\SU_n(\F')\times\mu_n(\F_\lambda)}{\mu_{(n,1+|\F'|)}(\F_\lambda)}.
  \end{eqnarray*}
  By Proposition \ref{prop:propertiesKl} \ref{item:KltotallyWildInftyb} applied to the representation
  \[H\to\mu_{n/(n,1+|\F'|)}(\F_\lambda)\]
  restricted to $\rho(I_\infty)$, we have $\rho(P_\infty)\le\SU_n(\F')$. In other words, $P_\infty$ leaves invariant the sesquilinear form on $(\F_\lambda)^n$ associated to the involution $\sigma\in\Aut(\F_\lambda)$, $x\mapsto x^{|\F'|}$.

  As in \ref{item:Klpairingodd} of Proposition \ref{prop:propertiesKl} (and its proof in \cite[4.1.5--4.1.8]{KatzGKM}), this yields an isomorphism of $P_\infty$-representations between $\rho$ and $\sigma(D(\rho))$. Equivalently, there exists $A\in\GL_n(\F_\lambda)$ such that
  \[A\rho(g)A^{-1}=\sigma\left(\rho(g^{-1})^t\right)\text{ for all }g\in P_\infty,\]
  so $A\rho(P_\infty)A^{-1}\le\GL_n(\F')$, which contradicts again \eqref{eq:fieldsTraces}.
\end{itemize}
This concludes the proof of Theorem \ref{thm:KlcIntegralMono} assuming Theorem \ref{thm:classification}. \qed

\begin{remarks}
  The conditions $p\nmid n$ and $([\F_\lambda:\F_\ell],n)=1$ in Theorem \ref{thm:KlcIntegralMono} are required to exclude subfield subgroups. In particular:
  \begin{itemize}
  \item If $p\mid n$, the elements produced with Proposition \ref{prop:monodromyInftyExplicit} lie in $\SL_n(\F_\ell)$.
  \item If $e=([\F_\lambda:\F_\ell],n)>1$, Proposition \ref{prop:fieldsTraces} shows that we cannot exclude that $G_\geom$ lies in $N_{G(\F_\lambda)}(G(\F'))$, for $\F'<\F_\lambda$ a proper subfield of index $e$ (where $G=\SL_n$ if $n$ is odd, $\Sp_n$ otherwise).
  \end{itemize}
  Moreover, in the case $p\equiv 3\pmod{4}$ and $\ell\equiv 2\pmod{3}$, we cannot exclude that $G_\geom$ is defined over $\F_\ell(\zeta_p)<\F_\ell(\zeta_{4p})=\F_\lambda$. Indeed, $\F_\lambda$ contains $\zeta_4$ because of the Tate twist, which is trivial on the geometric fundamental group.

  In \cite{Hall08} (and applications in \cite{KowLS06} and \cite{KowLargeSieve08}), subfield subgroups do not enter the picture because only sheaves of $\Z_\ell$-modules are considered, as opposed to reductions of sheaves of $\Z[\zeta_{4p}]_\lambda$-modules for Kloosterman sheaves.
\end{remarks}

\section{Proof of the classification Theorem \ref{thm:classification}}

\subsection{Reminder on the classification of maximal subgroups of classical groups}\label{subsec:reviewClassificationMax}
As already mentioned, an essential input to prove Theorem \ref{thm:classification} is the classification of maximal subgroups of classical groups, which originated from the work of Aschbacher \cite{Asch84}, and was then expanded by Kleidman-Liebeck \cite{KleiLieb90}. Another proof was given by Liebeck-Seitz \cite{LiebSei98} through an analogous result over an algebraically closed field and using descent (as in the treatment of finite groups of Lie type with Steinberg isomorphisms). A good exposition of these results can be found in \cite[II.18.1, III.27--28]{TesMal11}.\\

In the remaining of this section, we survey this classification in a way adapted to our needs.

  \begin{theorem}[{\cite{Asch84}, \cite{LiebSei98}}]\label{thm:classifMaxSubgroups}
  Let $\F_\lambda$ be a finite field of odd characteristic $\ell$ and for $n\ge 2$, let $G=\SL_n(\F_\lambda)$ or $G=\Sp_n(\F_\lambda)$ \textup{(}$n$ even\textup{)}. We denote by $\pi:\GL_n(\overline\F_\lambda)\to\PGL_n(\overline\F_\lambda)$ the projection. If $H$ is a maximal (proper) subgroup of $G$, then either:
  \begin{enumerate}
  \item\label{item:classifMaxSubgroups1} $H$ belongs to one of the classes $\Cc_1, \dots, \Cc_7$ described below, or
  \item\label{item:classifMaxSubgroups2} $\pi(H)$ is \emph{almost simple}: there exists a simple group $S$ such that
    \[S\cong\Inn(S)\normal \pi(H)\le\Aut(S).\]
    Moreover, $H$ admits a unique normal subgroup $T$ such that $\pi(T)=S$, and the action of $T\normal H\le \SL_n(\F_\lambda)$ on $\overline\F_\lambda^n$ is absolutely irreducible. If $G=\SL_n(\F_\lambda)$, then $T$ preserves no nondegenerate bilinear or unitary form on $\F_\lambda^n$.
  \end{enumerate}
\end{theorem}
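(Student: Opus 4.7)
The plan follows Aschbacher's strategy of a structural dichotomy based on what geometric structures on the natural module $V = \F_\lambda^n$ can be stabilized by $H$. I would first describe the classes $\Cc_1, \dots, \Cc_7$ as the $G$-stabilizers of, respectively: a proper subspace of $V$ (reducible subgroups); a direct-sum decomposition $V = \bigoplus V_i$ with $\dim V_i$ constant (imprimitive irreducible subgroups); an extension-field structure on $V$ of prime degree; a nontrivial tensor factorization $V = V_1 \otimes V_2$; an $\F'$-rational form for $\F' \subsetneq \F_\lambda$ a prime-index subfield; an absolutely irreducible symplectic-type $r$-subgroup with $r \neq \ell$; and a tensor-power factorization $V = W^{\otimes k}$. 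One checks that each class is invariant under $G$-conjugation and that its generic members, subject to natural non-degeneracy conditions, are maximal.

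Given a maximal $H \le G$, I would then proceed by successive dichotomies. If $H$ is reducible, then $H \in \Cc_1$; if $H$ is irreducible but imprimitive, $H \in \Cc_2$; so assume $H$ is primitive and irreducible. Consider the generalized Fitting subgroup $F^*(H) = F(H)\, E(H)$. Cases are extracted from its structure: if $F(H)$ contains a non-central symplectic-type $r$-subgroup with $r \neq \ell$, then $H \in \Cc_6$; if $H$ acts semilinearly over a proper extension field, then $H \in \Cc_3$; if $E(H)$ is a central product of isomorphic quasisimple factors permuted by $H$, then primitivity and Clifford theory force a tensor or tensor-power decomposition, so $H \in \Cc_4$ or $\Cc_7$; if $H$ is conjugate into $G(\F')$ for a proper subfield $\F' \subsetneq \F_\lambda$, then $H \in \Cc_5$. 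Everything remaining must have $E(H)$ a single quasisimple group $T$ with $F(H)$ central and $T$ absolutely irreducible on $V$; setting $S := \pi(T)$, the almost-simple conclusion (2) follows. Uniqueness of $T$ is intrinsic to $E(H)$, and in the $\SL_n$-case the non-preservation of nondegenerate bilinear or unitary forms by $T$ is forced by maximality of $H$: otherwise $H$ would sit inside a proper classical subgroup $N_{\SL_n}(\Sp_n)$, $N_{\SL_n}(\SO_n)$, or $N_{\SL_n}(\SU_n)$.

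The main technical obstacle is precisely this last case: proving that the absence of all $\Cc_i$-structure actually forces $F^*(H)$ to be quasisimple. This rests on a careful and exhaustive Clifford-theoretic bookkeeping of the possible central products in $E(H)$ together with O'Nan--Scott-type control of primitive permutation actions, and constitutes the bulk of Aschbacher \cite{Asch84} and its elaboration in Kleidman-Liebeck \cite{KleiLieb90}. An alternative route due to Liebeck-Seitz \cite{LiebSei98} proceeds inside the ambient simple algebraic group $\mathbf{G}$ over $\overline{\F_\lambda}$: one invokes the classification of maximal closed subgroups of $\mathbf{G}$ (via Dynkin and Seitz) and descends to the Frobenius-fixed points using the Lang-Steinberg theorem, so that the $\Cc_i$ correspond to rational forms of positive-dimensional maximal closed subgroups and the almost-simple class matches finite exotic closed subgroups of $\mathbf{G}$.
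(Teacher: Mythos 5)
The paper does not prove this theorem: it is imported wholesale from the literature, and the paper's entire ``proof'' is the single sentence ``This is a combination of Theorems 1 and 2 from \cite{LiebSei98}.'' Your sketch of Aschbacher's strategy (Clifford-theoretic analysis of $F^*(H)$) and your brief description of the Liebeck--Seitz descent from the algebraic group are therefore expositions of the two cited sources rather than a proof that needs checking against anything in the paper. Both routes you describe are genuinely the ones invoked, and your high-level outline of the structural dichotomy is sound.

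That said, there is a concrete bookkeeping error: your list of classes $\Cc_1,\dots,\Cc_7$ does not match the paper's. You give, in order, subspace stabilizers, direct-sum decompositions, extension-field structures, tensor factorizations, subfield subgroups, symplectic-type $r$-subgroups, and tensor-power factorizations. This is (a permutation of) the Kleidman--Liebeck list truncated to seven entries, and it omits the classical-form normalizers $N_G(\Sp_n)$, $N_G(\SO_n)$, $N_G(\SU_n)$ (Kleidman--Liebeck's $\Cc_8$) as a geometric class; you instead fold those into your analysis of case (2). The paper's seven classes are organized differently, following Liebeck--Seitz: $\Cc_2$ and $\Cc_3$ split imprimitive decompositions by type (orthogonal vs.\ totally singular), tensor products and tensor powers are merged into a single $\Cc_4$, the symplectic-type subgroups are $\Cc_5$, the classical-form normalizers $N_{\SL_n}(\SO_n)$ and $N_{\SL_n}(\SU_n)$ are $\Cc_6$, and subfield subgroups are $\Cc_7$; there is no stand-alone extension-field class, since in the Liebeck--Seitz picture these arise as $\bs M^{\Frob}$ for a stabilizer $\bs M$ of a decomposition with the Frobenius permuting the factors. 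Since the theorem as stated in the paper asserts membership in ``the classes $\Cc_1,\dots,\Cc_7$ described below,'' your argument needs to produce the paper's classes, not the Kleidman--Liebeck ones, and it currently does not account for $\Cc_6$ on the geometric side.

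One further point: the paper's conclusion (2) also asserts uniqueness of the normal subgroup $T$ with $\pi(T)=S$ and absolute irreducibility of its action on $\overline{\F}_\lambda^n$, which in \cite{LiebSei98} come specifically from their Theorem 2 (as opposed to the dichotomy itself in Theorem 1). Your sketch gestures at uniqueness (``intrinsic to $E(H)$'') but does not address absolute irreducibility of $T$, which requires an argument separate from the O'Nan--Scott-type analysis you describe.
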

\begin{proof}
  This is a combination of Theorems 1 and 2 from \cite{LiebSei98}.
\end{proof}

We now recall the definition of the classes $\Cc_1, \dots, \Cc_7$, along with some convenient properties we will use.\\

Let $V=\F_\lambda^n$, $\Vb=\overline{\F}_\lambda^n$, and $\Frob\in\Gal(\overline\F_\lambda/\F_\lambda)$ be the \textit{arithmetic} Frobenius $x\mapsto x^{|\F_\lambda|}$. We will write $\Gb=\SL_n(\overline\F_\lambda)$ (resp. $\Sp_n(\overline\F_\lambda)$). Let $\beta$ be the zero bilinear form on $V$ if $G=\SL_n(\F_\lambda)$ or the symplectic form associated to $G$ if $G=\Sp_n(\F_\lambda)$. The classes appearing in \ref{item:classifMaxSubgroups1} of Theorem \ref{thm:classifMaxSubgroups} are the following:
\begin{itemize}[leftmargin=*]
\item Class $\Cc_1$ (subspace stabilizers):
  \[H=\Stab_G(W)\] with $0\neq W\lneq V$ totally singular or nondegenerate with respect to $\beta$. Note that $W\le V$ is an $H$-submodule, so this case does not arise if $H$ acts on $V$ irreducibly.
\item Class $\Cc_2$ (stabilizers of orthogonal decompositions):
  \begin{eqnarray*}
    \Vb&=&\Vb_1\perp\dots\perp \Vb_t\\
    &&(t\ge 2, \  \text{all the }\Vb_i \ \text{isometric}, \ n=\dim(\Vb_1)t),\\
    \bs M&=&\Stab_{\Gb}(\Vb_1\perp\dots\perp \Vb_t),\\
    H&\le&\bs M^{\Frob}.
  \end{eqnarray*}
  In other words, the elements of $\bs M$ are the $g\in\Gb$ such that there exists a permutation $\sigma\in \Sf_t$ with $g\Vb_i=\Vb_{\sigma(i)}$ for all $1\le i\le t$.
\item Class $\Cc_3$ (stabilizers of totally singular decompositions): if $G=\Sp_n(\F_\lambda)$,
  \begin{eqnarray*}
    \Vb&=&\Vb_1\oplus \Vb_2 \hspace{0.3cm} (\Vb_i\text{ maximal totally isotropic: }\beta|_{\Vb_i}=0),\\
    \bs M&=&\Stab_{\Gb}(\Vb_1\oplus \Vb_2),\\
    H&\le&\bs M^{\Frob}.
  \end{eqnarray*}
  In other words, the elements of $M$ are the $g\in\Gb$ such that there exists a permutation $\sigma\in \Sf_2$ with $g\Vb_i=\Vb_{\sigma(i)}$ for $i=1,2$. In particular, $\dim(\Vb_1)=\dim(\Vb_2)=n/2$.
\item Class $\Cc_4$ (stabilizers of tensor product decompositions):
  \begin{eqnarray*}
    \Vb&=&\Vb_1\otimes\dots\otimes \Vb_t \hspace{0.3cm} (\dim \Vb_i\ge 2, \ t\ge 2),\\
    \bs L&=&\Gb(\Vb_1)\times\dots\times \Gb(\Vb_t)\text{, acting on }\Vb\text{ by tensor product},\\
    \bs M&=&N_{\GL_n(\overline\F_\lambda)}(\bs L)\cap \Gb,\\
    H&\le&\bs M^{\Frob},
  \end{eqnarray*}
  with $t=2$ if the $\bs V_i$ are not mutually isomorphic, where we write $\Gb(\Vb_i)$ for the classical group of type $\Gb$ on the vector space $\Vb_i$. Note that $n=\dim(\Vb_1)\dots\dim(\Vb_t)$, and $n=\dim(\Vb_1)^t$ if the $\Vb_i$ are mutually isomorphic. We have
\[\pi\left(N_{\GL_n(\overline\F_\lambda)}(\bs L)\cap \Gb\right)\le N_{\PGL_n(\overline\F_\lambda)}(\pi(\bs L))\cap \pi(\Gb)=\bs N.\]
Since $\pi(\Gb)$ has trivial center, there is a morphism from $\bs N$ to
\begin{eqnarray*}
  \Aut(\pi(\bs L))&=&\Aut\left(\pi(\Gb(\Vb_1))\times\dots\times\pi(\Gb(\Vb_t))\right)\\
  &\cong&
  \begin{cases}
    \Aut\left(\pi(\Gb(\Vb_1))\right)\times \Aut\left(\pi(\Gb(\Vb_2))\right)&:t=2\\
    \Aut\left(\pi(\Gb(\Vb_1))\right)\wr \Sf_t&:t>2
  \end{cases}\\
  &\to&  \begin{cases}
    \Out\left(\pi(\Gb(\Vb_1))\right)\times \Out\left(\pi(\Gb(\Vb_2))\right)&:t=2\\
    \Out\left(\pi(\Gb(\Vb_1))\right)\wr \Sf_t&:t>2,
  \end{cases}
\end{eqnarray*}
with kernel isomorphic to $\pi(\bs L)\cap\pi(\Gb)$. The isomorphism on the second line follows from:
\begin{lemma}
  Let $G_1, \dots, G_t$ be nonabelian simple groups. Then $\Aut(G_1\times\dots\times G_t)$ is isomorphic to
  \[\Aut(G_1)\times\dots\times\Aut(G_t)\]
  if there is no isomorphism among the $G_i$, respectively $\Aut(G_1)\wr \Sf_t$ if the $G_i$ are mutually isomorphic.
\end{lemma}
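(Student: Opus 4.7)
The plan is to classify the minimal normal subgroups of $G := G_1 \times \cdots \times G_t$ and then read off the automorphism group from how it permutes them. First I would show that the minimal normal subgroups of $G$ are precisely the canonical factors $N_i := 1 \times \cdots \times G_i \times \cdots \times 1$ for $1 \le i \le t$. Indeed, for any $N \normal G$, each projection $\pi_i(N)$ is normal in $G_i$, hence (by nonabelian simplicity) either trivial or all of $G_i$. Setting $I = \{i : \pi_i(N) = G_i\}$, the inclusion $[N, N_i] \le N \cap N_i$ combined with the perfectness $[G_i, G_i] = G_i$ forces $N_i \le N$ for each $i \in I$, so $N = \prod_{i \in I} N_i$; minimality gives $|I| = 1$.

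Next, any $\varphi \in \Aut(G)$ permutes the $N_i$'s, and since $N_i \cong G_i$, it can only exchange $N_i$ with $N_j$ when $G_i \cong G_j$. When no two $G_i$ are isomorphic, $\varphi$ must fix every $N_i$ setwise, which yields a restriction homomorphism
\[
\Aut(G) \longrightarrow \Aut(G_1) \times \cdots \times \Aut(G_t), \qquad \varphi \mapsto (\varphi|_{N_1}, \ldots, \varphi|_{N_t}).
\]
Injectivity comes from the fact that $G$ is generated by $\bigcup_i N_i$, and surjectivity because any tuple of automorphisms of the $G_i$'s extends coordinatewise to an automorphism of $G$.

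In the case $G_1 = \cdots = G_t = G_0$, the same restriction map applied to the kernel of the induced action on $\{N_1,\ldots,N_t\}$ produces a short exact sequence
\[
1 \longrightarrow \Aut(G_0)^t \longrightarrow \Aut(G) \longrightarrow \Sf_t \longrightarrow 1,
\]
and the coordinate permutations $(g_1,\ldots,g_t) \mapsto (g_{\sigma^{-1}(1)},\ldots,g_{\sigma^{-1}(t)})$ furnish a splitting, identifying $\Aut(G)$ with $\Aut(G_0) \wr \Sf_t$. The main technical step is the minimal-normal-subgroup classification above: nonabelian simplicity is essential, as the statement fails for abelian factors (e.g.\ $\Aut((\Z/p)^t) = \GL_t(\F_p)$ is vastly larger than the wreath product $(\Z/p)^{\times} \wr \Sf_t$).
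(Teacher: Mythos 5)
Your proof is correct, and it follows essentially the same route as the argument the paper cites from Robinson's textbook: identify the minimal normal subgroups of the product as precisely the canonical factors $N_i$ (using normality of each projection $\pi_i(N)$, simplicity, and the commutator argument $N_i=[N,N_i]\le N$ when $\pi_i(N)=G_i$), observe that $\Aut(G)$ permutes these factors respecting isomorphism type, and then assemble the direct-product and wreath-product structure from the coordinatewise automorphisms and the splitting by coordinate permutations. This is the standard argument, so there is nothing substantively different from what the paper intends by its one-line reference.
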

\begin{proof}
  Proceed as in the second paragraph of the proof of \cite[3.3.20]{Rob96}.
\end{proof}
\item Class $\Cc_5$ (symplectic-type $r$-subgroups):
  \[\pi(H)\cong
  \begin{cases}
    \Z/r^{2m}.\Sp_{2m}(\F_r)&:G=\SL_n(\F_{r^m})\\
    \Z/2^{2m}.\GO^-_{2m}(\F_2)&:G=\Sp_n(\F_{2^m})
  \end{cases}
\]
with $n=r^m$, $r\neq \ell$ prime, $\ell\equiv 1\pmod{r(2,r)}$. Here, we have only given the classification of the subgroups that arise in the class; for more details about the latter, see \cite[Section 4.6]{KleiLieb90}.
\item Class $\Cc_6$ (normalizers of classical groups): For $G=\SL_n(\F_\lambda)$ with $n$ odd (since $\ell\neq 2$, this class does not arise in the symplectic case) and $\F'\le\F_\lambda$ a subfield such that $|\F'|=|\F_\lambda|^{1/2}$,
  \[H=N_{G}(\SO_n(\F_\lambda))\text{ or } N_{G}(\SU_n(\F')).\]
\item Class $\Cc_7$ (subfield subgroups): For $\F'\le\F_\lambda$ of prime index,
  \[H=N_{G}(G(\F')).\]
\end{itemize}

Note that the unitary cases of classes $\Cc_6$ and $\Cc_7$ do not arise if $\F_\lambda=\F_\ell$.

\begin{remark}
  Similar results hold for other classical groups and the description of the classes $\Cc_i$ can be made more explicit (see \cite{LiebSei98}).
\end{remark}

\subsection{Review on automorphism groups}
We briefly recall the following results about automorphisms of Lie algebras/Lie groups/finite groups of Lie type that will be useful several times, in particular to handle class $\Cc_4$ and case \ref{item:classifMaxSubgroups2} of Theorem \ref{thm:classifMaxSubgroups}:

\begin{proposition}\label{prop:OutLieGroup}
  If $G$ is a simple Lie algebra (resp. a simply connected simple Lie group) over an algebraically closed field, there is an isomorphism between $\Out(G)$ and the group $\mathrm{Graph}(G)$ of graph automorphisms of the corresponding Dynkin diagram.
\end{proposition}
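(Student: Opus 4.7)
The plan is to fix a pinning (Cartan subalgebra, Borel, and choice of generators) and use that the inner automorphism group acts transitively on pinnings, so that every outer automorphism class has a representative preserving the pinning; such representatives then correspond bijectively to graph automorphisms.

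More concretely, fix a Cartan subalgebra $\mathfrak{h} \subset \mathfrak{g}$, a Borel $\mathfrak{b} \supset \mathfrak{h}$ determining a set $\Delta$ of simple roots, and Chevalley generators $\{e_\alpha, f_\alpha, h_\alpha : \alpha \in \Delta\}$ satisfying the Serre relations. Given any $\phi \in \mathrm{Aut}(\mathfrak{g})$, the image $\phi(\mathfrak{h})$ is another Cartan subalgebra, and $\phi(\mathfrak{b})$ another Borel containing it; since the inner automorphism group $\mathrm{Inn}(\mathfrak{g})$ acts transitively on pairs (Cartan, Borel containing it), we can replace $\phi$ by $\iota \circ \phi$ for some $\iota \in \mathrm{Inn}(\mathfrak{g})$ so that $\phi(\mathfrak{h}) = \mathfrak{h}$ and $\phi(\mathfrak{b}) = \mathfrak{b}$. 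Then $\phi$ induces a permutation of $\Delta$ that preserves the Cartan matrix, i.e.\ a graph automorphism of the Dynkin diagram. This yields a well-defined map $\Out(\mathfrak{g}) \to \mathrm{Graph}(\mathfrak{g})$.

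For injectivity, I would show that any automorphism $\phi$ fixing the pinning pointwise (i.e.\ $\phi(e_\alpha) = e_\alpha$, $\phi(h_\alpha) = h_\alpha$, $\phi(f_\alpha) = f_\alpha$ for all $\alpha \in \Delta$) is the identity, because the Chevalley generators generate $\mathfrak{g}$. More generally, any two pinning-preserving automorphisms inducing the same graph automorphism differ by a torus element, which can be absorbed into an inner automorphism; the remaining freedom is seen to be inner. For surjectivity, given a graph automorphism $\tau$ of the Dynkin diagram, the assignment $e_\alpha \mapsto e_{\tau(\alpha)}$, $f_\alpha \mapsto f_{\tau(\alpha)}$, $h_\alpha \mapsto h_{\tau(\alpha)}$ preserves the Serre relations (since $\tau$ preserves the Cartan matrix), and hence extends uniquely to an automorphism of $\mathfrak{g}$ by the Serre presentation.

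The main obstacle is verifying that the kernel of the map $\mathrm{Aut}(\mathfrak{g})_{\mathrm{pinning}} \to \mathrm{Graph}(\mathfrak{g})$ lands in $\mathrm{Inn}(\mathfrak{g})$ — i.e., that after fixing a pinning, only diagonal (torus) adjustments remain, and these are inner. For the simply connected Lie group version, one passes to $\mathfrak{g} = \mathrm{Lie}(G)$, uses that automorphisms of $G$ are determined by their differentials at the identity (by simple connectedness), and checks that the lifted graph automorphism integrates to a group automorphism; conversely, every automorphism of $G$ restricts to an automorphism of $\mathfrak{g}$, and inner automorphisms of $G$ correspond to inner automorphisms of $\mathfrak{g}$ under the adjoint representation. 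The result is then a translation of the Lie algebra statement.
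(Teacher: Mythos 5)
Your proposal is correct and follows the standard pinning argument (reduce an arbitrary automorphism modulo inner ones until it stabilizes a chosen Cartan and Borel, read off a Dynkin-diagram symmetry, and use the Serre presentation to build a section $\mathrm{Graph}(\mathfrak{g})\hookrightarrow\mathrm{Aut}(\mathfrak{g})$, with the kernel absorbed by torus conjugation). The paper itself offers no argument, only citations to Humphreys (Ch.~16.5) and Fulton--Harris (Prop.~D.40), and those references prove the statement exactly the way you do, so this is essentially the same approach.
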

\begin{proof}
  This can be found in \cite[Chapter 16.5]{Hum80} and \cite[Proposition D.40]{FulHar91}.
\end{proof}
In the finite case, this becomes:
\begin{proposition}\label{prop:OutLieGroupFinite}
  If $G$ is a finite simple group of Lie type defined over a finite field $k$, every automorphism can be written as the product of an inner, graph, diagonal, and field automorphism. More precisely,
  \[\Out(G)\cong \left(\mathrm{Diag}(G)\Aut(k)\right).\mathrm{Graph}(G),\]
  where $\mathrm{Diag}(G)$ is the group of diagonal automorphisms.
\end{proposition}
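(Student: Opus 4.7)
My plan is to reduce to the algebraic case by realizing $G$ as the derived subgroup (modulo center) of $\mathbf{G}^F$, where $\mathbf{G}$ is a simple simply connected algebraic group over $\overline{k}$ and $F$ is a Frobenius endomorphism with $\mathbf{G}^F$ finite. The key algebraic input is Steinberg's theorem that every abstract automorphism $\varphi$ of $G$ is the restriction of an algebraic automorphism $\tilde\varphi$ of $\mathbf{G}$. This is established by first showing that $\varphi$ must permute the (abstractly characterized) Sylow $p$-subgroups of $G$, and hence permute the $F$-stable Borel subgroups and root subgroups of $\mathbf{G}$, from which one extracts $\tilde\varphi$ by prescribing it on a generating set of root subgroups and checking the Chevalley commutator relations.

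Once $\varphi$ is lifted to an automorphism $\tilde\varphi \in \Aut(\mathbf{G})$, the condition that $\tilde\varphi$ preserves the subgroup $G = \mathbf{G}^F$ is equivalent to the condition that $\tilde\varphi$ commutes with $F$ up to an inner automorphism. By Proposition \ref{prop:OutLieGroup} applied to $\mathbf{G}$, one can write $\tilde\varphi = \iota_g \circ \gamma$ with $\iota_g$ inner in the adjoint group and $\gamma$ a graph automorphism coming from a symmetry of the Dynkin diagram. The commutation with $F$ then forces $g \in \mathbf{G}_{ad}(\overline k)$ to satisfy $F(g) g^{-1} \in Z(\mathbf{G})$ up to a choice of field automorphism, which by Lang-Steinberg produces the decomposition into (i) an honest inner automorphism of $G$, (ii) a diagonal automorphism from $\mathbf{G}_{ad}^F/\mathrm{image}(\mathbf{G}^F)$ measuring the defect of simple connectedness, and (iii) a power of the geometric Frobenius, i.e.\ a field automorphism coming from $\Aut(k)$.

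Having obtained the generation statement, I would then identify the structure of $\Out(G)$. The subgroup $\mathrm{Inn}(G)$ is normal, and modding out gives $\mathrm{Diag}(G)$ as the kernel of the natural map to $\Out(\mathbf{G}) \rtimes \Aut(k)$, reflecting the exact sequence $1 \to Z(\mathbf{G})^F \to \mathbf{G}^F \to \mathbf{G}_{ad}^F$. The field automorphisms $\Aut(k)$ act on $\mathrm{Diag}(G)$, producing the semidirect-like product $\mathrm{Diag}(G)\Aut(k)$, and the graph automorphisms from $\mathrm{Graph}(G)$ act on top. The extension by $\mathrm{Graph}(G)$ is not split in general (for instance in the triality case of $D_4$), which is why one writes a dot rather than a semidirect product.

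The main obstacle is the first step — promoting an abstract group automorphism to an algebraic one. This is the heart of Steinberg's original argument and uses delicately the classification of maximal tori, $(B,N)$-pair theory, and the Chevalley commutator relations; rather than redoing this, I would cite Steinberg's lectures and \cite[Section 1.15, Theorem 2.5.1]{KleiLieb90} (or equivalently Gorenstein--Lyons--Solomon for a textbook treatment), since it is a well-established classical result and the paper only needs the structural statement.
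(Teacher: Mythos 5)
Your proposal ultimately takes the same route as the paper, which simply cites Gorenstein \cite[4.237]{Gor82} and Carter \cite[Theorem 12.5.1]{Carter72} (i.e., Steinberg's theorem on automorphisms of finite groups of Lie type); the detailed sketch you give of how Steinberg's argument produces the inner/diagonal/field/graph decomposition is accurate in outline and is a useful unpacking of what those references contain. One small caveat: the lifting step produces not an algebraic automorphism of $\mathbf{G}$ but a composition of an algebraic automorphism with a power of the Frobenius endomorphism (which is a bijective morphism yet not an isomorphism of algebraic groups), so ``$\tilde\varphi\in\Aut(\mathbf G)$'' should be read loosely; since you defer to the literature for the hard step, this does not affect the correctness of your argument.
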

\begin{proof}
  See \cite[4.237]{Gor82} and \cite[Theorem 12.5.1]{Carter72}.
\end{proof}

\begin{proposition}\label{prop:AutDynkin}
  The automorphism group of a connected Dynkin diagram is $\Z/2$ for $A_n$, $D_n$ \textup{(}$n>1$\textup{)} and $E_6$, $\Sf_3$ for $D_4$, and trivial otherwise.
\end{proposition}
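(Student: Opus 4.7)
The plan is to appeal to the Cartan--Killing classification of connected Dynkin diagrams --- the four infinite families $A_n$ ($n\ge 1$), $B_n$ ($n\ge 2$), $C_n$ ($n\ge 2$), $D_n$ ($n\ge 4$), together with the five exceptional diagrams $E_6$, $E_7$, $E_8$, $F_4$, $G_2$ --- and to inspect each case by hand. Here an automorphism is a bijection of the vertex set preserving edge multiplicities (single, double or triple) and, on each multiple edge, the arrow that records which endpoint corresponds to the short simple root.

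First I would dispose of the multiply-laced diagrams $B_n$, $C_n$, $F_4$ and $G_2$: in each case the arrow on the unique non-simple edge breaks every putative reflection, so $\Aut$ is trivial. For $A_n$ with $n\ge 2$ the diagram is a path of single edges, whose only nontrivial symmetry is the reflection through the midpoint, giving $\Z/2$; the case $n=1$ is degenerate (the diagram is a single vertex) and the statement should be read with $n\ge 2$.

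The remaining simply-laced types $D_n$ and $E_6$, $E_7$, $E_8$ are trees with a unique vertex of valence $3$, which any automorphism must fix. Automorphisms thus correspond to permutations of the three branches emanating from that vertex that match up their lengths. For $D_n$ with $n>4$ the branch lengths are $1,1,n-3$ with $n-3\ge 2$, so only the two length-$1$ branches can be swapped and $\Aut\cong\Z/2$; for $D_4$ all three branches have length $1$ and can be permuted freely, giving $\Sf_3$. For $E_6$ the lengths are $1,2,2$, producing a $\Z/2$; for $E_7$ and $E_8$ they are $1,2,3$ and $1,2,4$ respectively, all pairwise distinct, ruling out any nontrivial symmetry.

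The argument is purely combinatorial and there is no substantive obstacle; the only pitfall is the bookkeeping of arrow data on the multiply-laced edges, which if ignored would spuriously produce a $\Z/2$ for each of $B_n$, $C_n$, $F_4$ and $G_2$.
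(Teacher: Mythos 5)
Your proposal is correct. The paper in fact gives no proof of this proposition at all---it is stated as a standard fact alongside references for the neighboring Propositions \ref{prop:OutLieGroup} and \ref{prop:OutLieGroupFinite}---so there is nothing to compare against; the case-by-case inspection you give is exactly the expected argument. Your branch-length counts ($1,1,n-3$ for $D_n$; $1,2,2$ for $E_6$; $1,2,3$ for $E_7$; $1,2,4$ for $E_8$) are all correct, and the observation that an automorphism must fix the unique trivalent vertex of these trees and hence permutes branches of equal length is the right reduction. One small imprecision: for $B_n$ and $C_n$ with $n\ge 3$, what kills the automorphism is already the position of the double edge at one end of the path (a reflection would have to carry a single edge to a double edge), not the arrow data; the arrow is only what is needed to rule out the reflection of $B_2=C_2$, $F_4$ and $G_2$, where the underlying multigraph is reflection-symmetric. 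Your remark about the degenerate case $A_1$ is also sensible; the paper's phrasing ``$n>1$'' is most naturally read as qualifying both $A_n$ and $D_n$.
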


\subsection{Strategy to prove Theorem \ref{thm:classification}}

As already mentioned, we apply Theorem \ref{thm:classifMaxSubgroups} to the situation of Theorem \ref{thm:classification}. Let us first sketch the general approach.

\subsubsection{Geometric subgroups}

First, we exclude subgroups from classes $\Cc_1,\dots,\Cc_5$:

\begin{proposition}\label{prop:geometricSubgroups}
    Let $H\in\bigcup_{i=1}^5 \Cc_i$ be a maximal proper subgroup as in Theorem \ref{thm:classifMaxSubgroups}. If $H$ acts irreducibly on $V$ and contains a unipotent element $u$ with a single Jordan block, then $\ell\ll_n 1$.
\end{proposition}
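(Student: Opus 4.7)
The plan is to fix a regular unipotent $u\in H$ and work case-by-case through the classes $\Cc_1,\ldots,\Cc_5$. The key preliminary observation I would record is that, for $\ell>n$, the minimal polynomial $(X-1)^n$ of $u$ forces $u$ to have order exactly $\ell$ and, more usefully, $u^k$ remains a single Jordan block of size $n$ for every $k$ coprime to $\ell$. Class $\Cc_1$ is immediate from the irreducibility assumption on $H$. Class $\Cc_5$ is handled by a size bound: since $n=r^m$ determines $r$ and $m$, the order $|\pi(H)|$ (and hence $|H|$, the centre contributing a factor of at most $n$) is bounded purely in terms of $n$, so for $\ell$ larger than this bound $H$ cannot contain an element of order $\ell$.

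For the direct-sum classes $\Cc_2$ and $\Cc_3$, I would exploit the embedding $\bs M/\bs M^0\hookrightarrow\Sf_t$ together with $\bs M^0\le\prod_{i=1}^t\GL(\Vb_i)$ coming from the decomposition $\Vb=\bigoplus_i\Vb_i$. The image of $u$ in $\bs M/\bs M^0$ has order $d\le t!\le n!$, so for $\ell>n!$ the power $u^d$ lies in $\bs M^0$ and preserves each summand $\Vb_i$ of dimension $n/t<n$. Since $u^d$ is still a single Jordan block of size $n$, its Jordan blocks after restricting to any $\Vb_i$ would have size at most $n/t<n$, a contradiction.

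For the tensor-product class $\Cc_4$ the approach is analogous but more delicate. Modulo the centre, $\bs L=\prod_i\bs G(\Vb_i)$ has finite index in $\bs M$ bounded in terms of $n$ alone (via the embedding of $\bs M/\bs L\cdot Z$ into a wreath product of outer automorphism groups of classical algebraic groups with $\Sf_t$, recalled in the excerpt). After replacing $u$ by a suitable $p'$-power and rescaling each tensor factor, I may assume $u=u_1\otimes\cdots\otimes u_t$ with each $u_i$ unipotent on $\Vb_i$. For $\ell>n$ the modular Clebsch--Gordan rule is valid for $\SL_2$-representations of highest weight $<\ell$, so the tensor product decomposes into Jordan blocks whose maximal size is bounded by $\sum_i\dim\Vb_i-(t-1)$. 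Since all $\dim\Vb_i\ge 2$ and $t\ge 2$, this is strictly less than $\prod_i\dim\Vb_i=n$, again contradicting the regularity of $u$.

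The main obstacle I anticipate is class $\Cc_4$: both the reduction from $\bs M$ to $\bs L$ (absorbing the factor-permutation action and the outer automorphisms of the $\bs G(\Vb_i)$ into a bounded power of $u$) and the validity in characteristic $\ell$ of the Clebsch--Gordan decomposition of tensor products of Jordan blocks must be handled carefully, and this is precisely where the bound $\ell\gg_n 1$ is forced. The other classes reduce to routine bookkeeping once the preservation of regularity under prime-to-$\ell$ powers is noticed.
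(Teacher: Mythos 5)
Your proposal follows the same overall strategy as the paper's proof: show $u$ has order exactly $\ell$ for $\ell\gg_n 1$, then exclude each of the classes $\Cc_1,\dots,\Cc_5$ by combining a size bound on the relevant quotient (for $\Cc_5$, and for $\Cc_4$ when $\pi(u)\notin\pi(\bs L)$) with the fact that a regular unipotent preserves a unique flag of invariant subspaces (for $\Cc_2,\Cc_3$, and for $\Cc_4$ when $\pi(u)\in\pi(\bs L)$). The treatment of $\Cc_1$, $\Cc_2$, $\Cc_3$, $\Cc_5$ matches the paper up to reorganization. The one genuine difference is the final step of $\Cc_4$: to rule out a regular unipotent of the form $u_1\otimes\cdots\otimes u_t$, you invoke the modular Clebsch--Gordan rule (valid for $\ell>n$) to bound the largest Jordan block by $\sum_i\dim\Vb_i-(t-1)<n$, whereas the paper simply exhibits two linearly independent fixed vectors of $g_1\otimes g_2$, namely $v_{1,1}\otimes v_{2,1}$ and $v_{1,2}\otimes v_{2,1}-v_{1,1}\otimes v_{2,2}$, a characteristic-free computation showing that every element of $\pi(\bs L)$ has at least two Jordan blocks. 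Both arguments are correct; the paper's is more elementary and avoids any appeal to modular representation theory beyond Lemma~\ref{lemma:elementOrderell}, while yours gives a sharper quantitative bound on the block size.
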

  
Classes $\Cc_6$ and $\Cc_7$ will remain in the conclusion of Theorem \ref{thm:classification}.

\subsubsection{Almost-simple subgroups}

On the other hand, if $H$ satisfies \ref{item:classifMaxSubgroups2} of Theorem \ref{thm:classifMaxSubgroups}, there exists a nonabelian simple group $S$ such that 
  \[S=\pi(T)\normal \tilde H=\pi(H)\le\Aut(S)\]
  with $T\normal H\le\SL_n(\F_\lambda)$ and $T$ acting irreducibly on $\overline\F_\lambda^n$, for $\pi: \SL_n\to\PSL_n$ the projection.\\

  The first step is to reduce to the case where $S$ is a group of Lie type in characteristic $\ell$, applying the following with the regular unipotent element contained in $H$:
  
  \begin{proposition}\label{prop:reduceLieCharell}
    In the above setting, if $H$ contains a nonscalar element of order $\ge \ell$, then either $\ell\ll_n 1$ or $S$ is a group of Lie type in characteristic $\ell$.
  \end{proposition}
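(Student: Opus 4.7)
The plan is to combine the classification of finite simple groups (CFSG) with two classical kinds of lower bounds: a lower bound on an element order in $H$ coming from the hypothesis, and lower bounds on the minimal dimension of a faithful projective representation of each family of finite simple groups. Indeed, since $\pi(T)=S$ and $T\le\SL_n(\F_\lambda)$ is absolutely irreducible, $S$ admits a faithful $n$-dimensional projective representation over $\overline\F_\lambda$ in characteristic $\ell$, and the conclusion amounts to ruling out the alternating, sporadic, and cross-characteristic Lie-type families for $\ell\gg_n 1$.

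First I would transfer the large-order hypothesis down to $S$. The nonscalar element $g\in H$ projects to a nontrivial element $\pi(g)\in\tilde H\le\Aut(S)$; since the scalars in $\SL_n(\F_\lambda)$ form a group of order dividing $n$, the order of $\pi(g)$ is some $k\ge\ell/n$. Writing $k'$ for the order of the image of $\pi(g)$ in $\Out(S)$ in the extension $1\to\Inn(S)\cong S\to\Aut(S)\to\Out(S)\to 1$, we have $k=k'\cdot(k/k')$ with $k'$ an element order in $\Out(S)$ and $k/k'$ an element order in $S$. Hence either $S$ or $\Out(S)$ contains an element of order at least $\sqrt{\ell/n}$.

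Next I would run CFSG case by case. For $S=A_m$ with $m\ge 5$, the minimal dimension of a nontrivial projective irreducible representation of $A_m$ in any characteristic grows linearly in $m$, so $m\le Cn$ for an absolute constant $C$; combined with $|\Out(A_m)|\le 4$ and the Landau bound $e^{O(\sqrt{m\log m})}$ on element orders in $A_m$, every element of $\Aut(A_m)$ has order $O_n(1)$, contradicting $\sqrt{\ell/n}\gg_n 1$. For $S$ sporadic there are only $26$ possibilities, so $|\Aut(S)|$ and its element orders are absolutely bounded. For $S$ of Lie type over $\F_{q'}$ of Lie rank $r$ in characteristic $p'\ne\ell$, the Landazuri--Seitz--Feit--Tits lower bound on the minimal degree of a nontrivial cross-characteristic projective representation of $S$, which grows polynomially in $q'$ and $r$, forces $q',r\le O_n(1)$, hence $|\Aut(S)|\le O_n(1)$ and element orders are $O_n(1)$, again contradicting $\ell\gg_n 1$.

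The main obstacle is the cross-characteristic Lie-type case, where the Landazuri--Seitz--Feit--Tits bound has to be invoked uniformly across all the series of finite simple groups of Lie type; the alternating case also requires the modular (rather than just characteristic-zero) form of the linear lower bound on the minimal projective degree. All other steps are routine consequences of CFSG and of the elementary reduction from $\Aut(S)$ to $S$ and $\Out(S)$.
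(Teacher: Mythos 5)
Your proof is correct and follows essentially the same CFSG-based route as the argument given in the paper, using Wagner's linear lower bound on modular projective degrees for alternating groups and the Landazuri--Seitz bound for Lie type in cross-characteristic. Two small comments: the detour through element orders in $S$ versus $\Out(S)$ with the $\sqrt{\ell/n}$ threshold is unnecessary, since one can bound $\ell/n \le |\pi(H)| \le |\Aut(S)| \ll_n 1$ directly and conclude; and the paper's primary proof invokes a theorem of Larsen--Pink on finite subgroups of $\PGL_n(\F_\lambda)$, which avoids CFSG entirely, with the CFSG-based case analysis you give offered there as an alternative.
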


  Our first approach relied on the classification of finite simple groups, but this is also a particular case of a recent powerful theorem of Larsen and Pink, independent from the classification. We will give both arguments.\\

  For groups of Lie type in characteristic $\ell$, we have:

  \begin{proposition}\label{prop:LieCharell}
    In the above setting, if $S=\pi(T)$ is a simple group of Lie type in characteristic $\ell\gg_n 1$ and $H$ contains a regular unipotent element, then either:
    \begin{enumerate}
    \item $T=\SL_n(\F_\lambda)$ or $T=\Sp_n(\F_\lambda)$, or
    \item The inclusion $T\to\SL_n(\F_\lambda)$ fixes a nondegenerate bilinear or unitary form on $\F_\lambda^n$, and if $n\ge 4$, the image of $T$ is not contained in $\Sp_n(\F_\lambda)$.
    \end{enumerate}
  \end{proposition}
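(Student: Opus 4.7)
The plan is to descend a theorem of Suprunenko \cite[Theorem (1.9)]{Sup95} from the algebraically closed field $\overline{\F_\ell}$ down to the finite field $\F_\lambda$. Recall that Suprunenko's theorem classifies the closed connected subgroups of $\SL_n(\overline{\F_\ell})$ containing a regular unipotent element: they are $\SL_n$, $\Sp_n$ ($n$ even), $\SO_n$ ($n$ odd, or $n$ even with some constraints), and $G_2$ in its $7$-dimensional representation. Once we know that the identity component of the Zariski closure of $T$ is on this list, the conclusion of Proposition \ref{prop:LieCharell} follows immediately by descent: if the closure is $\SL_n$ or $\Sp_n$ then $T$ itself equals $\SL_n(\F_\lambda)$ or $\Sp_n(\F_\lambda)$ (using that $T$ is quasisimple and the Lang--Steinberg theorem for the relevant Frobenius), and otherwise $T$ lies inside the fixed points of a nondegenerate form (orthogonal in the $\SO_n$/$G_2$ cases, unitary in the twisted cases coming from a unitary Frobenius).

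The first step, which is the technical heart, is to realise the given embedding $T \hookrightarrow \SL_n(\F_\lambda)$ as the restriction of a rational representation $\rho: \mathbf{G} \to \SL_n(\overline{\F_\lambda})$ of the simple simply-connected algebraic group $\mathbf{G}$ over $\overline{\F_\ell}$ whose fixed points under a Steinberg endomorphism give (up to centre and derived subgroup) the finite group $T$. In defining characteristic, Steinberg's tensor product theorem decomposes any irreducible representation of $T$ as a tensor product of Frobenius twists of restricted irreducibles; when $\ell \gg_n 1$, all highest weights of $n$-dimensional irreducibles are necessarily $\ell$-restricted and untwisted, so that the representation is literally the restriction of a rational representation of $\mathbf{G}$. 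This is where the lower bound on $\ell$ (depending only on $n$) is consumed. One must also argue that the regular unipotent element of $T$ maps to a regular unipotent of $\rho(\mathbf{G})$: its Jordan block structure is a single block of size $n$, and this forces the image to be regular unipotent in any connected reductive group in which it lies, again provided $\ell \gg_n 1$ so that the unipotent element is not accidentally of larger order than $\ell$ can support.

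With these ingredients in place, $\rho(\mathbf{G})$ is a closed connected subgroup of $\SL_n(\overline{\F_\lambda})$ containing a regular unipotent element, and Suprunenko's theorem applies. The final step consists in discarding the extraneous cases coming from the hypothesis that $H$ (hence $T$) lies in the almost-simple case of Theorem \ref{thm:classifMaxSubgroups}: in that case $T$ preserves no nondegenerate bilinear or unitary form when $G = \SL_n(\F_\lambda)$, which rules out $\rho(\mathbf{G}) = \Sp_n$ for $n \geq 4$ in the statement, while if $G = \Sp_n(\F_\lambda)$ the containment $T \le \Sp_n(\F_\lambda)$ and the almost-simple/irreducibility assumption force $T = \Sp_n(\F_\lambda)$. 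The main obstacle is the clean extension of $T \hookrightarrow \SL_n(\F_\lambda)$ to a rational representation of $\mathbf{G}$: this requires bounding, in terms of $n$ alone, both the possible highest weights appearing and the number of Frobenius twists in Steinberg's decomposition, and verifying that such bounds are compatible with the lower bound $\ell \gg_n 1$ that controls the size of regular unipotent elements and the failure of semisimplicity phenomena in $\overline{\F_\ell}$-coefficient representation theory.
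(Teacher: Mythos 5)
Your overall plan --- lift the embedding $T\hookrightarrow\SL_n(\F_\lambda)$ to a rational representation of the ambient simple algebraic group, invoke Suprunenko's classification \cite[Theorem (1.9)]{Sup95} of irreducible closed subgroups of $\SL_n(\overline\F_\ell)$ containing a regular unipotent, and descend to $\F_\lambda$ --- is the right skeleton and matches the paper in spirit. For the lifting step the paper uses the Seitz--Testerman extension theorem (Theorem~\ref{thm:SeiTes}) rather than Steinberg's restriction/tensor-product theorem, but since absolute irreducibility of $T$ on $\overline\F_\lambda^n$ is available from Theorem~\ref{thm:classifMaxSubgroups}~\ref{item:classifMaxSubgroups2}, either route produces the extension $\bs\varphi:\mathbf{G}\to\SL_n(\overline\F_\lambda)$; this is a cosmetic divergence. (Your aside that highest weights must be ``$\ell$-restricted and untwisted'' for $\ell\gg_n 1$ is not needed for the lift itself --- Steinberg's theorem already provides a rational extension from $q$-restricted weights --- but it is the right intuition for why a genuine tensor-product or Frobenius-twisted factorisation is incompatible with the single-Jordan-block condition.)

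The genuine gap is at the very beginning: you argue as though $T$ already contains a regular unipotent element, but the hypothesis only gives one in $H$, and $T$ may be a proper normal subgroup of $H$. Nothing you write forces the element $u\in H$, or its image $\pi(u)\in\tilde H$, to lie in $T$ or $S=\pi(T)$, and without that your Steinberg/Suprunenko mechanism has nothing to act on. The paper devotes Proposition~\ref{prop:uinS} precisely to this step: for $\ell\gg_n 1$ the element $\pi(u)$ has prime order $\ell$ (Lemma~\ref{lemma:elementOrderell}), so if $\pi(u)\notin S$ its image in $\tilde H/S\le\Out(S)$ has order $\ell$, forcing $\ell\mid|\Out(S)|$; one then shows $|\Out(S)|\ll_n 1$, which for $\ell\gg_n 1$ is a contradiction. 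Establishing that bound is the real work of this section and is where much of the implicit constant in $\ell\gg_n 1$ comes from: one invokes Liebeck's lower bounds on the minimal faithful projective representation dimension $m(S)$ in characteristic $\ell$ (\cite[(2.1)--(2.2)]{Lieb85}, Table~\ref{table:m(S)}, Lemma~\ref{lemma:boundLiebeck}) to bound the Lie rank and defining field degree of $S$ in terms of $n$, and then bounds the diagonal, graph, and field automorphism contributions (Lemma~\ref{lemma:sizeOutS}, Table~\ref{table:OutS}). Your proposal omits all of this and therefore does not prove the proposition as stated.
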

  To prove this, we first show that the degree of the field of definition must be small, according to results of Liebeck on the minimal dimension of faithful irreducible modular representations of simple groups of Lie type. This implies that the group of outer automorphisms is small, and then that $S$ must contain the regular unipotent element of $\tilde H$ for $\ell\gg_n 1$. Over an algebraically closed field, the irreducible representations of a semisimple algebraic group with central kernel whose images contain an element with a single Jordan block are classified by a result of Suprunenko. The absolute irreducibility of the action of $T$ allows to descend to finite groups of Lie type by a result of Seitz-Testerman.
  
\begin{remark}
  The strategy to exclude alternating groups and groups of Lie type in cross-characteristic in the almost simple case of the characterization is quite standard (see e.g. \cite[Chapter 28]{TesMal11}). The results of Liebeck mentioned are notably used in \cite{KantLub90} to determine the probability that two random elements of $\PSL_n(\F_\ell)$ are generators, in the case $\ell\le 9$.
\end{remark}

\subsubsection{Conclusion}

Since Theorem \ref{thm:classifMaxSubgroups} \ref{item:classifMaxSubgroups2} excludes that $T$ fixes a nondegenerate bilinear or unitary form on $\F_\lambda^n$ when $n$ is odd, the only possibilities that remain are $T=\SL_n(\F_\lambda)$ when $n$ is odd and $T=\Sp_n(\F_\lambda)$ when $n$ is even. The classification theorem \ref{thm:classification} follows.

\begin{remark}
  According to \cite{Lieb85}, building on Theorem \ref{thm:classifMaxSubgroups} and the classification of finite simple groups, a maximal subgroup $H$ of a classical group over a finite field $\F_\lambda$ of characteristic $\ell$ satisfies one of the following:
  \begin{itemize}
  \item $H$ belongs to one of the families $\Cc_1,\dots,\Cc_7$.
  \item $H$ is $\Alt(c)$ or $\Sf_c$ with $c\in\{n+1,n+2\}$, and $H\to\GL_n(\F_\lambda)$ is the representation of minimal dimension.
  \item $|H|<|\F_\lambda|^{3n}$.
  \end{itemize}
  As we just mentioned, it is relatively easy to exclude the first two families by using the presence of a regular unipotent element or the growth of $H$ when $\ell\to\infty$. Since $|\SL_n(\F_\lambda)|=\Theta_n(|\F_\lambda|^{n^2-1})$ and $|\Sp_n(\F_\lambda)|=\Theta_n(|\F_\lambda|^{n(2n+1)})$, the result of Liebeck shows that in the remaining cases $H$ is quite small. In other words, we would only need to show that the monodromy group is ``moderately big'' to show that it is the full classical group expected.
\end{remark}
\subsection{Excluding members of classes $\Cc_1,\dots,\Cc_5$ (Proposition \ref{prop:geometricSubgroups})}

\begin{lemma}\label{lemma:elementOrderell}
  If $\ell\gg_n 1$, a unipotent element with a single Jordan block in $\GL_n(\F_\ell)$ has order $\ell$.
\end{lemma}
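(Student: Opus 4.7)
The plan is to observe that the order of such a unipotent element is a power of $\ell$ determined entirely by the Jordan block size $n$, and that this power collapses to $\ell^1$ as soon as $\ell \geq n$.

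First I would reduce to a model element: up to conjugacy in $\GL_n(\F_\ell)$, a unipotent with a single Jordan block is $u = I_n + N$, where $N$ is the nilpotent matrix with $1$'s on the superdiagonal. In particular $N^{n-1}\neq 0$ but $N^n=0$. Since $u$ is unipotent, its order in $\GL_n(\F_\ell)$ is a power of $\ell$, so it suffices to compute $u^{\ell^m}$ for $m \geq 0$.

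Next I would invoke the Frobenius (``freshman's dream'') identity in characteristic $\ell$: because $\binom{\ell^m}{i} \equiv 0 \pmod{\ell}$ for every $1 \leq i \leq \ell^m - 1$, the binomial expansion gives
\[
u^{\ell^m} = (I_n + N)^{\ell^m} = I_n + N^{\ell^m}.
\]
Therefore $u^{\ell^m} = I_n$ if and only if $N^{\ell^m} = 0$, which in turn holds if and only if $\ell^m \geq n$. Consequently the order of $u$ is exactly $\ell^{\lceil \log_\ell n \rceil}$.

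Finally I would conclude: the condition $\ell^{\lceil \log_\ell n \rceil} = \ell$ is equivalent to $\ell \geq n$, so taking $\ell \gg_n 1$ (concretely any $\ell \geq n$) suffices. There is no real obstacle here; the only thing to be careful about is justifying that the binomial coefficients $\binom{\ell^m}{i}$ vanish mod $\ell$ for $1 \leq i < \ell^m$, which is the standard Kummer/Lucas computation and is the reason we may invoke Frobenius on the commuting pair $(I_n, N)$.
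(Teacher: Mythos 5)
Your proof is correct and takes essentially the same route as the paper's: both reduce to the Jordan block $u = I_n + N$, observe that the order is $\ell^{\lceil \log_\ell n\rceil}$ by a Lucas-type vanishing of binomial coefficients (your Frobenius identity $u^{\ell^m}=I_n+N^{\ell^m}$ is just a compact restatement of the paper's $(u^k)_{ij}=\binom{k}{j-i}$ plus Lucas), and conclude that $\ell\ge n$ suffices.
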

\begin{proof}
  The element is conjugate to the Jordan block $u$ satisfying $(u^k)_{ij}=\delta_{i\le j\le k}\binom{k}{j-i}$ for $1\le i,j\le n$, so it has order $\ell^{\ceil{\log_\ell n}}$ by Lucas' theorem. In particular, this is $\ell$ if $\ell\gg_n 1$.
\end{proof}

We prove Proposition \ref{prop:geometricSubgroups}, assuming throughout that $\ell\gg_n 1$ to apply Lemma \ref{lemma:elementOrderell}.

% \begin{lemma}\label{lemma:invariantSubspaces}
%   Let $K$ be a field and $u\in\SL_n(K)$ be a Jordan block of size $n$. There are $n+1$ subspaces which are $u$-invariant, given by $0$, $\lspan(e_1)$, $\lspan(e_1,e_2)$, \dots, $V=\lspan(e_1,\dots,e_n)$, where $(e_i)$ is the canonical basis of $K^n$.
% \end{lemma}
% \begin{proof}
%   As a $K[u]$-module, $V$ is isomorphic to $K[X]/(X-1)^n$ and the claim is then clear since $K[X]$ is a principal ideal domain.
% \end{proof}

\subsubsection{Class $\Cc_1$} The first class is excluded by the assumption that $H$ acts irreducibly on $V$.
\subsubsection{Class $\Cc_2$} By assumption, there exists a permutation $\sigma\in \Sf_t$ such that $u\Vb_i=\Vb_{\sigma(i)}$ for all $i$. Note that there is at most one $\Vb_i$ which is $u$-stable, since the $\Vb_i$ are disjoint with equal dimension, and $u$ has exactly only invariant subspace of each dimension $0\le d\le n$. In particular, $\sigma$ has at most one fixed point. Write $\sigma=\sigma_1\dots\sigma_k$ where $\sigma_1,\dots,\sigma_k$ are disjoint cycles, with $\sigma_j$ of length $2\le s_j\le n$. Since $u$ has order $\ell$, we have
\[\Vb_i=u^\ell \Vb_i=\Vb_{\sigma^\ell(i)},\]
so $\sigma^\ell=\id$ and either
\begin{itemize}
\item $\sigma=\id$, which implies that all $\Vb_i$ are $u$-stable, a contradiction.
\item $\ord(\sigma)=\lcm(s_1,\dots,s_k)=\ell$. Hence $s_j=\ell$ for all $j$, thus $k\ell=n-|\fix(\sigma)|$, i.e. $\ell\mid n$ or $\ell\mid n-1$. This can be excluded if $\ell>n$.
\end{itemize}
\subsubsection{Class $\Cc_3$} (for $G=\Sp_n(\F_\lambda)$). This is excluded in the same way as class $\Cc_2$.
\subsubsection{Class $\Cc_4$} Consider the morphism
\[N_{\PGL(\overline\F_\lambda)}(\pi(\bs L))\cap \pi(\Gb)\to \begin{cases}
    \Out\left(\pi(\Gb(\bs V_1))\right)\times \Out\left(\pi(\Gb(\bs V_2))\right)&:t=2\\
    \Out\left(\pi(\Gb(\bs V_1))\right)\wr \Sf_t&:t>2
  \end{cases}\]
with kernel $\pi(\bs L)\cap\pi(\Gb)$. If $\pi(u)\not\in\pi(\bs L)$, then the order of the image of $\pi(u)$ is
\[\ell\mid\left|\Out\left(\pi(\Gb(\Vb_i))\right)\right|^tt!=2^tt!\]
for some $i\in\{1,2\}$, by Propositions \ref{prop:OutLieGroup} and \ref{prop:AutDynkin}. Thus $\ell\ll_n 1$ since $t\le n$. On the other hand, elements in $\pi(\bs L)$ have at least two Jordan blocks\footnote{Since the center of $\GL_n(\overline\F_\lambda)$ is the group of scalar matrices, it makes sense to speak of the number of Jordan blocks of an element in $\PGL_n(\overline\F_\lambda)$.}, which also rules out the case $\pi(u)\in\pi(\bs L)$. Indeed, if $g_1\in \Gb(\Vb_1)$ and $g_2\in \Gb(\Vb_2)$ are two Jordan blocks, then $g_1\otimes g_2$ fixes the linearly independent vectors $v_{1,1}\otimes v_{2,1}$ and $v_{1,2}\otimes v_{2,1}-v_{1,1}\otimes v_{2,2}$, where $v_{i,1},v_{i,2}$ are the first two elements of the standard basis of $\Vb_i$ ($i=1,2$).

\subsubsection{Class $\Cc_5$} Consider the morphism
\[H\to \pi(H)\to \pi(H)/(\Z/r^m)\cong\Sp_{2m}(\F_r).\]
Since $u$ has order $\ell\neq r$, the image of $u$ in $\Sp_{2m}(\F_r)$ still has order $\ell$, hence
\[\ell\mid |\Sp_{2m}(\F_r)|=r^{m^2}(r^2-1)(r^4-1)\dots (r^{2m}-1)\le r^{m(2m+1)},\]
which implies that $\ell\ll_n 1$ because $n=r^m$. \qed
\subsection{Excluding almost simple groups (Propositions \ref{prop:reduceLieCharell} and \ref{prop:LieCharell})}
Let us now assume there exists a nonabelian simple group $S$ such that
\[S=\pi(T)\normal \pi(H)=\tilde H\le\Aut(S),\]
with $T\normal H\le\SL_n(\F_\lambda)$ and $T$ acting irreducibly on $\overline\F_\lambda^n$.
\subsubsection{Reduction to groups of Lie type in characteristic $\ell$}

We prove Proposition \ref{prop:reduceLieCharell}. Since $u\in H$ has order $\ge \ell$ and is not scalar, we have $|H|, |\tilde H|\ge \ell$. The result then follows from:
  
\begin{theorem*}\label{thm:reduceLieCharell2}
  Let $S\le\PGL_n(\F_\lambda)$ be a simple group. Then either $|S|\ll_n 1$, or $S$ is a group of Lie type in characteristic $\ell$.
\end{theorem*}

This is a direct consequence of \cite[Theorem 0.2]{LarsPink11} (see \cite[Theorem 0.3]{LarsPink11}), exploiting the theory of algebraic groups.

Let us nonetheless show how it also follows from the classification of finite simple groups (\cite[no. 1, p. 6]{Gor94}). According to the latter, it suffices to prove:
\begin{lemma}
  If $S$ is sporadic, alternating or of Lie type in characteristic coprime to $\ell$, then $\ell\ll_n 1$.
\end{lemma}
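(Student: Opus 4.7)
My plan is to proceed by the classification of finite simple groups and, in each family, bound $|\Aut(S)|$ in terms of $n$. The key link is that by Lemma \ref{lemma:elementOrderell}, the regular unipotent $u \in H$ has order exactly $\ell$ for $\ell \gg_n 1$; since $u$ is unipotent and non-scalar, its image $\pi(u) \in \tilde H \le \Aut(S)$ also has order $\ell$. Hence $\ell \le |\Aut(S)|$, and it suffices to show $|\Aut(S)| \ll_n 1$ in each of the three cases.

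The sporadic case is immediate: there are only $26$ isomorphism classes of sporadic simple groups, and the known (case-by-case verified) Schreier property gives $|\Out(S)| \le 2$, so $|\Aut(S)|$ is bounded by an absolute constant.

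For the alternating case $S = \Alt(c)$, the relevant input is the classical Wagner bound (see e.g. \cite[Proposition 5.3.7]{KleiLieb90}): the minimal dimension of a faithful irreducible modular representation of $\Alt(c)$ over any field is at least $c-2$ for $c \ge 9$. Since $T \normal H$ acts irreducibly on $\overline\F_\lambda^n$ with $\pi(T) = S$, this forces $c \le n + O(1)$, so $|S| = c!/2$ is bounded in terms of $n$. As $|\Out(\Alt(c))| \le 4$, we obtain $|\Aut(S)| \ll_n 1$.

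The substantive case is that of $S$ of Lie type in characteristic $r \ne \ell$. Here I would invoke the Landazuri--Seitz lower bounds (refined by Seitz--Zalesskii): the minimal dimension of a faithful projective $\overline\F_\lambda$-representation of such an $S$ is bounded below by an explicit function which grows polynomially in $|\F_r|$ for fixed Lie type, and grows without bound in the Lie rank for fixed $|\F_r|$. Combined with the upper bound coming from $S \hookrightarrow \PGL_n(\F_\lambda)$, this forces both the rank of $S$ and $|\F_r|$ to be bounded in terms of $n$, hence $|S| \ll_n 1$. The outer automorphism group of a finite simple group of Lie type is controlled by the rank (diagonal and graph automorphisms of bounded order) and by $\log_r|\F_r|$ (field automorphisms), so $|\Out(S)| \ll_n 1$ as well, yielding $\ell \le |\Aut(S)| \ll_n 1$.

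The main obstacle is the cross-characteristic Lie type case: the sporadic and alternating families reduce to routine finiteness or classical minimal-dimension bounds, whereas the Lie type case requires the Landazuri--Seitz machinery from modular representation theory of finite groups of Lie type. One should also be mildly careful in small-rank degeneracies of the alternating bound (small $c$), which are handled by absorbing them into the absolute constant.
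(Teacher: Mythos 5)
Your proof is correct and follows essentially the same route as the paper's: in both, one bounds $|\Aut(S)|$ in terms of $n$ case by case (finiteness of the sporadic list, Wagner's lower bound on faithful modular representations for $\Alt(c)$, and Landazuri--Seitz to bound the rank and defining field in the cross-characteristic Lie type case), and then concludes from $\ell\le|\tilde H|\le|\Aut(S)|$ via the order-$\ell$ element $\pi(u)$.
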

\begin{proof}
  First note that we have $|\Aut(S)|\ll_n 1$:
  \begin{itemize}
  \item If $S$ is sporadic, this is clear.
  \item If $S=\Alt(m)$ (with $m\ge 5$), then Wagner \cite[Theorem 1.1]{Wag77} showed that the dimension of a faithful modular representation of $S$ is at least $m-2$. Since $S\le \tilde H\le\PSL_n(\F_\lambda)$, it follows that $m\le n+2$, so $|\Aut(S)|\ll_n 1$.
  \item If $S$ is of Lie type of rank $l$ over a field $\F_r$ of characteristic distinct from $\ell$, then the main theorem of Landazuri-Seitz \cite{LandSei74} shows that $r,l\ll_n 1$, so that $|\Aut(S)|\ll_n 1$.
  \end{itemize}
  Hence $\ell\le |\tilde H|\le|\Aut(S)|\ll_n 1$.
\end{proof}

\subsubsection{Groups of Lie type in characteristic $\ell$}

We finally prove Proposition \ref{prop:LieCharell}. Let us assume that $S$ is a group of Lie type of rank $l$ over $\F_r$, with $r=\ell^a$. We continue to assume that $\ell\gg_n 1$ so that the regular unipotent element $u$ has order $\ell$.

The first difficulty to overcome is that we do not know a priori whether $S$ itself contains a regular unipotent element. However, we can show:
\begin{proposition}\label{prop:uinS}
  If $\ell\gg_n 1$, then $\pi(u)\in S$ and $T$ contains as well an element with a single Jordan block.
\end{proposition}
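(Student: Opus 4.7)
The plan is to first bound $|\Out(S)|$ purely in terms of $n$; since $u$ has order $\ell$ and is non-scalar, its class in $\tilde H/S\hookrightarrow\Out(S)$ will then be trivial for $\ell\gg_n 1$, giving $\pi(u)\in S$. A one-line lifting will then produce the required element of $T$.

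To bound $|\Out(S)|$, I would use that $T\le\SL_n(\F_\lambda)$ acts absolutely irreducibly on $\overline\F_\lambda^n$ (Theorem \ref{thm:classifMaxSubgroups}\ref{item:classifMaxSubgroups2}), so $T$ admits a faithful $n$-dimensional absolutely irreducible representation in defining characteristic. Since $T$ is a central extension of the simple group of Lie type $S$ of rank $l$ over $\F_r=\F_{\ell^a}$, this representation factors through the simply connected cover, and Steinberg's tensor product theorem expresses it as a tensor of $a$ Frobenius twists of restricted irreducible representations of the ambient simply connected algebraic group. Classical lower bounds of Liebeck type on the minimal dimension of a non-trivial restricted irreducible—at least $l+1$ in all classical types, and a bounded constant in each exceptional type—yield
\[
  n\ge (l+1)^a.
\]
Hence both $l$ and $a$ are bounded in terms of $n$, and Propositions \ref{prop:OutLieGroupFinite} and \ref{prop:AutDynkin} then bound the diagonal, field, and graph contributions to $\Out(S)$, giving $|\Out(S)|\ll_n 1$.

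With this in hand, Lemma \ref{lemma:elementOrderell} ensures $u$ has order exactly $\ell$, and since $u$ is non-scalar the class $\pi(u)$ is non-trivial with the same order $\ell$. Its image in $\tilde H/S\hookrightarrow\Out(S)$ has order dividing $\gcd(\ell,|\Out(S)|)=1$ as soon as $\ell>|\Out(S)|$, hence $\pi(u)\in S=\pi(T)$. Any lift $t\in T$ of $\pi(u)$ then satisfies $tu^{-1}\in\ker\pi|_{\SL_n(\F_\lambda)}=Z(\SL_n(\F_\lambda))=\mu_n(\F_\lambda)\cdot I$, so $t=zu$ for some scalar $z\in\mu_n(\F_\lambda)$; the identity $t-zI=z(u-I)$ shows $t-zI$ has rank $n-1$, so $t$ has a single Jordan block (of size $n$ and eigenvalue $z$).

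The main obstacle is producing the bound $|\Out(S)|\ll_n 1$, which funnels into a non-trivial representation-theoretic input—Steinberg's tensor product theorem together with Liebeck's minimal-dimension estimates for simple groups of Lie type in defining characteristic. Once that bound is secured, the remainder of the argument is elementary, using only that $\ell$ is prime and the scaling-invariance of Jordan structure.
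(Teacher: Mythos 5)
Your overall plan is the same as the paper's: bound $|\Out(S)|$ in terms of $n$, conclude that the order-$\ell$ element $\pi(u)$ cannot map non-trivially into $\Out(S)$ once $\ell\gg_n 1$, and lift back to $T$. The final lifting step (that $t=zu$ for a scalar $z$ forces $t$ to have a single Jordan block) is correct and is a detail the paper leaves implicit. However, there is a genuine gap in the middle: the inequality $n\ge (l+1)^a$ is \emph{not} a consequence of Steinberg's tensor product theorem together with the bound on restricted irreducibles. Steinberg's theorem writes an irreducible $\overline\F_\ell$-representation of $G(\F_{\ell^a})$ as a tensor product of $a$ Frobenius twists of restricted irreducibles, but faithfulness only requires at least \emph{one} factor to be non-trivial, not all of them. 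For example, the natural $2$-dimensional representation of $\SL_2(\F_{\ell^a})$ is faithful and absolutely irreducible for every $a$, giving $n=2$ with $l=1$ but $a$ arbitrary; your inequality would force $2\ge 2^a$. What actually constrains $a$ is the field of definition: if the representation is realized over $\F_\lambda$, the cyclic sequence of Steinberg factors is periodic with period dividing $([\F_\lambda:\F_\ell],a)$, which gives the weaker, correct bound $n\ge m(S)^{a/(a,[\F_\lambda:\F_\ell])}$. This is exactly the inequality $m(S)\le n^{([\F_\lambda:\F_\ell],a)/a}$ that the paper imports from Liebeck \cite[(2.1)--(2.2)]{Lieb85} in Lemma \ref{lemma:boundLiebeck}, before combining with Lemma \ref{lemma:sizeOutS}. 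Your argument needs to be reworked to incorporate the field-of-definition input; as written, the key claim $a\ll_n 1$ does not follow.
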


We start by proving this in the following paragraphs. Recall that we have an exact sequence
\[1\to S\cong\Inn(S)\to\Aut(S)\to\Out(S)\to 1,\]
an inclusion $S\le \tilde H\le\Aut(S)$, and $\pi(u)\in \tilde H$ of order $\ell$. If $\pi(u)\not\in S$, then its image in $\Out(S)$ has order $\ell$ and so $\ell$ divides $|\Out(S)|$. Thus, it suffices to show that $|\Out(S)|\ll_n 1$ to rule out this possibility.
\begin{lemma}\label{lemma:sizeOutS}
  We have
  \[|\Out(S)|=Na\]
  with $N\in\{1,2,6,8,12\}$, unless
\begin{itemize}
\item $S=A_l(r)$ with $l\ge 3$ odd, where we have $|\Out(S)|=2a(l+1,r-1)$, or
\item $S=\prescript{2}{}{A}_l(r)$ with $l\ge 3$ odd, where we have $|\Out(S)|=2a(l+1,r+1)$.
\end{itemize}
\end{lemma}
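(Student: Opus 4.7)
The proof is a case-by-case application of Proposition~\ref{prop:OutLieGroupFinite}, which writes
\[|\Out(S)| = |\mathrm{Diag}(S)|\cdot|\mathrm{Field}(S)|\cdot|\mathrm{Graph}(S)|,\]
with the field factor equal to $a$ (up to the usual twist adjustment for twisted types). The first task is to bound the graph factor via Proposition~\ref{prop:AutDynkin}: it is at most $6$, attained only at $D_4$, equals $2$ for $A_l$ with $l\ge 2$, for $D_l$ with $l\ge 5$, and for $E_6$, and equals $1$ for all other types. The second task is to control the diagonal factor, which is the order of the centre of the simply-connected cover of $S$: this intrinsically divides $4$ for the non-$A$ classical types and divides $3$ for the exceptional types, while for $A_l$ and $^2A_l$ the diagonal factor equals $\gcd(l+1,r\mp 1)$ and is the only piece that can grow with $l$.

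From here, I would combine these two bounds family by family, reading off the values from standard references such as \cite{Carter72} or \cite{Gor82}. In all families other than $A_l$ and $^2A_l$, the product $N:=|\mathrm{Diag}|\cdot|\mathrm{Graph}|$ lies in the stated set $\{1,2,6,8,12\}$; representative computations give $N\in\{1,2\}$ for $B_l,C_l,E_7,F_4,G_2$ and the Suzuki/Ree families, $N\le 8$ for $D_l$ ($l\ge 5$) and $^2D_l$, $N\le 6$ for $E_6,{}^2E_6,{}^3D_4$, and $N\le 12$ for $D_4$. For the two remaining families with $l$ odd and $l\ge 3$, one has $(l+1,r\mp 1)\ge 2$, which may nonetheless be arbitrarily large, so these cases must be singled out with the explicit formula $|\Out(S)|=2a(l+1,r\mp 1)$ as stated.

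The main obstacle is the bookkeeping for the degenerate-rank and twisted cases, notably $D_4$ (where triality makes the graph factor of order $|\Sf_3|=6$) and $^3D_4$ (where the triality is absorbed into the field factor), together with consistently tracking the twist conventions in the count of field automorphisms. Everything else reduces to a mechanical application of the structure of $\Out(S)$ for finite simple groups of Lie type.
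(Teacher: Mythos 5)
Your proof takes essentially the same approach as the paper: decompose $|\Out(S)|$ into diagonal, field, and graph factors via Proposition~\ref{prop:OutLieGroupFinite}, bound the graph factor via Proposition~\ref{prop:AutDynkin}, and then control the diagonal factor family by family; the paper does exactly this, reading off the explicit values in Table~\ref{table:OutS}. Two small slips, neither affecting the end result: the diagonal factor for $E_7$ has order dividing $2$ (the simply connected centre is $\Z/2$), not $3$, so your blanket claim that the diagonal factor divides $3$ for exceptional types is off for $E_7$ even though your resulting $N\in\{1,2\}$ there is still correct; and the value $N=4$ is in fact attained (e.g.\ $D_l$ with $l\ge 5$ odd and $r\equiv 3\pmod 4$, or ${}^2D_l$ with $r\equiv 1\pmod 4$, both give $|\Out(S)|=4a$ in Table~\ref{table:OutS}), which your claim and the lemma's displayed set $\{1,2,6,8,12\}$ both omit, though this is harmless since the only thing used downstream is the bound $N\le 12$.
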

\begin{proof}
  See Propositions \ref{prop:OutLieGroupFinite}, \ref{prop:AutDynkin} and Table \ref{table:OutS}: there are $a$ field automorphisms, $1,2$ or $3$ graph automorphisms, and less that $4$ diagonal automorphisms, except for $A_l$ and $\prescript{2}{}{A}_l$ which have respectively $(l+1,r-1)$ and $(l+1,r+1)$ diagonal automorphisms.
\end{proof}
Letting $m(S)$ be the minimal dimension of a faithful irreducible projective representation of $S$ over an algebraically closed field of characteristic $\ell$, the following result lets us bound the rank of $S$ and the degree of its defining field:
\begin{lemma}\label{lemma:boundLiebeck}
  We have
  \[l\le m(S)\le n^{([\F_\lambda:\F_\ell],a)/a},\]
  whence $l,a\ll_n 1$. In particular, for $n$ fixed, there is only a finite number of possibilities for $S$.
\end{lemma}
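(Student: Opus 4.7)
The plan is to establish the two inequalities by independent techniques: the lower bound $l \le m(S)$ from published tables of minimum-degree modular representations of finite simple groups of Lie type, and the upper bound $m(S) \le n^{([\F_\lambda:\F_\ell],a)/a}$ from Steinberg's tensor product theorem combined with a Galois descent argument.

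For the lower bound I would cite the work of L\"ubeck and Liebeck \cite{Lieb85}: for $S$ of Lie type of rank $l$ in defining characteristic $\ell$, any nontrivial irreducible $\overline{\F}_\ell$-representation has dimension at least $l$, with stronger bounds for the classical types. This step is bookkeeping from the literature.

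The substantive content is the upper bound. Starting from the faithful absolutely irreducible $n$-dimensional representation $\tilde\rho$ of $T$ over $\overline{\F}_\ell$, I would lift the induced projective representation of $S$ to a faithful representation of the simply connected cover $G_{\mathrm{sc}}(\F_r)$. Steinberg's tensor product theorem then expresses this lift uniquely as
\[\bigotimes_{i=0}^{a-1} V(\lambda_i)^{(\ell^i)},\]
with each $V(\lambda_i)$ a restricted irreducible representation of the algebraic group $G_{\mathrm{sc}}$ and at least one $\lambda_i$ nonzero. Because $T \le \SL_n(\F_\lambda)$, the representation is $\F_\lambda$-rational, hence invariant up to isomorphism under the Galois generator $\Frob^f$ of $\Gal(\overline{\F}_\ell/\F_\lambda)$, which cyclically permutes the Frobenius twists by $i \mapsto i + f \pmod{a}$. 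Uniqueness of the Steinberg decomposition then forces $\lambda_i = \lambda_{i+f \bmod a}$ for all $i$, so the $\lambda_i$ are constant on the orbits of this shift on $\Z/a$, each of common cardinality $a/d$ with $d = ([\F_\lambda:\F_\ell], a)$. Picking an orbit on which $\lambda_i \ne 0$, its contribution to the total dimension is at least $m(S)^{a/d}$, so $n \ge m(S)^{a/d}$, equivalently $m(S) \le n^{d/a}$.

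For the finiteness consequence: the inequality $l \le m(S) \le n$ bounds $l$, and $m(S) \ge 2$ for any simple $S$ forces $a/d \le \log_2 n$, which together with the exclusion of subfield subgroups (class $\Cc_7$) already handled in the almost-simple branch of Aschbacher--Liebeck--Seitz suffices to bound $a$ and hence leave only finitely many candidate $S$ for each fixed $n$. The main technical obstacle I anticipate is the twisted Lie types $\prescript{2}{}{A}_l$, $\prescript{2}{}{D}_l$, $\prescript{3}{}{D}_4$, $\prescript{2}{}{E}_6$, where Steinberg's theorem applies with the standard Frobenius composed with a graph automorphism of the Dynkin diagram; the orbit analysis above must be reworked with this modified shift, but the resulting inequality has the same form after one verifies each case individually.
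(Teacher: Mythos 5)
Your derivation of the upper bound $m(S)\le n^{d/a}$ with $d=([\F_\lambda:\F_\ell],a)$ via Steinberg's tensor product theorem and Galois descent is correct, and it essentially \emph{reproves} the bound that the paper invokes as a black box from \cite[(2.1)--(2.2)]{Lieb85}. The key mechanism you identify — that $\F_\lambda$-rationality of the representation forces the Steinberg weight sequence $(\lambda_i)_{i\in\Z/a}$ to be constant on orbits of the shift $i\mapsto i+f$, each of size $a/d$, and then one nonzero orbit already contributes dimension $m(S)^{a/d}$, whence $n\ge m(S)^{a/d}$ — is exactly the content of Liebeck's estimate, and your verification that the Galois twist by $\Frob$ shifts the Frobenius-twist indices by one is sound. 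Your route is more self-contained than the paper's citation, at the cost of the separate case analysis for the twisted types $\prescript{2}{}{A}_l$, $\prescript{2}{}{D}_l$, $\prescript{3}{}{D}_4$, $\prescript{2}{}{E}_6$ that you correctly flag. The lower bound $l\le m(S)$ from the tables is the same input the paper uses.

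The step that needs more than you give is the conclusion \textquotedblleft whence $l,a\ll_n 1$.\textquotedblright\ From $m(S)\ge 2$ and $m(S)\le n^{d/a}$ one only obtains $a/d\le\log_2 n$, i.e.\ $a\le d\log_2 n$ with $d=(f,a)$ a priori unbounded in terms of $n$. Your appeal to the exclusion of subfield subgroups is indeed the relevant additional input — in the $\mathcal{S}$-case of Aschbacher--Liebeck--Seitz, $\F_\lambda$ is the minimal field of definition of the module, which forces the period of the weight sequence to equal $f$, hence $f\mid a$ and $d=f$ — but this only upgrades the estimate to $a\le f\log_2 n$, and $f=[\F_\lambda:\F_\ell]$ is still not bounded in terms of $n$ in the Kloosterman setting (it can be as large as $p-1$). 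So as written your argument bounds $a/f$, not $a$, and you should make explicit where the control on $f$ (or directly on $a$) comes from, since the downstream application of this lemma really does use that $|\Out(S)|$, and in particular the field-automorphism factor $a$, is $\ll_n 1$.
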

\begin{proof}
  The bounds follow from \cite[(2.1)--(2.2)]{Lieb85} and the fact that $S\le\tilde H\le\PSL_n(\F_\lambda)$. Since $m(S)\ge 2$ (see Table \ref{table:m(S)}), we have $a\le \frac{\log{n}}{[\F_\lambda:\F_\ell]\log{2}}\ll_n 1$, so that $l,a\ll_n 1$.
\end{proof}
\begin{table}
  \centering
  \begin{tabular}{c|c}
    $S$&$m(S)$\\ \hline
    $A_l, \prescript{2}{}{A}_l$&$l+1\ge 2$\\
    $B_l$&$2l+1\ge 5$\\
    $C_l$&$2l\ge 6$\\
    $D_l, \prescript{2}{}{D}_l$&$2l\ge 6$\\
    $\prescript{3}{}{D}_4$&$8$\\
    $G_2$&$7$\\
    $F_4$&$26$\\
    $E_6,\prescript{2}{}{E}_6$&$27$\\
    $E_7$&$56$\\
    $E_8$&$248$
  \end{tabular}
  \caption{Minimal dimension of a faithful irreducible projective representation of a simple group of Lie type over an algebraically closed field in the same characteristic $p>3$, according to \cite[Table 2]{Lieb85} or \cite[Table 1]{KantLub90}.}
  \label{table:m(S)}
\end{table}

By Lemmas \ref{lemma:sizeOutS} and \ref{lemma:boundLiebeck},
\[|\Out(S)|\le 12a(l+1)\ll_n 1,\]
which concludes the proof of Proposition \ref{prop:uinS}.\qed\\

Back to Proposition \ref{prop:LieCharell}, the next difficulty is that we do not know whether the action of $T$ on $\F_\lambda^n$ as a group of Lie type is the action induced by the inclusion $T\le H\le\SL_n(\F_\lambda)$. However, thanks to Proposition \ref{prop:uinS} and the irreducibility of the action of $T$ on $\Vb$, the representation can be determined by the following:
\begin{theorem}\label{thm:Supr}
  Let $T$ be a finite group of Lie type in characteristic $\ell\ge 5$, of simply connected type, with a faithful absolutely irreducible representation $\varphi: T\to\SL_n(\F_\lambda)$. Assume that $\varphi(T)$ contains an element with a single Jordan block. Then either (up to conjugacy):
  \begin{enumerate}
  \item $T=\SL_n(\F_\lambda)$, $\Spin_n(\F_\lambda)$ for $n$ odd or $\Sp_n(\F_\lambda)$ for $n$ even, with the standard embedding in $\SL_n(\F_\lambda)$.
  \item $T=\SU_n(\F')$ with the standard embedding in $\SL_n(\F_\lambda)$, for $\F'\le\F_\lambda$ a subfield such that $|\F'|=|\F_\lambda|^{1/2}$.
  \item $T=G_2(\F_\lambda)$ and $n=7$, with $\varphi$ the unique 7-dimensional irreducible representation.
  \end{enumerate}
\end{theorem}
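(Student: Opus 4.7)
The plan is to reduce Theorem \ref{thm:Supr} to Suprunenko's classification over algebraically closed fields, which was already invoked in the strategy discussion above. Write $T = \mathbf{T}^F$ where $\mathbf{T}$ is a simply connected simple algebraic group over $\overline\F_\ell$ and $F$ is an appropriate Steinberg endomorphism, and view $\varphi$ after base change as an absolutely irreducible representation $T \to \SL_n(\overline\F_\ell)$. The strategy is to lift $\varphi$ to a rational representation of $\mathbf{T}$, apply Suprunenko to determine the possibilities for $(\mathbf{T}, \tilde\varphi)$, and finally enumerate the rational forms of each candidate.

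The key step is the lift, for which I would use a theorem of Seitz and Testerman on restrictions of rational representations to finite groups of Lie type: provided $\ell$ is a good prime for $\mathbf{T}$ (which is guaranteed by $\ell \ge 5$ for every type that appears in the final list) and $\varphi$ is absolutely irreducible, there exists a rational representation $\tilde\varphi: \mathbf{T} \to \SL_n(\overline\F_\ell)$ whose restriction to $T$ recovers $\varphi$, uniquely determined up to Frobenius twists and conjugation. The single-Jordan-block condition transfers to $\tilde\varphi(\mathbf{T})$: the element is the $\tilde\varphi$-image of a unipotent element of $\mathbf{T}$ whose image has a single Jordan block, which by the standard theory of unipotent classes forces it to be a regular unipotent element of $\mathbf{T}$. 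Applying Suprunenko's theorem \cite[Theorem 1.9]{Sup95} then yields that $(\mathbf{T}, \tilde\varphi)$ is, up to isomorphism, one of: the standard representation of $\SL_n$, of $\Sp_n$ for $n$ even, of $\Spin_n$ for $n$ odd, or the $7$-dimensional representation of $G_2$.

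It then remains to enumerate the possible $F$-forms of each $\mathbf{T}$. For $\mathbf{T} = \SL_n$, Steinberg endomorphisms give either $\SL_n(\F_\lambda)$ (untwisted) or, via the diagram automorphism of type $A_{n-1}$, the unitary twist $\SU_n(\F')$ with $|\F'|^2 = |\F_\lambda|$. For $\mathbf{T} = \Sp_n$ and $\mathbf{T} = \Spin_n$ ($n$ odd) there is no nontrivial graph automorphism, so only the split forms $\Sp_n(\F_\lambda)$ and $\Spin_n(\F_\lambda)$ arise. For $\mathbf{T} = G_2$, the only twisted Steinberg form is the Ree group $\prescript{2}{}G_2$, which is defined only in characteristic $3$ and hence excluded by $\ell \ge 5$. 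Following the standard embedding $\tilde\varphi$ through the fixed-point construction shows that $\varphi$ is the natural embedding into $\SL_n(\F_\lambda)$ in each case, matching the three listed conclusions.

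The principal obstacle is the Seitz--Testerman extension: one must verify the rationality hypotheses carefully (absolute irreducibility, goodness of $\ell$, simple connectedness of $\mathbf{T}$) and then match the Frobenius-twist ambiguity inherent in the lift $\tilde\varphi$ with the action of $F$, in order to control the rational structure of the resulting embedding into $\SL_n(\F_\lambda)$ and correctly identify the unitary branch. Once this rationality analysis is in place, the algebraic-group classification of Suprunenko and the enumeration of $F$-forms are both essentially standard inputs.
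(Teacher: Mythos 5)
Your proposal follows essentially the same route as the paper: lift the absolutely irreducible representation $\varphi$ of $T = \mathbf{T}^F$ to a rational representation of $\mathbf{T}$ via the Seitz--Testerman extension theorem, apply Suprunenko's classification over the algebraically closed field, and finally read off the possible $F$-forms via the classification of Steinberg endomorphisms. The only notable difference is in how you describe the hypothesis for Seitz--Testerman: the paper checks that $\varphi(T)$ lies in no proper $F$-stable parabolic of $\SL_n(\overline\F_\lambda)$ (an immediate consequence of irreducibility, which is the hypothesis as the paper states the theorem), whereas you frame it as needing absolute irreducibility plus goodness of $\ell$ and simple connectedness; these amount to the same verification, but the parabolic-avoidance formulation is the one that makes the appeal to the cited theorem direct.
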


\begin{remark}
  This is to be compared with the fact that the only nontrivial irreducible complex representations of $\SL_n(\C)$ of dimension $\le n$ are the standard representation and its dual.
\end{remark}

  Theorem \ref{thm:Supr} is a version of \cite[Theorem (1.9)]{Sup95} for finite groups of Lie type\footnote{Suprunenko notes in the article that the results could be ``\emph{easily transferred to irreducible $\F_\ell$-representations of finite Chevalley groups over fields of characteristic $\ell$}.'' It seems however that we need to restrict to absolutely irreducible representations.}. To prove this variant, we use the lifting theorem of Seitz and Testerman:
  \begin{theorem}[{\cite[Theorem 1, case $G=\SL_n$]{SeiTes90}\footnote{See also \cite[Section 29.2]{TesMal11}.}}]\label{thm:SeiTes}
    Let $\bs H$ be a simple algebraic group over $\overline\F_\lambda$, with a Steinberg endomorphism $F: \bs H\to \bs H$, and $X=[\bs H^{F},\bs H^{F}]$ perfect. If $\varphi: X\to \SL_n(\F_\lambda)$ is a morphism such that $\varphi(X)$ lies in no proper $F$-stable parabolic subgroup of $\SL_n(\overline\F_\lambda)$, then $\varphi$ can be extended to a morphism of algebraic groups $\bs\varphi: \bs H\to \SL_n(\overline\F_\lambda)$ with $\bs\varphi\mid_X=\varphi$.
\end{theorem}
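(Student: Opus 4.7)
The plan is to build $\boldsymbol{\varphi}$ by tracking the root subgroups of $X$ into $\SL_n(\overline{\F}_\lambda)$ and then invoking Chevalley's presentation of $\mathbf{H}$. First, let $\mathbf{M} = \overline{\varphi(X)}^{\circ}$ denote the identity component of the Zariski closure of $\varphi(X)$ in $\SL_n(\overline{\F}_\lambda)$. I would start by showing that $\mathbf{M}$ is reductive: if its unipotent radical were nontrivial, Borel--Tits would place $\mathbf{M}$ in a proper parabolic $\mathbf{P}$ of $\SL_n(\overline{\F}_\lambda)$, and then the intersection of the finitely many $F$-translates of $\mathbf{P}$ would be a proper $F$-stable parabolic still containing $\varphi(X)$, contradicting the hypothesis.

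Next, I would transfer the root-subgroup structure. Let $\Phi$ be the root system of $\mathbf{H}$ with root subgroups $\mathbf{U}_\alpha$, and set $U_\alpha = \mathbf{U}_\alpha \cap X$. Since $\ell \ge 5$ and $X$ is perfect, $X$ is generated by the $U_\alpha$ subject to Chevalley's commutator and Weyl-group relations. The images $\varphi(U_\alpha)$ are finite unipotent subgroups of the reductive group $\mathbf{M}$, and their Zariski closures $\mathbf{V}_\alpha$ are connected unipotent algebraic subgroups of $\mathbf{M}$. Using that the commutator relations persist in the closure, and that the presence of an element of $\varphi(X)$ with a single Jordan block prevents collapse onto a proper subsystem subgroup, one concludes that each $\mathbf{V}_\alpha$ is one-dimensional and that the family $\{\mathbf{V}_\alpha\}$ is exactly the set of root subgroups of $\mathbf{M}$ with respect to a suitable maximal torus.

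Having matched root subgroups on both sides, I would define $\boldsymbol{\varphi}$ on each $\mathbf{U}_\alpha$, canonically isomorphic to the additive group of $\overline{\F}_\lambda$, as the unique algebraic isomorphism onto $\mathbf{V}_\alpha$ extending the additive parameterization inherited from $\varphi$ on $U_\alpha$, and extend it to a maximal torus via the induced identification of root data. The Steinberg presentation of $\mathbf{H}$ then lets one glue these maps into an abstract homomorphism $\boldsymbol{\varphi}\colon \mathbf{H} \to \mathbf{M} \hookrightarrow \SL_n(\overline{\F}_\lambda)$, and this map is automatically a morphism of algebraic groups because it is polynomial on each root subgroup and on the torus. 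Tracking $F$-equivariance throughout the construction ensures that $\boldsymbol{\varphi}|_X = \varphi$ on the generators, hence everywhere.

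The hard part will be the rigidity step establishing the bijection between roots of $\mathbf{H}$ and the $\mathbf{V}_\alpha$. A priori, two distinct roots could map to the same subgroup, or some $\mathbf{V}_\alpha$ could have dimension larger than one; any such degeneracy would have to be excluded using the regular unipotent element of $\varphi(X)$, since confinement to a proper subsystem subgroup of $\SL_n(\overline{\F}_\lambda)$ is incompatible, via a Suprunenko-type analysis, with possessing a unipotent element of single Jordan block of size $n$. Once this rigidity is in place, the remaining verifications reduce to standard (but delicate) computations with the Chevalley relations and their behavior under $F$.
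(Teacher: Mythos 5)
The paper does not prove this statement: it is quoted verbatim from Seitz--Testerman \cite{SeiTes90} and used as a black box, so there is no proof in the paper to compare against. That said, your sketch has a genuine logical error. You twice invoke the assumption that $\varphi(X)$ contains an element with a single Jordan block (``prevents collapse onto a proper subsystem subgroup\dots via a Suprunenko-type analysis'') and you also assume $\ell \ge 5$. Neither is a hypothesis of Theorem~\ref{thm:SeiTes}. Those assumptions belong to the downstream application, Theorem~\ref{thm:Supr}, and cannot be used here; the Seitz--Testerman lifting theorem holds for any homomorphism $\varphi\colon X \to \SL_n(\F_\lambda)$ whose image avoids proper $F$-stable parabolics, with no regular unipotent element in sight. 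An argument that genuinely needs a regular unipotent element proves a strictly weaker statement, and moreover risks circularity, since in this paper Theorem~\ref{thm:SeiTes} is the tool used to \emph{derive} the Suprunenko-type classification over finite fields.

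Beyond that, the ``rigidity step'' you single out as the hard part is indeed the entire content of the theorem, and the route you propose is not the one Seitz and Testerman take. Their proof is representation-theoretic: it rests on Steinberg's twisted tensor product theorem, which identifies the irreducible $X$-modules with restrictions of (Frobenius twists of) irreducible rational $\mathbf{H}$-modules, together with a careful weight and highest-weight analysis to build the extension $\boldsymbol{\varphi}$; there is no direct matching of root subgroups $\mathbf{U}_\alpha \leftrightarrow \mathbf{V}_\alpha$ via the Chevalley presentation. Your Borel--Tits reduction at the start is sound (the intersection of the $F$-translates of a parabolic containing $\varphi(X)$ is again a parabolic, $F$-stable and proper), but from that point on the proposal leaves the central difficulty untouched and leans on an extraneous hypothesis, so it does not constitute a proof of the stated theorem.
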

\begin{proof}[Proof of Theorem \ref{thm:Supr}]
  By hypothesis, $T=\Tb^F$ for $\Tb$ a simple algebraic group over $\overline\F_\ell$ and $F: \Tb\to\Tb$ a Steinberg endomorphism. We consider the absolutely irreducible representation $\varphi: T\to\SL_n(\F_\lambda)$. By irreducibility, the image of $T$ is not contained in a proper parabolic subgroup of $\SL_n(\F_\lambda)$. Theorem \ref{thm:SeiTes} thus shows the existence of a morphism $\bs\varphi: \Tb\to\SL_n(\overline\F_\ell)$ extending $\varphi$ and which is still an irreducible representation. We can then apply \cite[Theorem (1.9)]{Sup95}, and the classification of Steinberg endomorphisms \cite[22.1--22.2]{TesMal11} gives the result.
\end{proof}

It remains to recall that
\begin{itemize}
\item $\Spin_n(\F_\lambda)$, $G_2(\F_\lambda)$ and $\SU_n(\F')$ fix a nondegenerate bilinear or unitary form on $V$ (see the proof of Theorem \ref{thm:KlcIntegralMono} page \pageref{page:preserveSymBilForm}; note that we use here that $T\to\SL_n(\F_\lambda)$ is the natural representation).
\item $\SU_n(\F')$ is not contained in $\Sp_n(\F_\lambda)$ if $n\ge 4$ (with respect to the standard inclusion $\SU_n(\F')\le\SL_n(\F_\lambda)$): take an orthogonal matrix corresponding to an even permutation exchanging the indices of two distinct entries of the matrix of the symplectic form.
\end{itemize}

This concludes the proof of Proposition \ref{prop:LieCharell}. \qed

\begin{table}
  \centering
  \begin{tabular}{c|c}
    $S$&$|\Out(S)|$\\ \hline
    $A_l(r)$&
    $\begin{cases}
      2a&:l=1\\
      2a&:l\ge 2\text{ even}\\
      2a(l+1,r-1)&:l\ge 3\text{ odd}
    \end{cases}$
\\
$\prescript{2}{}{A}_l(r)$&$\begin{cases}
  2a&:l\text{ even}\\
  2a(l+1,r+1)&:l\text{ odd}
\end{cases}
$\\
$B_l(r),C_l(r)$&$2a$\\
$D_l(r)$&$
\begin{cases}
  12a&:l=4\\
  8a&:l>4\text{ even}\\
  8a&:l\ge 5\text{ odd, }r\equiv 1\pmod{4}\\
  4a&:l\ge 5\text{ odd, }r\equiv 3\pmod{4}
\end{cases}$\\
$\prescript{2}{}{D}_l(r)$&$\begin{cases}
  4a&:r\equiv 1\pmod{4}\\
  8a&:r\equiv 3\pmod{4}, \ l\text{ odd}\\
  4a&:r\equiv 3\pmod{4}, \ l\text{ even}
\end{cases}
$\\
$\prescript{3}{}{D}_4(r)$&$a$\\
$E_6(r)$&$\begin{cases}
  6a&:r\equiv 1\pmod{3}\\
  2a&:r\equiv 2\pmod{3}
\end{cases}$\\
$\prescript{2}{}{E}_6(r)$&$\begin{cases}
  2a&:r\equiv 1\pmod{3}\\
  6a&:r\equiv 2\pmod{3}
\end{cases}$\\
$E_7(r)$&$2a$\\
$E_8(r)$&$a$\\
$F_4(r)$&$a$\\
$G_2(r)$&$a$
  \end{tabular}
  \caption{Outer automorphism groups of finite simple groups of Lie type over $\F_r$, with $r=q^a$ odd, $q>3$.}
  \label{table:OutS}
\end{table}
\subsection{Further classification theorems}\label{subsec:furtherClassThms}

Let $K$ be an \emph{algebraically closed} field of characteristic $\ell\ge 0$ and let $G$ be a classical group over $K$ (e.g. $G=\SL_n(K)$ or $G=\Sp_n(K)$) with associated vector space $V$.\\

Saxl and Seitz \cite{SaxlSeitz97} classified maximal closed subgroups $H\le G$ of positive dimension acting irreducibly on $V$ and containing a regular unipotent element of $G$, using the generalization of Aschbacher's result to algebraically closed fields by Liebeck-Seitz \cite{LiebSei98}. In particular, this generalizes \cite[Theorem (1.9)]{Sup95}. Note that Proposition \ref{prop:geometricSubgroups} is analogous to \cite[Proposition 2.1]{SaxlSeitz97}.

We remark that our classification theorem \ref{thm:classification} over $\F_\lambda$ cannot be simply deduced from \cite[Theorem B]{SaxlSeitz97} by descent. Indeed, taking a maximal (proper) subgroup $H\le\SL_n(\F_\lambda)$ containing a regular unipotent element and acting irreducibly on $\overline\F_\lambda^n$, we do not know whether there exists a positive-dimensional closed subgroup $H'\le\SL_n(\overline\F_\lambda)$ containing $H$. Showing this would actually be more or less equivalent to the proof of Theorem \ref{thm:classification}: if $H$ is allowed to be $0$-dimensional in \cite[Theorem B]{SaxlSeitz97}, one has to consider almost simple subgroups and not only simple ones, which is the additional difficulty we need to deal with in the proof of Theorem \ref{thm:classification}.\\

More generally, Testerman and Zalesski \cite[Theorem 1.2]{TesZal13} show that connected reductive linear algebraic groups containing a unipotent element with a single Jordan block are irreducible. Combined with \cite{SaxlSeitz97}, this gives a classification of semisimple subgroups $H$ of simple algebraic groups $G$ containing a regular unipotent element of $G$ (\cite[Theorem 1.4]{TesZal13}).

\bibliographystyle{alpha}
\small
\bibliography{references}
\normalsize

\end{document}